\newcommand{\mres}{\mathbin{\vrule height 1.6ex depth 0pt width 0.13ex\vrule height 0.13ex depth 0pt width 0.8ex}}
\theoremstyle{plain}
\newtheorem{thm}{Theorem}[section]
\newtheorem{prop}[thm]{Proposition}
\newtheorem{cor}[thm]{Corollary}
\newtheorem{lem}[thm]{Lemma}
\newtheorem*{thm*}{Theorem}
\newtheorem*{prop*}{Proposition}
\newtheorem*{cor*}{Corollary}
\newtheorem*{lem*}{Lemma}
\theoremstyle{definition}
\newtheorem{defi}[thm]{Definition}
\newtheorem*{defi*}{Definition}
\newtheorem*{exa*}{Example}
\theoremstyle{remark}
\newtheorem{rmk}[thm]{Remark}
\newtheorem*{rmk*}{Remark}
\newcommand{\R}{\mathbf{R}}
\newcommand{\HH}{\mathcal{H}}
\newcommand{\C}{\mathbb{C}}
\newcommand{\CC}[1]{\C #1:#1}
\title{Optimal regularity for quasiminimal sets of codimension one in $\R^2$ and $\R^3$}
\author{C. Labourie, Y. Teplitskaya}
\date{}
\begin{document}

\maketitle

\abstract{
    Quasiminimal sets are sets for which a pertubation can decrease the area but only in a controlled manner. We prove that in dimensions $2$ and $3$, such sets separate a locally finite family of local John domains. Reciprocally, we show that this property is a sufficient for quasiminimality. In addition, we show that quasiminimal sets locally separate the space in two components, except at isolated points in $\R^2$ or out a of subset of dimension strictly less than $N-1$ in $\R^N$.
}

\textbf{Mathematics Subject Classifications}: 49K99, 49Q20.

\textbf{Keywords}: Quasiminimal sets, Plateau problem, local finiteness, John domains.

\tableofcontents

\section{Introduction}

Minimal sets are a central focus in classical geometric measure theory and in variational problems involving a surface term. Inspired by soap films, their area is minimal compared to admissible pertubations. In contrast, the area of quasiminimal sets can decrease but only to a limited extent. This notion allows to represent sets minimizing inhomogeneous and possibly highly irregular energies. Building on Almgren’s foundational work \cite{Alm76}, subsequent research by David and Semmes established uniform bounds for their geometric structure, such as uniform rectifiability \cite{DS20}. This paper is motivated in particular by \cite{DS98}, where David and Semmes characterize domains with a quasiminal boundary as bi-John domains with an Ahlfors-regular boundary. We prove an analogue optimal regularity theorem for quasiminimal sets in $\R^2$ and $\R^3$.

The primary difference between the setting of \cite{DS98} and ours is that the boundary of a connected domain $W$ separates the space in two regions: $W$ and $\R^N \setminus W$, whereas a quasiminimal set may separate the space into multiple, potentially infinitely many, components.
In \cite{DP22}, David and Pourmohammad extended the techniques of \cite{DS98} to the case of a \emph{finite} Caccioppoli partition minimizing a general energy. They proved that such a partition is composed of local John domains but with constants depending on the number of partition elements. Therefore, a central novelty and challenge in our work is to show that quasiminimal sets locally separates a finite number of components, with a controlled upper bound on the number of these components.

The local finiteness of minimal partitions was previously studied by Tamanini, Massari, Congedo and Leonardi in a serie of works \cite{Tamanini91, Tamanini93, Tamanini96, Tamanini98}. Their techniques rely however on arguments specific to minimal sets, such as the fact that their blow-up limits are cones. Unlike minimal sets, quasiminimal sets lack Euler-Lagrange equations, monotonicity formulas, $\varepsilon$-regularity theorems and their blow-up limits are not cones in general. This necessitates new techniques which are robust enough to apply to a broader setting.

In Section \ref{section_standard}, we provide simple proofs for standard regularity properties of quasiminimal sets such as Ahlfors-regularity and uniform rectifiability. In Section \ref{section_generic}, we show that almost-every point of a quasiminimal set is an ``interface point'' where two components meet. We establish our main results in Section \ref{section_finiteness} and Section \ref{section_john}, where we show that quasiminimal sets partition a domain into a locally finite family of local John domains. Reciprocally, we justify in Section \ref{section_sufficient} that this is a sufficient condition for quasiminimality. We finally investigate ``junction points'' where multiple components meet in Section \ref{section_dimension} and show that they are isolated in the plane and have a dimension $< N-1$ in $\R^N$. These results not only extend the theoretical understanding of quasiminimal sets but also offer new perspectives for applying these sets in complex geometric and variational contexts, such as image segmentation and fracture mechanics.

Our local finiteness theorem applies to the dimensions $N = 2$ and $3$ and it remains an open question whether quasiminimal sets locally separate a finite number of components in higher dimension.

\section{Definitions}

For the whole paper, we fix an open subset $\Omega$ of $\R^N$, where $N \geq 2$. Given a set $A$, the notation $A \subset \subset \Omega$ stands for $\overline{A} \subset \Omega$.
A \emph{coral set} $K \subset \Omega$ is a relatively closed subset of $\Omega$ such that for all $x \in K$ and for all $r > 0$, we have $$\HH^{N-1}(K \cap B(x,r)) > 0.$$
We now turn our attention to the definition of admissible competitors. There exists different notions of competitors in the literature but in general a competitor $F$ of a set $K$ in a ball $B$ should satisfy $F \setminus B = K \setminus B$ and $F$ should «span $K \cap \partial B$» in the same way as $K$ does. A typical class of competitors, introduced by Almgren \cite{Alm76}, are the images $f(K)$ of $K$ under a Lipschitz deformation $f : K \to \R^N$ such that $f = \mathrm{id}$ in $K \setminus B$ and $f(K \cap B) \subset B$. In this paper, we work with a class of competitor which is more convenient to deal with, called \emph{topological competitors}. This notion was introduced by Bonnet \cite{Bonnet} in the context of image segmentation.

A \emph{topological competitor} of $K$ in a ball $B(x_0,r) \subset \subset \Omega$ is a relatively closed subset $F \subset \Omega$ such that $F \setminus B(x_0,r) = K \setminus B(x_0,r)$ and
\begin{equation}\label{eq_topological}
    \begin{gathered}
        \text{for all points $x,y \in \Omega \setminus \left(B(x_0,r) \cup K\right)$,}\\
        \text{if $x$,$y$ are separated by $K$, then they are separated by $F$.}
    \end{gathered}
\end{equation}
This means that if $x$, $y$ lie in distinct connected components of $\Omega \setminus K$, they also lie in distinct connected components of $\Omega \setminus F$.
\begin{figure}[ht]
\begin{center}
\includegraphics[width=0.22\linewidth]{./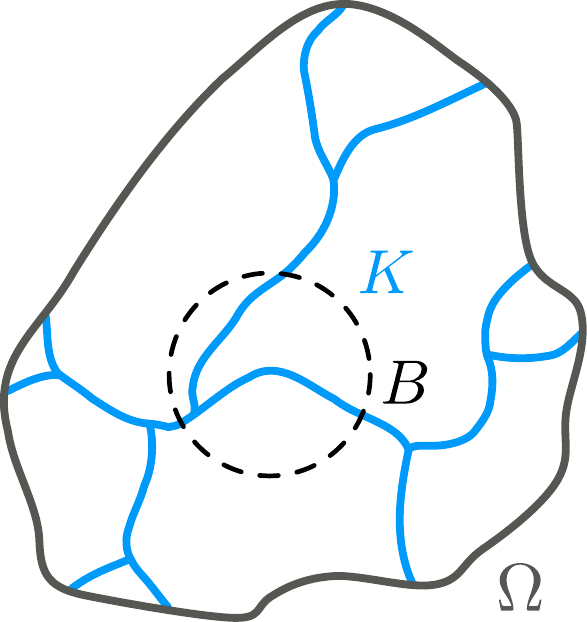}
\quad \quad
\includegraphics[width=0.22\linewidth]{./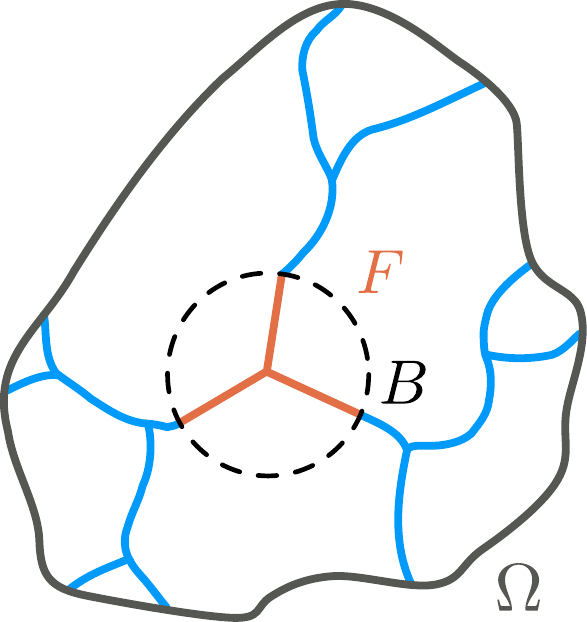}
\end{center}
\caption{A topological competitor of $K$ in a ball $B$.}
\end{figure}

\begin{defi}
    Let $M \geq 1$.
    A $M$-quasiminimal set is a coral set $K \subset \Omega$ which is $\HH^{N-1}$-locally finite in $\Omega$ and such that for all $x_0 \in K$, $r > 0$ such that $B(x_0,r) \subset \subset \Omega$ and for all topological competitor $F$ of $K$ in $B(x_0,r)$, we have
    \begin{equation}\label{eq_quasi}
        \HH^{N-1}(K \setminus F) \leq M \HH^{N-1}(F \setminus K).
    \end{equation}
\end{defi}
In the case $M = 1$, such a set is called a \emph{minimal set} (there are also different notions of minimal sets in the literature, for instance \cite{Alm76, Reifenberg, DS20, HaPu}).
\begin{rmk}
Notice that (\ref{eq_quasi}) implies a weaker quasiminimality property, namely
\begin{equation}\label{eq_quasi2}
    \HH^{N-1}(K \cap B) \leq M \HH^{N-1}(F \cap B).
\end{equation}
Property (\ref{eq_quasi2}) will be sufficient for some of our first results such as Ahlfors-regularity but not for the rest of the paper.
\end{rmk}

\begin{rmk}[Motivation]
    Quasiminimal sets represent sets minimizing Borel regular measures $\mu$ in $\Omega$ such that
    \begin{equation}\label{eq_mu}
        \lambda \HH^{N-1} \leq \mu \leq \Lambda \HH^{N-1}
    \end{equation}
    for some constants $0 < \lambda \leq \Lambda$. Let us justify this claim.
    Let $K \subset \Omega$ be a local minimizer of $\mu$, that is, a coral $\HH^{N-1}$-locally finite set in $\Omega$ such that for all $x_0 \in K$, $r > 0$ with $B(x_0,r) \subset \subset \Omega$ and for all topological competitor $F$ of $K$ in $B(x_0,r)$, we have
    \begin{equation*}
        \mu(K \cap B(x_0,r)) \leq \mu(F \cap B(x_0,r)).
    \end{equation*}
    It follows that
    \begin{equation*}
        \mu(K \setminus F) \leq \mu(F \setminus K)
    \end{equation*}
    and then by (\ref{eq_mu}) that
    \begin{equation}\label{eq_J}
        \HH^{N-1}(K \setminus F) \leq M \HH^{N-1}(F \setminus K), \quad \text{where $M = \Lambda/\lambda \geq 1$.}
    \end{equation}
    Reciprocally (\ref{eq_J}) directly translates as
    \begin{equation*}
        \mu(K \cap B(x_0,r)) \leq \mu(F \cap B(x_0,r)),
    \end{equation*}
    where $\mu = \HH^{N-1} \mres K + M \HH^{N-1} \mres (\R^N \setminus K)$.
\end{rmk}

\begin{rmk}[Quasiminimal sets in fracture mechanics]
    Another motivation for the study of quasiminimal sets is their application to the theory of brittle fractures in linear elasticity.
    Let $\Omega$ be a bounded open set of $\R^N$ representing a homogeneous isotropic brittle solid.
    If a loading is applied on $\partial \Omega$, the solid will deform and absorb energy. But if this exceeds the material's limit, the solid will releave (totally or partially) this energy by making a crack.
    Based on the pioneering work of Griffith in the 1920's, Francfort and Marigo \cite{FM} formulated the equilibrium state of the fracture as the minimization of the Griffith functional
    \begin{equation*}
        \int_{\Omega \setminus K} \CC{e(u)} \dd{x} + \beta \HH^{N-1}(K),
    \end{equation*}
    over pairs $(u,K)$, where $K$ is a relatively closed subset of $\Omega$ of dimension $N-1$ and $u : \Omega \setminus K \to \R^N$ is a smooth vector field satisfying a Dirichlet condition along $\partial \Omega$. Here, $K$ is the fracture, $u$ is the displacement field, $e(u) := (\nabla u + \nabla u^T)/2$ is the linear strain tensor, $\C$ is an elasticity tensor and $\beta > 0$ is the fracture toughness (it quantifies the ability of the material to resist a crack propagation).
   The Griffith functional is a vectorial analogue of the Mumford-Shah functional \cite{AFP, DavidBOOK, DeLellisBOOK} which is more physically relevant in dimension $N = 3$.

    A particular case of interest is when $\int_{\Omega \setminus K} \CC{e(u)} \dd{x} = 0$; this corresponds to an asymptotic behavior of materials with no fracture toughness. In this case, it is standard that $K$ is a minimal set in $\Omega$. In the general case, fractures look like minimal sets at points $x \in K$ where the elastic energy $\int_{B(x,r)} \abs{e(u)}^2 \dd{x}$ becomes negligible compared to the surface term $\HH^{N-1}(K \cap B(x,r))$ when $r$ goes to $0$. This is actually the behavior of fractures at generic points since this holds almost-everywhere along the crack. This connection plays an important role in the regularity theory of Mumford-Shah minimizers, for instance \cite{DavidBOOK, DeLellisBOOK, AFH, DLF, DLFR, DPF}, and in their recent adaptations to the Griffith functional \cite{FLS, LL3}.

    For general brittle solid with inhomogeneous and possibly irregular fracture toughness, one would minimize
    \begin{equation*}
        \int_{\Omega \setminus K} \CC{e(u)} \dd{x} + \mu(K),
    \end{equation*}
    where $\mu$ is a Borel regular measure in $\Omega$ satisfying (\ref{eq_mu}). In that case, when there is no elastic energy or when it is neglible compared to the surface term, the fracture behaves like a quasiminimal set.
\end{rmk}

\begin{rmk}[Example]
    If $K$ is a quasiminimal set in $\R^N$ and $f : \R^N \to \R^N$ is a bi-Lipschitz map, then $f(K)$ is a also quasiminimal set in $\R^N$ (with a bigger constant). We deduce that Lipschitz graphs are quasiminimal sets because they can be written as bi-Lipschitz images of hyperplanes.

    This shows also that a blow-up limit of a quasiminimal set may not be a cone, neither unique in general. Indeed, let $K$ be the graph of $x \in \R \mapsto \mathrm{dist}(|x|,C)$, where $C = \set{2^i | i \in \mathbf{Z}}$. As a Lipschitz graph, $K$ is a quasiminimal set but one can see that $2K = K$ so for all $\lambda > 0$, the set $\lambda K$ concides with $\lim_{i \to +\infty} r_i^{-1} K$, where $r_i = \lambda^{-1} 2^{-i}$. Hence, $K$ has an infinite number of blow-up limits at $0$ which are not cones (see Fig.~\ref{ncn}).
\begin{figure}[ht]
\centerline{\includegraphics[width=0.8\linewidth]{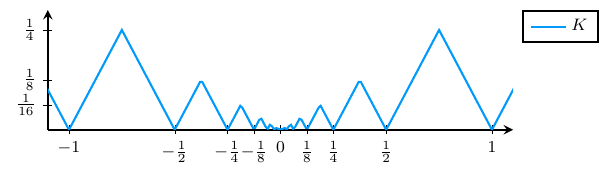}}
\caption{An example of quasiminimal set with different blow-up limits at the origin.}
\label{ncn}
\end{figure}
\end{rmk}

\section{Standard results}\label{section_standard}

Ahlfors-regularity and uniform rectifiability are well-known for Almgren quasiminimizers \cite{Alm76, DS20} of any codimension. These properties actually have a simpler proof when one consider quasiminimal of codimension 1 sets with respect to topological competitors. We present these proofs below for the sake of completeness and as some intermediate results will have an independent interest in the rest of the paper.
Let us make a few preliminary remarks.

\begin{rmk}
    Let $K$ be a quasiminimal set in $\Omega$.
    For all connected component $V$ of $\Omega \setminus K$, we have $\Omega \cap \partial V \subset K$, and as $K$ is $\HH^{N-1}$-locally finite, we deduce that $V$ has locally finite perimeter with $\Omega \cap \partial^* V \subset K$ (see \cite[Proposition 3.62]{AFP}).
\end{rmk}

\begin{rmk}
    If a relatively closed subset $F \subset \Omega$ satisfies the definition of topological competitors with $B(x_0,r)$ replaced by $\overline{B}(x_0,r)$, we say that $F$ is a topological competitor in $\overline{B}(x_0,r)$.
    This implies that $F$ is a topological competitor in all balls $B(x_0,t)$ such that $t > r$ and $\overline{B}(x_0,t) \subset \Omega$.
    Applying property (\ref{eq_quasi}) in such a ball $B(x_0,t)$, we still get
    \begin{equation*}
        \HH^{N-1}(K \setminus F) \leq M \HH^{N-1}(F \setminus K).
    \end{equation*}
    We can similarly apply (\ref{eq_quasi2}) in $B(x_0,t)$ such that $t > r$ and let $t \to r$ (using the fact that $K$ has locally finite measure in $\Omega$) to get
    \begin{equation*}
        \HH^{N-1}(K \cap \overline{B}(x_0,r)) \leq M \HH^{N-1}(F \cap \overline{B}(x_0,r)).
    \end{equation*}
\end{rmk}

\subsection{Ahlfors-regularity}\label{section_AR}

We start by observing that any ball centred on $K$ must separate at least two components of $\Omega \setminus K$.
\begin{lem}\label{lem_separation}
    Let $K$ be a $M$-quasiminimal set in $\Omega$. For all $x_0 \in K$ and $r > 0$ such that $B(x_0,r) \subset \Omega$,
    \begin{equation*}
        \text{$B(x_0,r)$ meets at least two components of $\Omega \setminus K$.}
    \end{equation*}
\end{lem}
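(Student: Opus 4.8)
The plan is to argue by contradiction: suppose $B(x_0,r)$ meets at most one connected component of $\Omega \setminus K$. I want to produce a topological competitor $F$ of $K$ in $B(x_0,r)$ that has strictly smaller $\HH^{N-1}$-measure inside the ball, contradicting the weaker quasiminimality property (\ref{eq_quasi2}) (which in fact suffices here). The natural candidate is $F := K \setminus B(x_0,r)$ — that is, we simply erase the part of $K$ inside the ball. Since $K$ is coral, $\HH^{N-1}(K \cap B(x_0,r)) > 0$, so indeed $\HH^{N-1}(F \cap B(x_0,r)) = 0 < \HH^{N-1}(K \cap B(x_0,r))$, and (\ref{eq_quasi2}) is violated provided $F$ is a legitimate topological competitor.

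So the heart of the matter is verifying that $F = K \setminus B(x_0,r)$ is a topological competitor, i.e. that removing $K \cap B(x_0,r)$ does not reconnect any two points $x, y \in \Omega \setminus (B(x_0,r) \cup K)$ that were separated by $K$. Suppose such $x,y$ were separated by $K$ but not by $F$; then there is a path $\gamma$ in $\Omega \setminus F$ joining them. Since $F$ and $K$ agree outside $B(x_0,r)$, the only way $\gamma$ can fail to avoid $K$ is by entering $B(x_0,r)$. The idea is then to reroute $\gamma$ around (or rather, through the complement structure near) the ball. Here is where the hypothesis "$B(x_0,r)$ meets at most one component of $\Omega \setminus K$" enters: every point of $\gamma$ lying in $B(x_0,r) \setminus K$ belongs to that single component $V$, and the endpoints of each maximal subarc of $\gamma$ inside $B(x_0,r)$ — which lie on $\partial B(x_0,r) \setminus K$, hence in $\Omega \setminus K$ — also lie in $V$ (they are limits of points of $V$, or one can take them slightly inside). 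Since $V$ is connected and open, these entry/exit points can be joined within $V$ (staying in $\Omega \setminus K$), allowing us to replace each excursion of $\gamma$ into the ball by a detour inside $V \cap (\Omega \setminus K)$. The resulting path lies entirely in $\Omega \setminus K$ and still joins $x$ to $y$, contradicting that $K$ separates them.

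A technical point to handle carefully: one should make sure $\gamma$ can be taken to cross $\partial B(x_0,r)$ in controlled fashion — e.g. by a small perturbation of $r$, or by noting that the relevant arguments only use path-connectedness of open subsets of $\R^N$ (so connected = path-connected for the components of the open set $\Omega \setminus F$ and of $V$). One must also confirm that the entry and exit points of an excursion genuinely lie in $V$: they lie on the sphere and not on $K$ (since $\gamma$ avoids $F \supset K \setminus B$, and a boundary point of the ball that is in $K$ is in $F$), and being limits of interior points of $\gamma$ that are in $V$, continuity places them in $\overline{V}$; a tiny inward push along $\gamma$ puts them in $V$ itself. This rerouting step — making the surgery on the path rigorous while respecting that we must stay in $\Omega \setminus K$ — is the main (though modest) obstacle; everything else is immediate from coralness and (\ref{eq_quasi2}).
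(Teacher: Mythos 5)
Your proposal follows essentially the same route as the paper: argue by contradiction, erase $K$ inside a ball to form the competitor, verify the topological condition by tracking where a connecting path can cross the sphere (necessarily at points of the single component $V$), and then contradict corality via (\ref{eq_quasi2}). The verification idea is the right one, and your observation that sphere points hit by the path cannot lie in $K$ (since they would then belong to $F$) is exactly the key point.

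Two remarks. First, there is a small but genuine technical slip in your choice of ball: the lemma only assumes $B(x_0,r) \subset \Omega$, whereas topological competitors (and the quasiminimality inequality) are defined for balls $B \subset\subset \Omega$; moreover $\partial B(x_0,r)$ need not even lie in $\Omega$, so statements about its points being in $\Omega \setminus K$ are not available. This is precisely why the paper takes $F := K \setminus B(x_0,r/2)$ and works in $B(x_0,r/2)$ (or one could use any $\rho < r$); the contradiction hypothesis about $B(x_0,r)$ still guarantees $\partial B(x_0,r/2) \setminus K \subset V$, and corality gives positive measure of $K$ in the smaller ball, so the argument goes through unchanged after this adjustment. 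Second, the rerouting surgery is more than you need, and your justification that the entry/exit points lie in $V$ is slightly off as written (interior points of an excursion may lie in $K \cap B$, since the path is only required to avoid $F$); the cleaner statement is that an entry point $z$ satisfies $z \notin K$ and $z \in B(x_0,r) \subset K \cup V$, hence $z \in V$. With that, it suffices to note (as the paper does) that the initial portion of the path from $x$ to its first sphere crossing, and the final portion from the last crossing to $y$, avoid $K$, so both $x$ and $y$ are connected to $V$ in $\Omega \setminus K$ and hence to each other; no path surgery inside $V$ is required.
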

\begin{proof}
    We proceed by contradiction and assume that there exists a connected component $V$ of $\Omega \setminus K$ such that
    \begin{equation*}
        B(x_0,r) \subset K \cup V.
    \end{equation*}
    Then, let us check that $F := K \setminus B(x_0,r/2)$ is a topological competitor of $K$ in $B(x_0,r/2)$.
    We consider two points $x$ and $y$ in $\Omega \setminus (B(x_0,r/2) \cup K)$ which are connected by a path in $\Omega \setminus F$ and we want to show that $x$ and $y$ are connected in $\Omega \setminus K$.
    There are two possibilities. If the path does not pass through $\partial B(x_0,r/2)$, it stays in $\Omega \setminus \overline{B}(x_0,r/2)$ where $F$ and $K$ coincide so $x$ and $y$ are connected in $\Omega \setminus K$.
    If the path passes through $\partial B(x_0,r/2)$ this can only be at a point of $\partial B(x_0,r/2) \setminus K \subset V$. Considering the portion of the part that starts from $x$ and meets $\partial B(x_0,t)$ for the first time, we see that $x$ is connected to $V$ in $\Omega \setminus K$ and thus $x \in V$. Considering similarly the portion of the path that leaves $\partial B(x_0,t)$ for the last time and arrive at $y$, we see that $y$ is connected to $V$ in $\Omega \setminus K$ so $y \in V$. We conclude that $x$ and $y$ are connected in $\Omega \setminus K$ and that $F$ is a topological competitor of $K$ in $B(x_0,r/2)$.

    We finally apply the quasiminimality property (\ref{eq_quasi2}) in $B(x_0,r/2)$, which gives
    \begin{equation*}
        \HH^{N-1}(K \cap B(x_0,r/2)) \leq M \HH^{N-1}(F \cap B(x_0,r/2)),
    \end{equation*}
    and thus
    \begin{equation*}
        \HH^{N-1}(K \cap B(x_0,r/2)) = 0.
    \end{equation*}
    This contradicts the fact that $x_0 \in K$ and that $K$ is coral.
\end{proof}

In the following Lemma, we show that in every ball $B$ centred on $K$, a connected component of $B \setminus K$ cannot be too big.
\begin{lem}\label{lem_af_volume}
    Let $K$ be a $M$-quasiminimal set in $\Omega$. Then for all $x_0 \in K$, for all $r > 0$ such that $B(x_0,r) \subset \Omega$ and for all connected component $V$ of $\Omega \setminus K$, we have
    \begin{equation}\label{eq_af_volume}
        \abs{B(x_0,r) \setminus V} \geq C^{-1} r^N, 
    \end{equation}
    where $C \geq 1$ is a constant which depends only on $N$ and $M$.
\end{lem}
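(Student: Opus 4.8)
The plan is to reduce the statement, by means of a single competitor that ``fills'' a sub-ball of $B(x_0,r)$ with the component $V$, to the Ahlfors lower bound for $K$ at the centre $x_0$. Concretely, I would first prove the intermediate estimate
\[
    |B(x_0,r)\setminus V|\ \ge\ \frac{r}{2M}\,\HH^{N-1}\!\left(K\cap B(x_0,r/2)\right),
\]
and then bound the right-hand side from below by $c(N,M)\,r^{N-1}$.

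To build the competitor, apply the coarea formula to the radial function $y\mapsto|y-x_0|$: this gives $|B(x_0,r)\setminus V|=\int_0^r\HH^{N-1}(\partial B(x_0,t)\setminus V)\,dt$, so by averaging over $t\in(r/2,r)$ there is some such $t$ with $\HH^{N-1}(\partial B(x_0,t)\setminus V)\le \tfrac2r|B(x_0,r)\setminus V|$. For this $t$, set
\[
    F:=\bigl(K\setminus B(x_0,t)\bigr)\cup\bigl(\partial B(x_0,t)\setminus V\bigr).
\]
I claim $F$ is a topological competitor of $K$ in $\overline B(x_0,t)$. Indeed $F$ coincides with $K$ outside $\overline B(x_0,t)$; and if $x,y\in\Omega\setminus(\overline B(x_0,t)\cup K)$ are joined by a path in $\Omega\setminus F$, then either the path stays in $\Omega\setminus\overline B(x_0,t)$, where $F=K$, or it first meets $\overline B(x_0,t)$ at a point of $\partial B(x_0,t)\setminus F=\partial B(x_0,t)\cap V$, and then, arguing exactly as in the proof of Lemma \ref{lem_separation} (inspecting the initial and terminal sub-arcs), $x$ and $y$ are both joined to $V$ in $\Omega\setminus K$ and hence lie in $V$. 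In every case $x$ and $y$ are not separated by $K$.

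Next I would feed $F$ into the quasiminimality inequality, using the remark that a topological competitor in a closed ball may still be used in (\ref{eq_quasi}). Since $K\setminus F=K\cap B(x_0,t)$ and $F\setminus K\subset\partial B(x_0,t)\setminus V$, this gives
\[
    \HH^{N-1}\!\left(K\cap B(x_0,r/2)\right)\le\HH^{N-1}\!\left(K\cap B(x_0,t)\right)\le M\,\HH^{N-1}\!\left(\partial B(x_0,t)\setminus V\right)\le\frac{2M}{r}\,|B(x_0,r)\setminus V|,
\]
which rearranges to the intermediate estimate. To conclude, I would invoke the Ahlfors lower-regularity of $K$, namely $\HH^{N-1}(K\cap B(x_0,r/2))\ge c(N,M)\,(r/2)^{N-1}$, and substitute to obtain $|B(x_0,r)\setminus V|\ge 2^{-N}M^{-1}c(N,M)\,r^{N}$, which is the claim with $C=C(N,M)$.

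The delicate point is this last lower bound. One usually derives Ahlfors lower-regularity from Lemma \ref{lem_separation} together with the relative isoperimetric inequality: as $B(x_0,\rho)$ meets at least two components of $\Omega\setminus K$, one selects a union $U$ of components with $|U\cap B(x_0,\rho)|$ and $|B(x_0,\rho)\setminus U|$ each a fixed fraction of $|B(x_0,\rho)|$, whence $\HH^{N-1}(K\cap B(x_0,\rho))\ge\HH^{N-1}(\partial^*U\cap B(x_0,\rho))\gtrsim\rho^{N-1}$. But finding such a balanced $U$ uses exactly the volume control we are proving here (one must rule out that a single component is almost all of the ball), so the two statements are genuinely interdependent. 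The honest way to set this up is to prove the volume estimate and the Ahlfors lower bound simultaneously by induction on the scale, or to argue by blow-up and compactness of quasiminimal sets — in a limiting configuration a single component would fill an entire ball, and then the competitor of Lemma \ref{lem_separation} would force $\HH^{N-1}(K)$ to vanish near the centre, contradicting corality. This circularity, rather than any individual construction, is the main obstacle I would expect.
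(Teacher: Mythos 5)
Your competitor is exactly the paper's competitor, and the intermediate estimate $\HH^{N-1}(K\cap B(x_0,r/2))\leq \HH^{N-1}(K\cap B(x_0,t))\leq M\,\HH^{N-1}(\partial B(x_0,t)\setminus V)\leq \tfrac{2M}{r}\abs{B(x_0,r)\setminus V}$ is correct. But the reduction you then perform is circular, and you have not closed the circle: in this paper the lower bound $\HH^{N-1}(K\cap B(x_0,r/2))\geq c\,r^{N-1}$ of Proposition \ref{prop_af} is \emph{deduced from} Lemma \ref{lem_af_volume} (the lower-bound half of the Ahlfors regularity proof starts from $\abs{B(x_0,r)\setminus V_0}\geq C^{-1}r^N$ and the isoperimetric estimate (\ref{eq_isoperimetric})), so it is not available here, and the only ingredient you could use instead, Lemma \ref{lem_separation}, gives no quantitative lower bound by itself when a single component fills almost all of the ball --- which is precisely the configuration to be excluded. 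You identify this interdependence yourself, but the two escape routes you sketch are not carried out, and the blow-up/compactness one is particularly problematic in this setting: the paper emphasizes that quasiminimal sets (with topological competitors) have no monotonicity and non-conical, non-unique blow-ups, and no closure theorem for this class under rescaled limits is established anywhere in the paper, so "argue by blow-up and compactness" is a substantial missing lemma, not a routine step.

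The paper breaks the circularity differently, without ever invoking the Ahlfors lower bound. Assume $\abs{B(x_0,r)\setminus V}\leq\varepsilon r^N$; with the same radius selection and the same competitor, quasiminimality gives $\HH^{N-1}(B(x_0,r/2)\cap\partial^* V)\leq C\varepsilon r^{N-1}$, and the relative isoperimetric inequality (in the favourable form of Remark \ref{rmk_isoperimetry}, available because $\varepsilon$ is small) upgrades this to $\abs{B(x_0,r/2)\setminus V}\leq C\varepsilon^{N/(N-1)}r^N\leq\varepsilon(r/2)^N$ for $\varepsilon$ small. Iterating over dyadic scales shows that the smallness hypothesis forces $\limsup_{t\to0}t^{-N}\abs{B(x,t)\setminus V}\leq 2^N\varepsilon$ at \emph{every} point $x\in K\cap B(x_0,r/2)$. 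On the other hand, Lemma \ref{lem_separation} guarantees $\abs{B(x_0,r/2)\cap V}>0$ and $\abs{B(x_0,r/2)\setminus V}>0$, hence $\HH^{N-1}(B(x_0,r/2)\cap\partial^* V)>0$, and a reduced-boundary point has density exactly $1/2$ for the complement of $V$; this contradicts the density bound once $\varepsilon$ is small. If you want to salvage your write-up, replace the appeal to Proposition \ref{prop_af} by this self-improving decay and density-$1/2$ contradiction; as it stands, the last step is a genuine gap.
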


\begin{rmk}\label{rmk_isoperimetry}
    As an application of (\ref{eq_af_volume}) which will be very helpfup in the rest of the paper, notice that the relative isoperimetry inequality
    \begin{equation*}
        \min \left(\abs{B(x_0,r) \cap V}, \abs{B(x_0,r) \setminus V}\right) \leq C \HH^{N-1}(B(x_0,r) \cap \partial^* V)^{N/(N-1)}
    \end{equation*}
    simplifies to
    \begin{equation*}
        \abs{B(x_0,r) \cap V} \leq C \HH^{N-1}(B(x_0,r) \cap \partial^* V)^{N/(N-1)}.
    \end{equation*}
\end{rmk}

\begin{proof}
    Let $x_0 \in K$ and $r > 0$ such that $B(x_0,r) \subset \Omega$. Let $V$ be a component of $\Omega \setminus K$.
    We assume that
    \begin{equation}\label{eq_initV}
        \abs{B(x_0,r) \setminus V} \leq \varepsilon r^N,
    \end{equation}
    for a small $\varepsilon > 0$ which will be fixed later.
    According to the co-area formula,
    \begin{equation*}
        \abs{B(x_0,r) \setminus V} = \int_0^r \HH^{N-1}(\partial B(x_0,t) \setminus V) \dd{t}
    \end{equation*}
    so we can find a radius $t \in (r/2,r)$ such that
    \begin{equation}\label{eq_tV}
        \HH^{N-1}(\partial B(x_0,t) \setminus V) \leq C r^{-1} \abs{B(x_0,r) \setminus V} \leq C \varepsilon r^{N-1}.
    \end{equation}
    We then justify that
    \begin{equation*}
        F := \left(K \setminus B(x_0,t)\right) \cup \big(\partial B(x_0,t) \setminus V\big).
    \end{equation*}
    is a topological competitor of $K$ in $\overline{B}(x_0,t)$.
    We consider two points $x$ and $y$ in $\Omega \setminus (\overline{B}(x_0,t) \cup K)$ which are connected by a path in $\Omega \setminus F$ and we want to show that $x$ and $y$ are connected in $\Omega \setminus K$.
    If the path does not intersect $\partial B(x_0,t)$, then it stays in the complement of $\overline{B(x_0,t)}$, where $F$ coincides with $K$. In this case, the path does not meet $K$ and the points $x$ and $y$ are also connected in $\Omega \setminus K$.
    If the path path meets $\partial B(x_0,t)$, it can only be at a point of $V$. Considering the portion of the path starting from $x$ until the first time it meets $\partial B(x_0,t)$, we see that $x$ is connected to $V$ in the complement of $K$.
    Similarly, $y$ is connected to $V$ in the complement of $K$. As $V$ is connect and disjoint from $K$, the points $x$ and $y$ are also connected in $\Omega \setminus K$. We conclude that $F$ is a topological competitor of $K$ in $\overline{B}(x_0,t)$.

    We apply the quasiminimality property (\ref{eq_quasi2}) in $\overline{B}(x_0,t)$, which gives
    \begin{equation*}
        \HH^{N-1}(K \cap \overline{B}(x_0,t)) \leq M \HH^{N-1}(F \cap \overline{B}(x_0,t)),
    \end{equation*}
    whence by (\ref{eq_tV}),
    \begin{equation*}\label{eq_minV}
        \HH^{N-1}(B(x_0,r/2) \cap \partial^* V) \leq M \HH^{N-1}(\partial B(x_0,t) \setminus V) \leq C \varepsilon r^{N-1}\\
    \end{equation*}
    By assumption (\ref{eq_initV}), we can choose $\varepsilon$ small enough so that
    \begin{equation*}
        \abs{B(x_0,r/2) \setminus V} \leq \tfrac{1}{2} \abs{B(x_0,r/2)},
    \end{equation*}
    and thus we can apply the relative isoperimetric inequality to $B(x_0,r/2) \setminus V$ in the ball $B(x_0,r/2)$.
    This gives
    \begin{equation}\label{eq_isoV}
        \abs{B(x_0,r/2) \setminus V} \leq C \HH^{N-1}(B(x_0,r/2) \cap \partial^* V)^{N/(N-1)}.
    \end{equation}
    As $\partial^* V \subset \partial V \subset K$, we can combine (\ref{eq_minV}) and (\ref{eq_isoV}) to estimate
    \begin{equation*}
        \abs{B(x_0,r/2) \setminus V} \leq C \varepsilon^{N/(N-1)} r^N.
    \end{equation*}
    We finally choose $\varepsilon$ small enough so that $\abs{B(x_0,r/2) \setminus V} \leq \varepsilon r^N$.
    We can thus iterate this estimate and deduce that for all integer $k \geq 0$,
    \begin{equation*}
        \abs{B(x_0,2^{-k}r) \setminus V} \leq \varepsilon (2^{-k} r)^N.
    \end{equation*}
    A simple interpolation arguments allow to conclude that
    \begin{equation}\label{eq_conclusion_V}
        \abs{B(x_0,r) \setminus V} \leq \varepsilon r^N \implies \lim_{t \to 0} t^{-N} \abs{B(x_0,t) \setminus V} \leq 2^{N} \varepsilon.
    \end{equation}

    Let us assume now that we have $\abs{B(x_0,r) \setminus V} \leq \varepsilon (r/2)^N$.
    For all $x \in K \cap B(x_0,r/2)$, we have
    \begin{equation*}
        \abs{B(x,r/2) \setminus V} \leq \varepsilon (r/2)^N,
    \end{equation*}
    which implies by (\ref{eq_conclusion_V}),
    \begin{equation}\label{eq_conclusion_xV}
        \lim_{t \to 0} t^{-N} \abs{B(x,t) \setminus V} \leq 2^N \varepsilon.
    \end{equation}
    We take $\varepsilon$ a bit smaller so that the small volume of $B(x_0,r) \setminus V$ implies that $$\abs{V \cap B(x_0,r/2)} > 0.$$ According to Lemma \ref{lem_separation}, $B(x_0,r/2) \setminus K$ must meet another component of $\Omega \setminus K$ so we also have $\abs{B(x_0,r/2) \setminus V} > 0$.
    It follows that $\HH^{N-1}(B(x_0,r/2) \cap \partial^* V) > 0$ and, by the properties of the reduced boundary, we can find a point $x \in B(x_0,r/2) \cap \partial^* V \subset B(x_0,r/2) \cap K$ such that
    \begin{equation*}
        \lim_{t \to 0} \frac{\abs{B(x,t) \setminus V}}{\abs{B(x,t)}} = \frac{1}{2},
    \end{equation*}
    which contradicts (\ref{eq_conclusion_xV}) if $\varepsilon$ is chosen small enough once again.
\end{proof}

We finally arrive at the Ahlfors-regularity property for quasiminimal sets.
\begin{prop}[Ahlfors-regularity]\label{prop_af}
    Let $K$ be a $M$-quasiminimal set in $\Omega$. Then for all $x_0 \in K$ and for all $r > 0$ such that $B(x_0,r) \subset \Omega$, we have
    \begin{equation*}
        C^{-1} r^{N-1} \leq \HH^{N-1}(K \cap B(x_0,r)) \leq C r^{N-1},
    \end{equation*}
    where $C \geq 1$ is a constant that depends only on $N$ and $M$.
\end{prop}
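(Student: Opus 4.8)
The plan is to prove the two inequalities by rather different means: the upper bound is a soft comparison argument, while the lower bound combines the volume estimate of Lemma~\ref{lem_af_volume} with the relative isoperimetric inequality.

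For the upper bound, I would fix $0 < t < r$ and consider the competitor $F := (K \setminus B(x_0,t)) \cup \partial B(x_0,t)$. Since $\overline{B}(x_0,t) \subset B(x_0,r) \subset \Omega$, the set $F$ is relatively closed in $\Omega$, coincides with $K$ outside $\overline{B}(x_0,t)$, and is a topological competitor of $K$ in $\overline{B}(x_0,t)$: any path in $\Omega \setminus F$ joining two points of $\Omega \setminus (\overline{B}(x_0,t)\cup K)$ cannot cross the sphere $\partial B(x_0,t)\subset F$, so it stays in $\Omega\setminus\overline{B}(x_0,t)$, where $F$ and $K$ agree, hence it avoids $K$. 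By the earlier remark on competitors in closed balls, (\ref{eq_quasi}) applies and gives $\HH^{N-1}(K\setminus F)\le M\,\HH^{N-1}(F\setminus K)$; since $K\setminus F = K\cap B(x_0,t)$ and $F\setminus K\subset\partial B(x_0,t)$, this reads $\HH^{N-1}(K\cap B(x_0,t))\le M\,\HH^{N-1}(\partial B(x_0,t)) = C\,t^{N-1}$. Letting $t\uparrow r$ (continuity of the measure from below) yields $\HH^{N-1}(K\cap B(x_0,r))\le C\,r^{N-1}$.

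For the lower bound, I would work in $B(x_0,\rho)$ with $\rho := r/2$ and denote by $(V_j)_j$ the connected components of $\Omega\setminus K$. Since $K$ is $\HH^{N-1}$-locally finite, $|K\cap B(x_0,\rho)| = 0$, so $\sum_j |B(x_0,\rho)\cap V_j| = |B(x_0,\rho)|$, and $\partial^* V_j\cap B(x_0,\rho)\subset K$ for every $j$ (each $V_j$ has locally finite perimeter). I would then distinguish two cases. If some component $V$ satisfies $|B(x_0,\rho)\cap V|\ge\tfrac12|B(x_0,\rho)|$, then Lemma~\ref{lem_af_volume} gives $C^{-1}\rho^N\le|B(x_0,\rho)\setminus V|\le\tfrac12|B(x_0,\rho)|$, and applying the relative isoperimetric inequality to $B(x_0,\rho)\setminus V$ in $B(x_0,\rho)$ (whose reduced boundary inside the ball lies in $K$) gives $\HH^{N-1}(K\cap B(x_0,\rho))\ge C^{-1}\rho^{N-1}$. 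Otherwise every component satisfies $|B(x_0,\rho)\cap V_j|<\tfrac12|B(x_0,\rho)|$, so Remark~\ref{rmk_isoperimetry} applies to each $V_j$ and yields $|B(x_0,\rho)\cap V_j|^{(N-1)/N}\le C\,\HH^{N-1}(\partial^* V_j\cap B(x_0,\rho))$; summing over $j$, using $\sum_j a_j^{(N-1)/N}\ge(\sum_j a_j)^{(N-1)/N}$ on the left and $\sum_j\HH^{N-1}(\partial^* V_j\cap B(x_0,\rho))\le 2\,\HH^{N-1}(K\cap B(x_0,\rho))$ on the right (every point lies in the reduced boundary of at most two of the $V_j$), one obtains $|B(x_0,\rho)|^{(N-1)/N}\le 2C\,\HH^{N-1}(K\cap B(x_0,\rho))$, i.e. again $\HH^{N-1}(K\cap B(x_0,\rho))\ge C^{-1}\rho^{N-1}$. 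In both cases $\HH^{N-1}(K\cap B(x_0,r))\ge\HH^{N-1}(K\cap B(x_0,\rho))\ge C^{-1}r^{N-1}$.

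The hard part is the lower bound, and the reason is that near $x_0$ the set $\Omega\setminus K$ may a priori have infinitely many components: one cannot simply single out a neighbouring component and run a single isoperimetric estimate. This is what forces the dichotomy above --- in the ``no dominant component'' case the isoperimetric inequalities must be summed over all components simultaneously, which is where the subadditivity of $t\mapsto t^{(N-1)/N}$ and the elementary double-counting bound $\sum_j\HH^{N-1}(\partial^* V_j\cap B)\le 2\,\HH^{N-1}(K\cap B)$ come in. (That the number of components is in fact locally bounded is only proved much later in the paper.)
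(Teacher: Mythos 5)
Your proof is correct and follows essentially the same route as the paper: the identical competitor $(K \setminus B(x_0,t)) \cup \partial B(x_0,t)$ for the upper bound, and for the lower bound the same combination of Lemma~\ref{lem_af_volume}, the relative isoperimetric inequality applied componentwise, the double-counting bound $\sum_j \HH^{N-1}(\partial^* V_j \cap B) \leq 2\,\HH^{N-1}(K \cap B)$, and the (sub/super)additivity of the power functions. Your dichotomy on whether some component fills half the ball is only a cosmetic repackaging of the paper's trick of ordering the components by volume and summing the isoperimetric inequalities over all but the largest one.
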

\begin{proof}
    Let $x_0 \in K$ and $r > 0$ be such that $B(x_0,r) \subset \Omega$. We start by proving the upper density bound.
    For $t < r$, we consider the relatively closed subset $F \subset \Omega$ defined by
    \begin{equation*}
        F := \left(K \setminus B(x_0,t)\right) \cup \partial B(x_0,t).
    \end{equation*}
    Then, $F$ is a topological competitor of $K$ in $\overline{B}(x_0,t)$ and we deduce by quasiminimality (\ref{eq_quasi2}),
    \begin{equation*}
        \HH^{N-1}(K \cap \overline{B}(x_0,t)) \leq M \HH^{N-1}(\partial B(x_0,t)) \leq C t^{N-1},
    \end{equation*}
    where $C \geq 1$ depends on $N$ and $M$.
    Letting $t \to r$, we deduce the upper bound
    \begin{equation*}
        \HH^{N-1}(K \cap B(x_0,r)) \leq C r^{N-1}.
    \end{equation*}

    We now pass to the lower bound.
    We let $(V_i)_{i \geq 0}$ denote the connected components of $\Omega \setminus K$, ordered in such a way that $$\abs{V_i \cap B(x_0,r)} \geq \abs{V_{i+1} \cap B(x_0,r)}.$$
    We show that
    \begin{equation}\label{eq_isoperimetric}
        \abs{B(x_0,r) \setminus V_0} \leq C \HH^{N-1}(K \cap B(x_0,r))^{N/(N-1)}.
    \end{equation}
    As $(V_i)_{i \geq 0}$ is a Caccioppoli partition of $\Omega$, it follows from \cite[Theorem 4.17]{AFP} that
    \begin{equation}\label{eq_ViKn}
        \sum_{i \geq 0} \HH^{N-1} \big(\partial^* V_i \cap B(x_0,r) \big) \leq 2 \HH^{N-1} \big(K \cap B(x_0,r) \big). 
    \end{equation} 
    By the ordering property of the sequence, we see that for all $i \geq 1$, we have
    \begin{equation*}
        \abs{B(x_0,r) \cap V_i} \leq \abs{B(x_0,r) \cap V_0} \leq \abs{B(x_0,r) \setminus V_i},
    \end{equation*}
    and thus by the relative isoperimetric inequality
    \begin{equation*}
        \abs{B(x_0,r) \cap V_i} \leq C \HH^{N-1}\big(B(x_0,r) \cap \partial^* V_i\big)^{N/(N-1)}.
    \end{equation*}
    We sum the inequality above over $i \geq 1$, we use the superadditivity of $t \mapsto t^{N/(N-1)}$, and we apply \eqref{eq_ViKn} to get the wanted estimate \eqref{eq_isoperimetric}.
    To conclude, we know by Lemma \ref{lem_af_volume} that
    \begin{equation*}
        \abs{B(x_0,r) \setminus V_0} \geq C^{-1} r^N
    \end{equation*}
    and thus (\ref{eq_isoperimetric}) implies
    \begin{equation*}
        \HH^{N-1}(K \cap B(x_0,r))^{N/(N-1)} \geq C^{-1} r^{N-1}.
    \end{equation*}
\end{proof}

\subsection{Uniform rectifiability}\label{section_UR}

A uniformly rectifiable set in $\R^N$ of dimension $N-1$ is a closed Ahlfors-regular set $E \subset \R^N$ which is contained in an image of a fairly nice parametrization $z : \R^{N-1} \to \R^N$. The following definition is extracted from \cite[Theorem 1.57]{DSBOOK}.

\begin{defi}
    Let $C \geq 1$ be a constant.
    We say that a closed set $E \subset \R^N$ is a uniformly rectifiable set of constant $C \geq 1$ if
    \begin{equation*}
        C^{-1} r^{N-1} \leq \HH^{N-1}(E \cap B(x_0,r)) \leq C r^{N-1} \quad \text{for all $x_0 \in E$, $r > 0$}
    \end{equation*}
    and $E \subset z(\R^N)$, where $z : \R^{N-1} \to \R^N$ is such that there exists a positive function $w \in L^1_{\mathrm{loc}}(\R^N)$ satisfying $\fint_B \omega \dd{x} \leq C \mathrm{ess. inf}_B \omega$ for all ball $B \subset \R^N$ and such that
    \begin{equation*}
        \abs{\nabla z} \leq \omega^{1/(N-1)} \quad \text{almost-everywhere,} 
    \end{equation*}
    and
    \begin{equation*}
        \int_{\set{y \in \R^d | z(y) \in B(x,r)}} \omega(y) \dd{y} \leq C r^{N-1} \quad \text{for all ball $B(x,r) \subset \R^N$.} 
    \end{equation*}
    The parametrization $z$ is called a $\omega$-regular parametrization and $\omega$ is called a $A_1$-weight.
\end{defi}

The definition allows bi-Lipschitz images of $\R^{N-1}$ into $\R^N$ but also surfaces with cusps and self-intersections to some extent. Whereas rectifiable sets are contained in a countable union of surfaces, a uniformly rectifiable set is contained in a single surface and the properties of the parametrization are meant to give quantitative information at all scales and locations (instead of in the blow-up regime almost-everywhere).

As in \cite{DS98}, we shall prove uniform rectifiable of quasiminimal sets using the following critera. It is proved in \cite{DS93}.
\begin{thm}[Condition B imply UR]\label{thm_BUR}
    Let $E \subset \R^N$ be a closed set and assume that there exists a constant $C \geq 1$ such that 
    \begin{enumerate}[label = (\roman*)]
        \item for all $x_0 \in E$, for all $r > 0$,
    \begin{equation*}
        C^{-1} r^{N-1} \leq \HH^{N-1}(E \cap B(x_0,r));
    \end{equation*}
\item $E$ satisfies condition B: for all $x_0 \in E$, for all $r > 0$, there exists two balls $B_1, B_2 \subset B(x_0,r) \setminus E$ with radius $\geq C^{-1} r$ and which lie in distinct connected components of $\R^N \setminus E$.
\end{enumerate}
    Then $E$ is a uniformly rectifiable set with a constant that depends only on $N$ and $C$.
\end{thm}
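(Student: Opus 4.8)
The plan is to argue as David and Semmes do in \cite{DS93}: prove that $E$ has \emph{big pieces of Lipschitz graphs}, which by their theory implies uniform rectifiability. Concretely I would produce a constant $c = c(N,C) > 0$ so that for every $x_0 \in E$ and $r > 0$ there are a hyperplane $P$ and an $L$-Lipschitz function $\varphi : P \to \R$, with $L \le L(N,C)$, whose graph $\Gamma$ satisfies $\HH^{N-1}(E \cap \Gamma \cap B(x_0,r)) \ge c\, r^{N-1}$. Since uniform rectifiability includes Ahlfors-regularity on both sides, one first checks that (i) and Condition B together force the matching upper bound $\HH^{N-1}(E \cap B(x_0,r)) \le C' r^{N-1}$; I would dispatch this at the outset and afterwards treat $E$ as Ahlfors-regular.

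To build $\Gamma$, fix $x_0 \in E$ and $r > 0$ and let $B_1 = B(z_1,\rho)$, $B_2 = B(z_2,\rho)$ be the balls given by Condition B: $\rho \ge C^{-1} r$, they lie in distinct components $U_1 \ne U_2$ of $\R^N \setminus E$, and $B_1, B_2 \subset B(x_0,r)$. Take $P$ orthogonal to $z_2 - z_1$, write $\R^N = P \times \R$ with $\pi$ the projection onto $P$, and set $D := \pi(B_1) \cap \pi(B_2)$, a disk in $P$ of radius $\gtrsim r$. For $z' \in D$, the portion of the vertical line $\ell_{z'}$ between $B_1$ and $B_2$ runs from $U_1$ to $U_2$ and hence must meet $E$; defining $\varphi_0(z')$ to be the height at which it first leaves $\overline{U_1}$ on the way from $B_1$ to $B_2$, the set $\{(z',\varphi_0(z')) : z' \in D\}$ is contained in $E \cap B(x_0,r)$ and projects onto all of $D$.

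It remains to replace $\varphi_0$ by an honest Lipschitz function over a subset of $D$ whose measure is still $\gtrsim r^{N-1}$. Controlling vertical multiplicity is the easy half: the coarea (Eilenberg) inequality for the $1$-Lipschitz map $\pi$ on $E \cap B(x_0,r)$, combined with the upper Ahlfors bound, gives $\int_D \#\big(\ell_{z'} \cap E \cap B(x_0,r)\big)\,dz' \le C'' r^{N-1}$, so by Chebyshev there is $D' \subseteq D$ with $|D'| \ge \tfrac12 |D| \gtrsim r^{N-1}$ along which $\ell_{z'}$ meets $E \cap B(x_0,r)$ in boundedly many points. The main obstacle is the ``no vertical folding'' estimate, i.e.\ showing $\varphi_0$ is Lipschitz on a further subset of $D'$ still of measure $\gtrsim r^{N-1}$: Condition B does not rule out a thin vertical wall of $E$ inside a single ball, so this cannot be done in one step, and instead one runs a stopping-time (corona-type) decomposition over $B(x_0,r)$ — descending in scale as long as $E$ stays uniformly Hausdorff-close to a fixed translate of $P$ — where a compactness argument for closed sets satisfying Condition B forces the stopping balls to be Carleson-sparse; over the unstopped part $E$ sits in a thin slab $\{|t-\varphi_0(z')|\le \varepsilon r\}$ and $\varphi_0$ is automatically $L$-Lipschitz there. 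A persistent nuisance in this step is bookkeeping which of the (possibly infinitely many) components of $\R^N \setminus E$ the auxiliary complementary balls belong to, since everything hinges on the relevant components staying distinct through the scales. Extending the restricted $\varphi_0$ to an $L$-Lipschitz $\varphi$ on all of $P$ by McShane's formula and using that the graph map is bi-Lipschitz then yields $\HH^{N-1}(E \cap \Gamma \cap B(x_0,r)) \gtrsim r^{N-1}$, completing the verification of big pieces of Lipschitz graphs and hence of uniform rectifiability.
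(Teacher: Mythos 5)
First, note that the paper does not prove this theorem at all: it is imported verbatim from David--Semmes \cite{DS93} (it is the ``Condition B implies uniform rectifiability'' theorem, closely tied to David--Jerison's big-pieces-of-Lipschitz-graphs result), so what you are proposing is to reprove a substantial known theorem rather than to reproduce an argument of this paper. Your outline does follow the right general strategy (a big Lipschitz graph over the hyperplane orthogonal to $z_2-z_1$, with $\varphi_0$ the first-exit height from $\overline{U_1}$), and the multiplicity bound via the Eilenberg/coarea inequality is fine, but the decisive step is missing. Asserting that ``a compactness argument for closed sets satisfying Condition B forces the stopping balls to be Carleson-sparse'' is not an argument: the stopped balls are precisely those where $E$ fails to be $\varepsilon$-flat, and the claim that these form a Carleson family is (a version of) the Weak Geometric Lemma, which for Condition B sets is essentially equivalent to the uniform rectifiability you are trying to prove; compactness yields no quantitative packing bound, so the step is circular as written. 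In the literature this core difficulty is handled by genuinely different tools — harmonic measure and maximum-principle estimates in David--Jerison, or the corona/square-function machinery of \cite{DS93} — and nothing in your sketch replaces them. Moreover, $E$ lying in a thin slab on the unstopped region does not by itself make $\varphi_0$ Lipschitz: one must also rule out jumps between different sheets of $E$ inside the slab, which again requires propagating the separation property (which complementary components the auxiliary balls lie in) quantitatively through the scales — the very ``nuisance'' you flag but do not resolve.

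Your preliminary reduction is also false as stated: hypothesis (i) together with Condition B does \emph{not} imply the upper Ahlfors bound. Take $E\subset\R^2$ to be the von Koch snowflake boundary: $\HH^{1}(E\cap B(x,r))=+\infty$ for every ball centered on $E$ (the curve has dimension $>1$), so (i) holds trivially; both complementary components are NTA domains, so interior and exterior corkscrew balls give Condition B; yet $E$ admits no upper bound and is not uniformly rectifiable of dimension $1$. The upper regularity must be part of the hypotheses (as it is in \cite{DS93}, and as the theorem here should be read), and indeed the paper applies it to $(K\cap \overline{B}(x_0,r))\cup\partial B(x_0,r)\cup P$ precisely because that set is two-sided Ahlfors regular. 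So either cite the theorem, as the paper does, or be prepared to reproduce the full David--Jerison/David--Semmes argument; the present sketch does neither.
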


We now come back to the properties of quasiminimal sets.

\begin{prop}[Condition B]\label{prop_conditionB}
    Let $K$ be a $M$-quasiminimal set in $\Omega$. For all $x_0 \in K$ and $r > 0$ such that $B(x_0,2r) \subset \Omega$, there exists exists two balls $B_1, B_2 \subset B(x_0,r)$ with radius $\geq C^{-1} r$ which lie in distinct connected components of $\Omega \setminus K$.
\end{prop}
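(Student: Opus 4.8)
The plan is to produce the two balls inside $B(x_0,r)$ by combining the two ingredients already at our disposal: Lemma \ref{lem_separation}, which says $B(x_0,r)$ meets at least two distinct components of $\Omega \setminus K$, and Lemma \ref{lem_af_volume}, which says that no single component can occupy almost all of a ball centred on $K$. First I would use Lemma \ref{lem_separation} to pick two distinct components $V_1, V_2$ of $\Omega \setminus K$ which both meet $B(x_0, r/2)$, say at points $y_1 \in V_1 \cap B(x_0,r/2)$ and $y_2 \in V_2 \cap B(x_0,r/2)$. The issue is that $y_i$ could be very close to $K$, so the component $V_i$ need not contain a large ball near $y_i$; I need to find, inside $B(x_0,r)$, a point of $V_i$ that is far from $K$.

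The key step is a volume/covering argument. Fix $i \in \{1,2\}$. I claim there is a point $z_i \in V_i \cap B(x_0, r/2 + r/8)$ with $\mathrm{dist}(z_i, K) \geq C^{-1} r$. Suppose not: then every point of $V_i$ in that ball is within $\delta r$ of $K$ for $\delta = C^{-1}$ small. Cover $K \cap B(x_0, 3r/4)$ by boundedly many (a number depending only on $N$) balls $B(w_j, \delta r)$ with $w_j \in K$; by the Ahlfors upper bound (Proposition \ref{prop_af}) together with a standard Vitali covering, the number of such balls needed is $\lesssim \delta^{1-N}$, and each has volume $\asymp (\delta r)^N$, so the $(2\delta r)$-neighbourhood of $K \cap B(x_0,3r/4)$ has volume $\lesssim \delta^{1-N} (\delta r)^N = \delta\, r^N$. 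Hence $|V_i \cap B(x_0, r/2+r/8)| \lesssim \delta r^N$. But this forces $|B(x_0, r/2+r/8) \setminus V_1|$ or $|B(x_0,r/2+r/8)\setminus V_2|$ to be small — more precisely, since $V_1$ and $V_2$ are disjoint, if \emph{both} had small volume in this ball we would contradict nothing directly, so instead I run the argument for a single component: if $V_i$ has small volume in $B(x_0, r/2+r/8)$ while $|V_i \cap B(x_0,r/2)| > 0$ (which holds since $y_i$ witnesses it — more carefully, $V_i$ meets this ball so has positive volume there by openness), that is fine, but what I actually need is the complement bound. Let me instead argue directly: by Lemma \ref{lem_af_volume} applied at $x_0$ with radius $r/2 + r/8$, we have $|B(x_0, r/2+r/8) \setminus V_i| \geq C^{-1} r^N$; combined with $|B(x_0,r/2+r/8)| \asymp r^N$, there is a definite amount of room \emph{outside} $V_i$, but I want room \emph{inside}. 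The clean fix: apply Lemma \ref{lem_af_volume} to \emph{every other} component to bound $\sum_{j \neq i} |B \setminus V_j|$ from below is the wrong direction; instead note $|B(x_0,r/2+r/8) \cap V_i| = |B| - |B \setminus V_i|$, and to make this large I need $|B \setminus V_i|$ \emph{small}, which is exactly what Lemma \ref{lem_af_volume} forbids. So I should not expect $V_i$ itself to be large. The resolution is that I only need $V_1$ and $V_2$ each to contain \emph{one} ball of radius $C^{-1}r$, not to be large.

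So the real argument is: among the components of $\Omega\setminus K$ meeting $B(x_0,r/2)$, use the isoperimetric consequence (Remark \ref{rmk_isoperimetry}) and Ahlfors regularity to show that the part of $B(x_0, 3r/4)$ lying within $\delta r$ of $K$ has volume $\le \eta(\delta) r^N$ with $\eta(\delta)\to 0$; pick $\delta$ so that $\eta(\delta) < \tfrac12 |B(0,r/8)|\, r^{-N}$-type threshold. Then for \emph{any} component $V$ meeting $B(x_0, r/2)$ at a point $y$, I translate a ball: the point $y$ lies in $V$, and I claim the component $V$ reaches deep into the interior — here I use that $V$ is open and connected, so from $y$ I can move within $V$; but moving within $V$ I might stay near $K$. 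The honest way to get the deep point is: consider the connected component $U$ of $V \cap B(x_0, 3r/4)$ containing $y$; it is open, and $\partial U \subset K \cup \partial B(x_0,3r/4)$. If $U$ stayed entirely in the $\delta r$-neighbourhood of $K$, then $|U| \le \eta(\delta) r^N$; but a relative-isoperimetric / Ahlfors lower bound argument (as in Lemma \ref{lem_af_volume}'s proof, run for $U$ in place of $V$, using that $U$ has small volume) would force $|U \cap B(x_0,r/2)| = 0$, contradicting $y \in U$. Hence $U$ contains a point $z_i$ with $\mathrm{dist}(z_i,K) \ge C^{-1}r$, and since $\mathrm{dist}(z_i, \partial B(x_0,3r/4)) $ may be small I instead run everything with the slightly larger containing ball so that $B(z_i, C^{-1}r) \subset B(x_0,r) \setminus K$ and $B(z_i,C^{-1}r)$ lies in the single component $V_i$. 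Taking $B_i := B(z_i, C^{-1}r)$ for $i=1,2$ gives two balls of radius $\ge C^{-1}r$ inside $B(x_0,r)$ in distinct components $V_1 \neq V_2$, as required.

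The main obstacle I anticipate is the bookkeeping in the "$V$ cannot hug $K$" step: one must show that a component meeting an inner ball must contain a point at definite distance from $K$, and the cleanest route is to mimic the iteration in the proof of Lemma \ref{lem_af_volume} but applied to the connected piece $U$ of $V$ inside a fixed ball, leveraging Ahlfors regularity of $K$ (Proposition \ref{prop_af}) and the relative isoperimetric inequality to convert "small volume of $U$" into "zero volume of $U$ near the centre", contradicting that $U$ meets $B(x_0,r/2)$. Getting the radii and the nesting of balls $B(x_0,r/2) \subset B(x_0, r/2 + r/8) \subset B(x_0, 3r/4) \subset B(x_0,r) \subset \subset \Omega$ to line up, and tracking that the final balls $B_i$ stay inside $B(x_0,r)$, is routine once that step is in place.
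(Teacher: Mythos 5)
Your plan hinges on the claim that \emph{each} of the two components $V_1,V_2$ produced by Lemma \ref{lem_separation} must contain a point of $B(x_0,r)$ at distance $\geq C^{-1}r$ from $K$, and you propose to prove it by running the argument of Lemma \ref{lem_af_volume} on the piece $U$ of $V_i$ inside a ball, using that $\abs{U}$ is small. This step does not go through with the tools available at this stage, and it is the crux. In Lemma \ref{lem_af_volume} the small quantity is the volume of the \emph{complement} of a single component $V$, and the competitor $(K\setminus B(x_0,t))\cup(\partial B(x_0,t)\setminus V)$ is a topological competitor precisely because every crossing of the sphere avoiding $F$ must happen in the one connected set $V$, which is disjoint from $K$. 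In your situation the small set is $U$ itself; the analogous competitor $(K\setminus B)\cup(\partial B\cap \overline U)$ fails the topological condition, since a path may cross the sphere in any of the (possibly infinitely many) \emph{other} components, and you cannot conclude its endpoints are connected in $\Omega\setminus K$. To delete only the interface of $U$ with one chosen neighbouring component you would need the merging machinery (Remark \ref{rmk_merging}, or the covering construction of Lemma \ref{lem_infiltration}), the fact that interface points have full measure (Proposition \ref{prop_generic_points}), and local finiteness to select a neighbour carrying a definite share of $\partial U$ — all of which come later in the paper and depend on uniform rectifiability, hence on the very Condition B you are proving. Indeed, the statement you need is essentially Lemma \ref{lemV_volume} / Proposition \ref{propV_conditionB}, which the paper only obtains for $N=2,3$ after Theorem \ref{thm_finiteness}; Condition B itself is proved in all dimensions without it. Note also that the weaker, provable fact — that the $\delta r$-tube around $K$ has volume $\lesssim \delta r^N$, so \emph{some} point of $B(x_0,3r/4)$ is far from $K$ — does not suffice: all such deep points could a priori lie in a single component, which is exactly the difficulty.

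The paper's proof avoids committing to $V_1,V_2$ in advance. It takes a maximal $\varepsilon r$-net $(y_i)$ of $K\cap B(x_0,3r/2)$, uses Ahlfors regularity to bound its cardinality, and chooses by averaging a sphere $\partial B(x_0,t)$, $t\in(r/2,r)$, meeting only few of the balls $\overline B(y_i,2\varepsilon r)$, so that $Z:=\partial B(x_0,t)\cap\bigcup_i\overline B(y_i,2\varepsilon r)$ has $\HH^{N-1}(Z)\leq C\varepsilon r^{N-1}$ while every point of $\partial B(x_0,t)\setminus Z$ lies at distance $\geq \varepsilon r$ from $K$. If $\partial B(x_0,t)\setminus Z$ sat in a single component, then $K\cup Z\setminus B(x_0,t)$ would be a topological competitor (the Lemma \ref{lem_separation} argument), and quasiminimality would give $\HH^{N-1}(K\cap B(x_0,t))\leq C\varepsilon r^{N-1}$, contradicting the lower Ahlfors bound. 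Hence two points of the sphere, already $\varepsilon r$-far from $K$, lie in distinct components, and the two balls follow at once. If you want to keep your outline, you must either reproduce an argument of this type or find an independent proof of the per-component depth claim; as written, that claim is an unproven (and, before Condition B, unavailable) input.
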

\begin{proof}
    We let $C \geq 1$ denote a generic constant which depends only on $N$ and $M$.
    Let $\varepsilon \in (0,1)$ and let $(y_i)_{i \in I}$ denote a maximal family of points $y_i \in K \cap B(x_0,3r/2)$ that lie at distance $\geq \varepsilon r$ from each other. Since the balls $B(y_i,\varepsilon r /2)$ are disjoint, contained $B(x_0,2r)$ and since $K$ is Ahlfors-regular, we have on the one hand
    \begin{equation*}
        \HH^{N-1}(K \cap \bigcup_i B(y_i,\varepsilon r/2)) \geq \sum_i \HH^{N-1}(K \cap B(y_i,\varepsilon r/2)) \geq C^{-1} m \varepsilon^{N-1} r^{N-1},
    \end{equation*}
    where $m$ is the number of element of $I$, and on the other hand
    \begin{equation*}
        \HH^{N-1}(K \cap \bigcup_i B(y_i,\varepsilon r/2)) \leq \HH^{N-1}(K \cap B(x_0,2r)) \leq C r^{N-1}.
    \end{equation*}
    It follows that $m \leq C \varepsilon^{1-N}$. For $t \in (r/2,r)$, we let $m(t)$ denote the number of $i \in I$ such that $\overline{B}(y_i,2\varepsilon r)$ meets $\partial B(x_0,t)$. Then,
    \begin{equation*}
        \int_{r/2}^r m(t) \dd{t} \leq C \int_{r/2}^r \sum_i \mathbf{1}_{t - \varepsilon r \leq \abs{y_i - x_0} \leq t + \varepsilon r} \dd{t} \leq C m \varepsilon r.
    \end{equation*}
    which allows to find a radius $t \in (r/2,r)$ such that $m(t) \leq C \varepsilon^{2-N}$. Setting
    \begin{equation*}
        Z = \partial B(x_0,t) \cap \left(\bigcup_i \overline{B}(y_i,2 \varepsilon r)\right),
    \end{equation*}
    we can thus estimate $\HH^{N-1}(Z) \leq C \varepsilon r^{N-1}$. Let us show that if $\varepsilon$ is small enough (depending on $N$ and $M$) then $\partial B(x,t) \setminus Z$ cannot be contained in a single connected component of $\Omega \setminus K$. We proceed by contradiction and observe that if this is the case, then
    \begin{equation*}
        F = K \cup Z \setminus B(x_0,t)
    \end{equation*}
    is a topological competitor of $K$ in $B(x_0,r)$ (this is the same argument as in the Proof of Lemma \ref{lem_separation}). This yields by quasiminimality (\ref{eq_quasi})
    \begin{equation*}
        \HH^{N-1}(K \cap B(x_0,t)) \leq M \HH^{N-1}(Z) \leq C \varepsilon r^{N-1},
    \end{equation*}
    and contredicts to Ahlfors-regularity (Proposition \ref{prop_af}) for $\varepsilon$ chosen sufficiently small (depending on $N$ and $M$).
    We conclude that $\partial B(x_0,t) \setminus Z$ meets at least two components of $\Omega \setminus K$. It is left to observe that for all $x \in \partial B(x_0,t)$, one has
    \begin{equation*}
        \mathrm{dist}(x,K) \geq \varepsilon r.
    \end{equation*}
    Indeed, if there exists $y \in K$ such that $\abs{x - y} < \varepsilon r$, then $y \in K \cap B(x_0,3r/2)$ and by maximality of the family $(y_i)_i$, there exists $i$ such that $\abs{y - y_i} < \varepsilon r$, whence $\abs{x - y_i} < 2\varepsilon r$. This contredicts the fact that $x \notin Z$.
\end{proof}

\begin{prop}[Uniform rectifiability]
    There exists a constant $C \geq 1$ depending only on $M$ and $N$ such that the following holds.
    Let $K$ be a $M$-quasiminimal set in $\Omega$. For all $x_0 \in K$ and $r > 0$ such that $B(x_0,2r) \subset \Omega$, there exists exists a uniformly rectifiable set $E \subset \R^N$ with constant $C$ such that $K \cap B(x_0,r) \subset E$.
\end{prop}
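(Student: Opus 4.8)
The plan is to reduce to Theorem \ref{thm_BUR}: for the given ball $B(x_0,r)$ we will exhibit a single closed \emph{unbounded} set $E \subset \R^N$ with $K \cap B(x_0,r) \subset E$ which is lower Ahlfors-regular and satisfies Condition B at \emph{all} scales and all points, with constants depending only on $N$ and $M$; then Theorem \ref{thm_BUR} gives that $E$ is uniformly rectifiable with the desired constant. After translating and dilating (which preserves the class of uniformly rectifiable sets of a given constant) we may assume $x_0 = 0$, $r = 1$, so $B(0,2) \subset \Omega$, and we set
$$E := \big(K \cap \overline{B}(0,1)\big) \cup \partial B(0,1) \cup H,$$
where $H$ is a fixed hyperplane through the origin. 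The sphere $\partial B(0,1)$ prevents paths from escaping $B(0,1)$ and wrapping around, so that distinct components of $\Omega \setminus K$ inside $B(0,1)$ stay distinct in $\R^N \setminus E$; the hyperplane $H$ makes $E$ unbounded and splits the exterior of $\overline B(0,1)$ into two "fat" unbounded components, which is what keeps Condition B alive at large scales (a bounded set cannot satisfy Condition B at scales exceeding its diameter).

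The set $E$ is closed. It is Ahlfors-regular with constant $C(N,M)$: for $y \in K$ and $\rho \le 1/2$ the lower bound is Proposition \ref{prop_af} applied in $B(y,\rho) \subset \Omega$, while at larger scales or at points of $\partial B(0,1) \cup H$ it holds because $B(y,\rho)$ then contains a piece of $\partial B(0,1)$ or of $H$ of size comparable to $\rho$; the upper bound follows from Proposition \ref{prop_af} together with $\HH^{N-1}(B(y,\rho) \cap \partial B(0,1)) \le C \rho^{N-1}$ and $\HH^{N-1}(B(y,\rho) \cap H) \le C \rho^{N-1}$. For Condition B one argues by cases. If $y \in E$ and $\rho$ is small enough that $B(y,2\rho) \subset B(0,1)$ and $B(y,\rho)$ meets $K$, apply Proposition \ref{prop_conditionB} at a point of $K \cap B(y,\rho/2)$ to get two balls $B_1, B_2 \subset B(y,\rho)$ of radius $\ge C^{-1}\rho$ in distinct components of $\Omega \setminus K$; shrinking each by a dimensional factor we may assume they also avoid $H$ and $\partial B(0,1)$, hence avoid $E$, and they lie in distinct components of $\R^N \setminus E$, because a connecting path cannot leave $B(0,1)$ (the sphere lies in $E$) and so would join $B_1, B_2$ inside $B(0,1) \setminus (K \cup H) \subset \Omega \setminus K$, a contradiction. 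If instead $B(y,\rho)$ misses $K$, or if $\rho$ is large, then near $B(y,\rho)$ the set $E$ is locally a piece of $H$ and/or of $\partial B(0,1)$, and Condition B — including, at large scales, the splitting of the complement into two unbounded fat halves by $H$ — is elementary; here one uses that any path going from one side of $H$ to the other must cross $H \subset E$, whether or not it passes through $B(0,1)$.

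The main obstacle is the intermediate regime: points $y \in K$ close to $\partial B(0,1)$ together with scales $\rho$ comparable to $1$, where $B(y,2\rho)$ need not lie in $\Omega$ and Proposition \ref{prop_conditionB} is not directly applicable. There one instead works with a large ball $B(z,\rho') \subset B(y,\rho)$ well inside $B(0,1)$ (so $B(z,2\rho') \subset \Omega$), applying Proposition \ref{prop_conditionB} at a nearby point of $K$ if $B(z,\rho')$ meets $K$ and otherwise extracting the two required holes from the flat geometry of $\partial B(0,1)$. Apart from this point, the verification is a finite and routine case analysis built from Proposition \ref{prop_af}, Proposition \ref{prop_conditionB}, and the geometry of hyperplanes and spheres. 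Once the lower Ahlfors bound and Condition B are established for $E$, Theorem \ref{thm_BUR} yields that $E$ is uniformly rectifiable with a constant depending only on $N$ and $M$; undoing the dilation and translation and recalling $K \cap B(x_0,r) \subset E$ finishes the proof.
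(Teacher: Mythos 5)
Your construction is exactly the one the paper uses: $E = (K \cap \overline{B}(x_0,r)) \cup \partial B(x_0,r) \cup P$ for a hyperplane $P$ through the center, verified against Theorem~\ref{thm_BUR} via Proposition~\ref{prop_af} and Proposition~\ref{prop_conditionB} (the paper simply omits the case analysis you sketch). So the proposal is correct and follows essentially the same route as the paper's proof.
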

\begin{proof}
    Let $x_0 \in K$ and $r > 0$ be such that $B(x_0,2r) \subset \Omega$. Let $P$ by an hyperplane passing through $x_0$. Then the union $(K \cap \overline{B}(x_0,r)) \cup \partial B(x_0,r) \cup P$ satisfies the conditions of Theorem~\ref{thm_BUR}. We omit the details.
\end{proof}

Uniform rectifiability is not yet an optimal description of quasiminimal sets but is has many useful consequences, in particular it implies that the set is relatively flat in many balls. We are going to state this more precisely in Corollary \ref{cor_fl} but let us first recall the definiton of the flatness. For all $x_0 \in K$ and $r > 0$, the \emph{flatness} of $K$ in $B(x_0,r)$ is defined as
\begin{equation*}
    \beta_K(x_0,r) := r^{-1} \inf_P \sup_{y \in K \cap B(x_0,r)}{\rm dist }(y,P),
\end{equation*}
where the infimum is taken among all hyperplanes $P$ passing through $x_0$. The infimum is always attained for some hyperplan $P$ by compacity of the Grassmanian space $G(N-1,N)$. Thus, $\beta_K(x_0,r)$ is the smallest $\varepsilon > 0$ for which there exists an hyperplan $P$ passing through $x_0$ such that
\begin{equation*}
    K \cap B(x_0,r) \subset \set{\mathrm{dist}(\cdot,P) \leq \varepsilon r}.
\end{equation*}
When there is no ambiguity, we write $\beta(x_0,r)$ instead of $\beta_K(x_0,r)$.
A standard consequence of rectifiability and Ahlfors-regularity is that for $\HH^{N-1}$-a.e. $x_0 \in K$, we have
\begin{equation}\label{eq_limit_beta}
    \lim_{r \to 0} \beta_K(x_0,r) = 0.
\end{equation}
Quasiminimal sets satisfy a more quantitative variant of (\ref{eq_limit_beta}), called the Weak Geometric Lemma.
\begin{lem}[Weak Geometric Lemma]
    Let $K$ be a $M$-quasiminimal set in $\Omega$.
    For all $\varepsilon > 0$, there exists a constant $C = C(\tau) \geq 1$ (which depends only on $\varepsilon$, $N$, $M$) such that for all $x_0 \in K$ and for all $r > 0$ with $B(x_0,2r) \subset \Omega$, we have
    \begin{equation*}
        \int_{K \cap B(x_0,r)} \int_0^{r} \mathbf{1}_{\set{(y,t) | \beta_K(y,t) \geq \varepsilon}}(y,t) \frac{\dd{t}}{t} \dd{\HH^{N-1}(y)} \leq C r^{N-1}.
    \end{equation*}
\end{lem}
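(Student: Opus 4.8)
The plan is to deduce the Weak Geometric Lemma from the uniform rectifiability of $K$ established above, exactly as in \cite{DS98}. I would use as a black box the fact, due to David and Semmes (\cite{DS93}, see also \cite{DSBOOK}), that every uniformly rectifiable set $E \subset \R^N$ of dimension $N-1$ satisfies the Weak Geometric Lemma: for each $\varepsilon > 0$ there is a constant $C$, depending only on $\varepsilon$, $N$ and the uniform rectifiability constant of $E$, such that
\begin{equation*}
    \int_{E \cap B(z,s)} \int_0^{s} \mathbf{1}_{\set{\beta_E(w,\tau) \geq \varepsilon}} \frac{\dd{\tau}}{\tau} \dd{\HH^{N-1}(w)} \leq C s^{N-1} \qquad \text{for all $z \in E$ and $s > 0$,}
\end{equation*}
where $\beta_E$ is defined as $\beta_K$ with $K$ replaced by $E$. (The standard formulations use a $\beta$-number which does not force the approximating hyperplane through the centre of the ball; since $z \in E$, such a plane may be translated through $z$ at the cost of a factor $2$ in $\beta$, so the centred version stated here follows as well.)

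Next comes a comparison step. Fix $x_0 \in K$ and $r > 0$ with $B(x_0,2r) \subset \Omega$, and let $E$ be a uniformly rectifiable set with constant $C = C(N,M)$ such that $K \cap B(x_0,r) \subset E$; note that $x_0 \in E$. If $w \in K$ and $\tau > 0$ satisfy $B(w,\tau) \subset B(x_0,r)$, then $K \cap B(w,\tau) \subset E$, and since $w \in E$ every hyperplane through $w$ admissible in the definition of $\beta_E(w,\tau)$ is also admissible for $\beta_K(w,\tau)$; hence $\beta_K(w,\tau) \leq \beta_E(w,\tau)$ and
\begin{equation*}
    \mathbf{1}_{\set{\beta_K(w,\tau) \geq \varepsilon}} \leq \mathbf{1}_{\set{\beta_E(w,\tau) \geq \varepsilon}}.
\end{equation*}
Restricting to $w \in K \cap B(x_0,r/2)$ and $\tau \in (0,r/2)$ — so that indeed $B(w,\tau) \subset B(x_0,r)$ — and using $\HH^{N-1} \mres (K \cap B(x_0,r/2)) \leq \HH^{N-1} \mres (E \cap B(x_0,r))$ together with the Weak Geometric Lemma for $E$ at $z = x_0$, $s = r$, we obtain
\begin{equation*}
    \int_{K \cap B(x_0,r/2)} \int_0^{r/2} \mathbf{1}_{\set{\beta_K(w,\tau) \geq \varepsilon}} \frac{\dd{\tau}}{\tau} \dd{\HH^{N-1}(w)} \leq C r^{N-1}.
\end{equation*}
Equivalently, there is $C = C(\varepsilon,N,M)$ such that the estimate of the Lemma, with outer radius $s$, holds whenever $x_0 \in K$ and $B(x_0,4s) \subset \Omega$.

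It then remains to remove this loss in the radius, which is routine. Let $x_0 \in K$ and $r > 0$ with $B(x_0,2r) \subset \Omega$. The scales $\tau \in [r/8,r)$ contribute at most $\log(8)\,\HH^{N-1}(K \cap B(x_0,r)) \leq C r^{N-1}$ by the Ahlfors upper bound of Proposition \ref{prop_af}. For the scales $\tau \in (0,r/8)$, pick a maximal $(r/20)$-separated subset $(x_j)_{j \in J}$ of $K \cap B(x_0,r)$; then the balls $B(x_j,r/8)$ cover $K \cap B(x_0,r)$, the balls $B(x_j,r/40)$ are disjoint and contained in $B(x_0,2r)$, so $\#J \leq C(N,M)$ by Ahlfors-regularity. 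Since $B(x_j,r/2) \subset B(x_0,2r) \subset \Omega$, the previous paragraph applies at each $B(x_j,r/8)$ and bounds the integral over $K \cap B(x_j,r/8)$, at scales below $r/8$, by $C r^{N-1}$; summing over $j \in J$ and using $K \cap B(x_0,r) \subset \bigcup_{j} B(x_j,r/8)$ gives the remaining contribution $\leq C r^{N-1}$. Adding the two pieces proves the Lemma.

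The only genuinely nontrivial ingredient here is the black-box implication ``uniformly rectifiable $\Rightarrow$ Weak Geometric Lemma'' for the set $E$, a deep result of David and Semmes; everything else is elementary bookkeeping, the one subtlety being that our rectifiability statement provides a parametrized surface containing $K$ only inside a fixed ball, which is what forces the covering argument of the last paragraph. I do not anticipate any real obstacle beyond correctly invoking this equivalence.
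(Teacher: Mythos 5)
Your proposal is correct and follows the same route the paper intends: the lemma is stated there without proof precisely because it is the standard David--Semmes consequence of uniform rectifiability (\cite{DS93}), applied to the uniformly rectifiable set $E \supset K \cap B(x_0,r)$ from the preceding proposition. Your comparison $\beta_K \leq \beta_E$, the factor-$2$ adjustment for centred planes, and the covering argument removing the radius loss are exactly the routine localization bookkeeping needed, and they are carried out correctly.
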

This means that there are many balls where $\beta_K(y,t) < \varepsilon$. The Weak Geometric Lemma has the following consequence: in any ball, one can find a smaller ball (but not too much smaller) with a shifted center where the flatness is small.
\begin{cor}\label{cor_fl}
    Let $K$ be a $M$-quasiminimal set in $\Omega$.
    For all $\varepsilon > 0$, there exists a constant $C = C(\varepsilon) \geq 1$ (which depends only on $N$, $M$ and $\varepsilon$) such that for all $x_0 \in K$ and for all $r > 0$ with $B(x_0,r) \subset \Omega$, there exists $y \in K \cap B(x_0,r/2)$ and $t \in (C^{-1}r,r/2)$ satisfying
    \begin{equation*}
        \beta_K(y,t) \leq \varepsilon.
    \end{equation*}
\end{cor}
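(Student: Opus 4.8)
The plan is to derive Corollary \ref{cor_fl} from the Weak Geometric Lemma by a straightforward averaging (pigeonhole) argument. Fix $\varepsilon > 0$ and $x_0 \in K$, $r > 0$ with $B(x_0,r) \subset \Omega$. Without loss of generality I will work with the ball $B(x_0,r/2)$, whose double $B(x_0,r)$ is contained in $\Omega$, so the Weak Geometric Lemma applies with $r$ replaced by $r/2$. It gives
\begin{equation*}
    \int_{K \cap B(x_0,r/2)} \int_0^{r/2} \mathbf{1}_{\set{\beta_K(y,t) \geq \varepsilon}}(y,t) \, \frac{\dd{t}}{t} \dd{\HH^{N-1}(y)} \leq C_1 r^{N-1},
\end{equation*}
where $C_1 = C_1(\varepsilon, N, M)$.

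Next I would pick a constant $\Lambda = \Lambda(\varepsilon, N, M) > 1$, to be fixed, and restrict the inner integral to the dyadic-type window $t \in (\Lambda^{-1} r, r/2)$. On this window $\int_{\Lambda^{-1} r}^{r/2} \frac{\dd{t}}{t} = \log(\Lambda/2)$, so if for \emph{every} $y \in K \cap B(x_0,r/2)$ the flatness satisfied $\beta_K(y,t) \geq \varepsilon$ for \emph{all} $t$ in this window, the double integral would be at least $\log(\Lambda/2) \cdot \HH^{N-1}(K \cap B(x_0,r/2))$. By the lower Ahlfors bound (Proposition \ref{prop_af}), $\HH^{N-1}(K \cap B(x_0,r/2)) \geq C_2^{-1} r^{N-1}$, so this lower bound is at least $C_2^{-1} \log(\Lambda/2) \, r^{N-1}$. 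Choosing $\Lambda$ large enough that $C_2^{-1} \log(\Lambda/2) > C_1$ yields a contradiction. Hence there must exist some $y \in K \cap B(x_0,r/2)$ and some $t \in (\Lambda^{-1} r, r/2)$ with $\beta_K(y,t) < \varepsilon$ (in fact $\le \varepsilon$), which is the claim with $C = \Lambda$.

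Actually, to be careful about the direction of the quantifiers: what the averaging really shows is that the set of pairs $(y,t) \in (K \cap B(x_0,r/2)) \times (\Lambda^{-1}r, r/2)$ with $\beta_K(y,t) \ge \varepsilon$ cannot have full measure with respect to $\dd{\HH^{N-1}(y)} \otimes \frac{\dd t}{t}$; since the full measure of this product domain is $\log(\Lambda/2) \cdot \HH^{N-1}(K \cap B(x_0,r/2)) \ge C_2^{-1}\log(\Lambda/2) r^{N-1} > C_1 r^{N-1}$ once $\Lambda$ is large, the complement is nonempty, giving the desired $(y,t)$. I do not expect any genuine obstacle here; the only mild subtleties are bookkeeping the factor of $2$ between $r$ and $r/2$ (so that the Weak Geometric Lemma's hypothesis $B(\cdot,2\rho) \subset \Omega$ is met) and noting that the constant $C$ produced depends only on $N$, $M$, and $\varepsilon$, exactly as the Weak Geometric Lemma's constant does. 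One could equally phrase the argument dyadically, summing $\mathbf{1}_{\beta \ge \varepsilon}$ over scales $t \in (2^{-k-1}r, 2^{-k}r)$ for $k = 1, \dots, K_0$ with $K_0 \sim \log_2 \Lambda$, but the continuous averaging above is cleanest.
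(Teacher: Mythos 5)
Your argument is correct and is essentially the paper's own proof: a contradiction/pigeonhole argument combining the Weak Geometric Lemma bound on $B(x_0,r/2)$ with the lower Ahlfors-regularity bound $\HH^{N-1}(K \cap B(x_0,r/2)) \geq C^{-1} r^{N-1}$, choosing the scale window long enough (in $\dd{t}/t$ measure) to exceed the Weak Geometric Lemma constant. The bookkeeping of the factor $2$ and the dependence of $C$ only on $N$, $M$, $\varepsilon$ are handled as the paper intends.
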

\begin{proof}
    This is proved by contradiction using the fact that
    \begin{equation*}
        \int_{K \cap B(x_0,r/2)} \int_{0}^{r/2} \mathbf{1}_{\beta(y,t) \geq \varepsilon}(y,t) \frac{\dd{t}}{t} \dd{\HH^{N-1}(y)} \leq C_0 r^{N-1}.
    \end{equation*}
    and that $\HH^{N-1}(K \cap B(x_0,r/2)) \geq C^{-1} r^{N-1}$.
\end{proof}

We conclude this section by stating a consequence of Proposition \ref{prop_conditionB} which will be useful in later sections.
\begin{cor}\label{lem_plane_separation}
    There exists a constant $\varepsilon_0 \in (0,1/2)$ which depends on $N$, $M$ such that the following property holds true.
    Let $K$ be a $M$-quasiminimal set in $\Omega$. Let $x_0 \in K$, $r > 0$ be such that $B(x_0,r) \subset \Omega$ and for which there exists some hyperplane $P$ passing through $x_0$ such that
    \begin{equation}\label{eq_plane_separation}
        K \cap B(x_0,r) \subset \set{x \in B(x_0,r) | \mathrm{dist}(x,P) \leq \varepsilon_0 r}.
    \end{equation}
    Then the two components of $\set{x \in B(x_0,r) | \mathrm{dist}(x,P) > \varepsilon_0 r}$ lie in in distinct connected components of $\Omega \setminus K$.
\end{cor}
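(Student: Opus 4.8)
The argument is a contradiction of the same flavour as the proofs of Lemma~\ref{lem_separation}, Lemma~\ref{lem_af_volume} and Proposition~\ref{prop_conditionB}: if the two ``caps'' on the two sides of $P$ were to lie in the same component of $\Omega\setminus K$, one could cap off a sphere inside $B(x_0,r)$ with a thin belt of the slab and obtain a topological competitor with very little extra area, contradicting Ahlfors-regularity. Fix a unit normal $n$ to $P$, so that $P=\{x:(x-x_0)\cdot n=0\}$, and write $S:=\{x:\mathrm{dist}(x,P)\le\varepsilon_0 r\}=\{x:|(x-x_0)\cdot n|\le\varepsilon_0 r\}$. The two components of $\{x\in B(x_0,r):\mathrm{dist}(x,P)>\varepsilon_0 r\}$ are $U^{\pm}:=B(x_0,r)\cap\{\pm(x-x_0)\cdot n>\varepsilon_0 r\}$; each is the intersection of a ball with an open half-space, hence convex, and nonempty since $\varepsilon_0<1$. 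By \eqref{eq_plane_separation} we have $K\cap B(x_0,r)\subset S$, so $U^{\pm}\cap K=\varnothing$ and each $U^{\pm}$ lies in a single connected component $V^{\pm}$ of $\Omega\setminus K$. We must show $V^{+}\ne V^{-}$, so assume for contradiction that $V^{+}=V^{-}=:V$.

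Put $t:=r/2$, so that $\varepsilon_0 r<t<r$ (using $\varepsilon_0<1/2$), and set
\[
    F:=\big(K\setminus B(x_0,t)\big)\cup\big(\partial B(x_0,t)\cap S\big),
\]
a relatively closed subset of $\Omega$ which coincides with $K$ outside $\overline{B}(x_0,t)$. Since $\partial B(x_0,t)\subset B(x_0,r)$ and $K\cap B(x_0,r)\subset S$, we have $K\cap\partial B(x_0,t)\subset S$, hence
\[
    \partial B(x_0,t)\setminus F=\partial B(x_0,t)\setminus S=C^{+}\cup C^{-},\qquad C^{\pm}:=\partial B(x_0,t)\cap\{\pm(x-x_0)\cdot n>\varepsilon_0 r\},
\]
where $C^{\pm}$ are two disjoint nonempty spherical caps with $C^{\pm}\subset U^{\pm}\subset V$. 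Then $F$ is a topological competitor of $K$ in $\overline{B}(x_0,t)$: given $x,y\in\Omega\setminus(\overline{B}(x_0,t)\cup K)$ joined by a path in $\Omega\setminus F$, either the path avoids $\partial B(x_0,t)$ — and then it stays in the region where $F=K$, so $x,y$ are joined in $\Omega\setminus K$ — or its intersection with $\partial B(x_0,t)$ is contained in $C^{+}\cup C^{-}\subset V$, and following the path from $x$ to its first meeting point with $\partial B(x_0,t)$ and from $y$ to its last one (both subpaths stay outside $\overline{B}(x_0,t)$, where $F=K$) shows $x,y\in V$, hence joined in $\Omega\setminus K$. This is the only place where the contradiction hypothesis $V^{+}=V^{-}$ is used, and it is the crux of the argument.

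To conclude, $K\setminus F=K\cap B(x_0,t)$ while $F\setminus K\subset\partial B(x_0,t)\cap S$, which is the belt of the sphere $\partial B(x_0,t)$ (of radius $t\le r$) within distance $\varepsilon_0 r\le t$ of the diametral hyperplane $P$, so $\HH^{N-1}(F\setminus K)\le C\varepsilon_0 r^{N-1}$ with $C=C(N)$. Applying quasiminimality \eqref{eq_quasi} (using the Remark allowing competitors in $\overline{B}(x_0,t)$) gives
\[
    \HH^{N-1}(K\cap B(x_0,t))=\HH^{N-1}(K\setminus F)\le M\,\HH^{N-1}(F\setminus K)\le CM\varepsilon_0 r^{N-1},
\]
which contradicts the lower Ahlfors bound $\HH^{N-1}(K\cap B(x_0,t))\ge C^{-1}r^{N-1}$ of Proposition~\ref{prop_af} once $\varepsilon_0$ is chosen smaller than a constant $\varepsilon_0(N,M)\in(0,1/2)$. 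Hence $V^{+}\ne V^{-}$, as claimed. There is no serious obstacle here: the construction is a geometric variant of competitor arguments already carried out in the paper, and the only points requiring a little care are the dependence of the topological-competitor property on the hypothesis $V^{+}=V^{-}$ and the fact that $\HH^{N-1}(\partial B(x_0,t)\cap S)\lesssim\varepsilon_0 r^{N-1}$ rather than merely $\lesssim r^{N-1}$, which is exactly what makes $\varepsilon_0$ small enough to beat the Ahlfors constant.
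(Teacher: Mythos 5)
Your proof is correct, but it takes a different route from the paper. The paper deduces the corollary almost immediately from Proposition~\ref{prop_conditionB} (condition B): that proposition supplies two balls $B_1,B_2\subset B(x_0,r/2)$ of radius $\geq C^{-1}r$ lying in distinct components $V,W$ of $\Omega\setminus K$; once $\varepsilon_0$ is small enough that no such ball fits inside the slab, each ball must meet one of the two caps, and since the caps are connected sets disjoint from $K$, one cap lies in $V$ and the other in $W$ (they cannot both meet the same cap, as that would force $V=W$). You instead argue by contradiction with a fresh competitor: assuming both caps lie in one component, you close off $\partial B(x_0,r/2)$ by the thin belt $\partial B(x_0,r/2)\cap S$, verify the topological-competitor property exactly as in Lemmas~\ref{lem_separation} and~\ref{lem_af_volume}, and contradict the lower Ahlfors bound of Proposition~\ref{prop_af} since the belt has measure $\lesssim\varepsilon_0 r^{N-1}$. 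Your verification of the competitor property and the area estimates are sound, the restriction $\varepsilon_0<1/2$ is used correctly to keep the caps of $\partial B(x_0,r/2)$ nonempty, and there is no circularity since only Proposition~\ref{prop_af} is invoked. What each approach buys: the paper's argument is shorter because it reuses the heavier net-and-radius-selection work already done in Proposition~\ref{prop_conditionB}, whereas yours is self-contained modulo Ahlfors-regularity and is essentially a streamlined special case of the same competitor mechanism (the set $Z$ on the sphere is here simply the belt), so it would survive even if condition B had not been established first.
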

\begin{figure}[ht]
\centerline{\includegraphics[width=0.37\linewidth]{./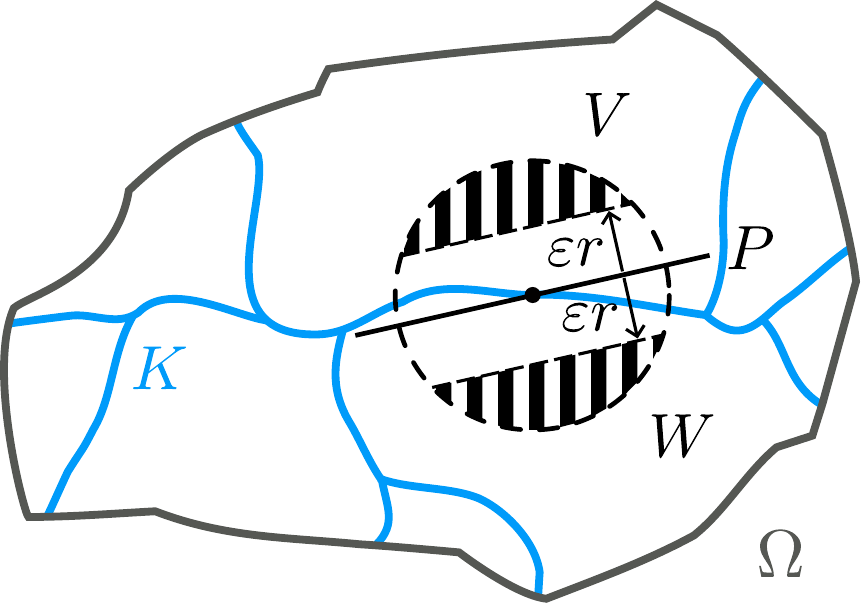}}
\caption{An illustration for Corollary~\ref{lem_plane_separation}.}
\end{figure}
\begin{proof}
    We let $C \geq 1$ denote a generic constant which depends only on $N$ and $M$.
    According to Proposition \ref{prop_conditionB}, there exists two balls $B_1, B_2 \subset B(x_0,r/2)$ which are of radius $\geq C^{-1} r$ and which lie in distinct connected component of $\Omega \setminus K$, say $V$ and $W$. Taking $\varepsilon_0$ small enough in (\ref{eq_plane_separation}), none of these two balls can be contained in the strip
    \begin{equation*}
        \set{x \in B(x_0,r) | \mathrm{dist}(x,P) \leq \varepsilon_0 r}.
    \end{equation*}
    Since the two components $\set{x \in B(x_0,r) | \mathrm{dist}(x,P) > \varepsilon_0 r}$ are connected subset of $\Omega \setminus K$, one must be in $V$ and the other in $W$.

\end{proof}

\subsection{No isolated components}

\begin{prop}\label{prop_isolated}
    Let $K$ be a $M$-quasiminimal set in $\Omega$. For all $x_0 \in K$ and for all $r > 0$ such that $B(x_0,r) \subset \subset \Omega$, there is not connected component $V$ of $\Omega \setminus K$ such that $V \subset B(x_0,r)$.
\end{prop}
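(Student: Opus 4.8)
Suppose, for contradiction, that there is a connected component $V$ of $\Omega \setminus K$ with $V \subset B(x_0,r) \subset\subset \Omega$. Then $\overline{V}$ is compact and contained in $B(x_0,r)$, so there is $\rho < r$ with $\overline{V} \subset B(x_0,\rho)$ and $\overline{B}(x_0,\rho) \subset \Omega$. The plan is to produce a \emph{relatively closed} set $F \subseteq K$ which is a topological competitor of $K$ in the \emph{large} ball $B(x_0,\rho)$ and satisfies $\HH^{N-1}(K \setminus F) > 0$; since $\HH^{N-1}(F \setminus K) = 0$ this contradicts \eqref{eq_quasi}. The key point is that, because $V \subset B(x_0,\rho)$, any modification of $K$ that only ``merges $V$ with a neighbouring component'' is automatically an admissible topological competitor in $B(x_0,\rho)$: every pair of points separated by $K$ but not by $F$ would have to contain a point of $V$, hence a point inside $B(x_0,\rho)$, which is excluded by the definition \eqref{eq_topological}.

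\paragraph{Construction of $F$.} As in the Remark after Lemma~\ref{lem_separation}, $V$ has locally finite perimeter and $\partial^* V \subseteq K \cap \overline{B}(x_0,\rho)$, so $\HH^{N-1}(\partial^* V) < \infty$; moreover $\HH^{N-1}(\partial^* V) > 0$, for otherwise $\mathbf 1_V$ would be locally constant on the component of $\Omega$ containing $x_0$, forcing $\Omega \setminus K$ to have a single component there and contradicting Lemma~\ref{lem_separation} at $x_0$. Now pick a point $p \in \partial^* V$ which is, at the same time, a point where $K$ has a unique approximate tangent plane $P$ with $\lim_{s\to 0}\beta_K(p,s) = 0$ (true for $\HH^{N-1}$-a.e.\ $p \in K$, hence for $\HH^{N-1}$-a.e.\ $p \in \partial^* V$) and a point at which the blow-up of $V$ is a half-space. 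For $s$ small, $K \cap B(p,s)$ lies in an arbitrarily thin slab around $P$, and Corollary~\ref{lem_plane_separation} shows that the two ``fat sides'' $B(p,s) \setminus \{\mathrm{dist}(\cdot,P) \le \varepsilon_0 s\}$ lie in two distinct components; a short connectedness argument (sliding radially along $P^\perp$) shows these two components do not depend on $s$, and a volume count using $\HH^{N-1}(\partial^* V \cap B(p,s)) > 0$ identifies one of them with $V$; call the other one $W$. Assuming the ball $B(p,s/2)$ meets \emph{only} $V$ and $W$ (see the obstacle below) and that $s$ is small enough that $B(p,s/2) \subset B(x_0,\rho)$, set
\begin{equation*}
    F := K \setminus B(p,s/2) = K \cap \big(\Omega \setminus B(p,s/2)\big).
\end{equation*}
Then $F$ is relatively closed, $F \setminus B(x_0,\rho) = K \setminus B(x_0,\rho)$, and the connected components of $\Omega \setminus F$ are exactly those of $\Omega \setminus K$ except that $V$ and $W$ are fused into one (through $B(p,s/2) \subseteq \Omega \setminus F$). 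Hence $F$ is a topological competitor of $K$ in $B(x_0,\rho)$ by the discussion above.

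\paragraph{Conclusion and main obstacle.} Since $p \in K$, Ahlfors-regularity (Proposition~\ref{prop_af}) gives $\HH^{N-1}(K\setminus F) = \HH^{N-1}(K \cap B(p,s/2)) \ge C^{-1}(s/2)^{N-1} > 0 = M\,\HH^{N-1}(F\setminus K)$, contradicting \eqref{eq_quasi}; this will finish the proof. The delicate step is the claim that a small enough ball $B(p,s/2)$ around the good point $p$ meets only the two components $V$ and $W$, i.e.\ that no third component $V_k$ is squeezed into the thin slab containing $K \cap B(p,s/2)$. I expect this to be the main obstacle: one controls $\sum_{k}|V_k \cap B(p,s/2)| = o(s^N)$ using the relative isoperimetric inequality (as in Lemma~\ref{lem_af_volume} and Proposition~\ref{prop_af}), the thinness of the slab, and the fact that $p$ is a density point of $\partial^* V$ in $K$, but turning this into the exact statement ``no third component'' requires a genuine two-sidedness argument of the type developed later for generic points of $K$. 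An equivalent way to package the same difficulty is to instead take $F := \overline{K \setminus (\partial^* V \cap \partial^* W)}\cap\Omega$, which is always relatively closed and merges $V$ with $W$; then one must show $\HH^{N-1}(K\setminus F) = \HH^{N-1}\big(\mathrm{int}_K(\partial^* V \cap \partial^* W)\big) > 0$, which again amounts to exhibiting a genuinely two-sided piece of the $V/W$ interface. Either way, once that local two-sidedness is in hand the argument above closes.
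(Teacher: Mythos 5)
Your setup (working in the big ball, exploiting that points outside $B(x_0,r)$ can never lie in $V$ because $V \subset B(x_0,r)$) is exactly the right idea, but the proof as written has a genuine gap, and it is precisely the one you flag: for your competitor $F = K \setminus B(p,s/2)$, deleting the ball merges \emph{every} component of $\Omega \setminus K$ that meets $B(p,s/2)$, not just $V$ and $W$. If a third component $U$ infiltrates the thin slab and both $W$ and $U$ reach outside $B(x_0,\rho)$, then outside points of $W$ and $U$ become connected through the ball without ever touching $V$, so the topological condition \eqref{eq_topological} fails; your ``key point'' paragraph is only valid under the unproven hypothesis that $B(p,s/2)$ meets only $V$ and $W$. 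That no-infiltration statement is not available at this stage of the paper: it is essentially Lemma \ref{lem_infiltration} (and Proposition \ref{prop_plane_separation}), proved later by a nontrivial covering/merging argument, so leaving it as an acknowledged ``obstacle'' means the proof is not closed. The alternative competitor $\overline{K \setminus (\partial^* V \cap \partial^* W)} \cap \Omega$ runs into the same unresolved two-sidedness issue, as you note.

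The paper's proof avoids this difficulty entirely by a slightly different competitor: at a point $x_1 \in \partial V$ where $K \cap \overline{B}(x_1,t)$ lies in a thin slab around a hyperplane $P$ and where $\abs{V \cap B(x_1,t)} \geq C_0^{-1} t^N$, it takes $F = \bigl(K \setminus B(x_1,t)\bigr) \cup Z$ with $Z = \set{x \in \partial B(x_1,t) \mid \mathrm{dist}(x,P) \leq \varepsilon t}$. Adding $Z$ forces any path from outside to enter the ball through one of the two spherical caps $Z_1, Z_2 = \partial B(x_1,t)\setminus Z$, which are connected and disjoint from $K$; the volume count shows one cap, say $Z_2$, is contained in $V$, and since the outside points $x,y$ are not in $V$, both must connect to $Z_1$, so they stay connected in $\Omega \setminus K$ \emph{no matter how many other components infiltrate the slab}. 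The price is $\HH^{N-1}(F \setminus K) \leq C\varepsilon t^{N-1}$ instead of $0$, but quasiminimality \eqref{eq_quasi} then gives $\HH^{N-1}(K \cap B(x_1,t)) \leq CM\varepsilon t^{N-1}$, contradicting Ahlfors-regularity for $\varepsilon$ small. If you adopt this ``cap-blocking'' competitor, your argument closes without any appeal to the later infiltration machinery.
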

\begin{proof}
    Let us assume that there exists such a connected component $V$. We know that $V$ is a set of finite perimeter with $\partial^* V \subset K$ and that there exists a universal constant $C_0 \geq 1$ such that for $\HH^{N-1}$-a.e. $x \in \partial^* V$, we have
    \begin{equation*}
        \liminf_{t \to 0} \abs{V \cap B(x,r)} \geq C_0^{-1} t^N.
    \end{equation*}
    We also know by rectifiability and Ahlfors-regularity of $K$ that for $\HH^{N-1}$-a.e. $x \in K$, we have
    \begin{equation*}
        \lim_{t \to 0} \beta(x,t) = 0.
    \end{equation*}
    We let $\varepsilon > 0$ which will be chosen small enough later (depending on $N$ and $M$).
    As $V$ is a non-empty open set such that $V \subset \subset B(x_0,r)$, we have $\HH^{N-1}(\partial^* V \cap B(x_0,t)) > 0$. This allows to select a point $x_1 \in \partial V$ and a small radius $t > 0$ such that $B(x_1,t) \subset B(x_0,r)$,
    \begin{equation*}
        \abs{V \cap B(x_1,t)} \geq C_0^{-1} t^N
    \end{equation*}
    and such that there exists an hyperplane $P$ passing through $x_1$ satisfying
    \begin{equation*}
        K \cap \overline{B}(x_1,t) \subset \overline{B}(x_1,t) \cap \set{\mathrm{dist}(\cdot,P) \leq \varepsilon t}.
    \end{equation*}
    We choose $\varepsilon$ small enough (depending on $N$) so that
    \begin{equation*}
        \abs{\overline{B}(x_1,t) \cap \set{\mathrm{dist}(\cdot,P) \leq \varepsilon r}} < C_0^{-1} t^n.
    \end{equation*}
    Thus, it is not possible that
    \begin{equation*}
        V \cap B(x_1,t) \subset \set{\mathrm{dist}(\cdot,P) \leq \varepsilon t}
    \end{equation*}
    and this guarantees that one of the components of $\overline{B}(x_1,t) \cap \set{\mathrm{dist}(\cdot,P) > \varepsilon t}$ must be contained in $V$.
    Now, we set
    \begin{equation*}
        F = \left(K \setminus B(x_1,t)\right) \cup Z,
    \end{equation*}
    where
    \begin{equation*}
        Z := \set{x \in \partial B(x_1,t) | \mathrm{dist}(y,P) \leq \varepsilon t}.
    \end{equation*}
    and we check that $F$ is a topological competitor of $K$ in $B(x_0,r)$ (not $B(x_1,t)$).
\begin{figure}[ht]
\begin{center}
\includegraphics[width=0.28\linewidth]{./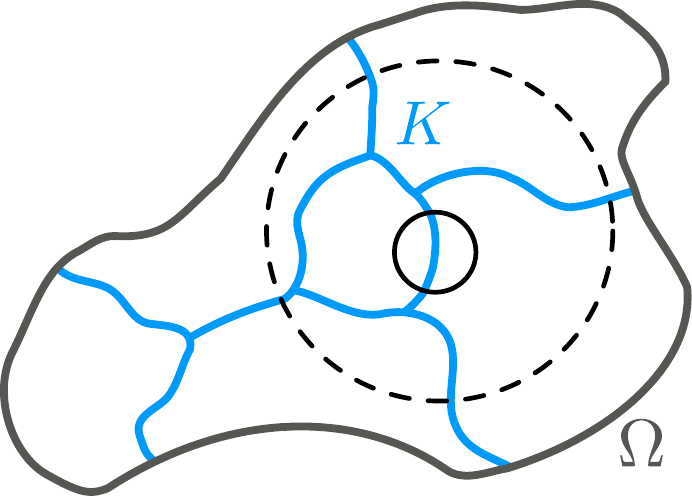}
\quad \quad
\includegraphics[width=0.28\linewidth]{./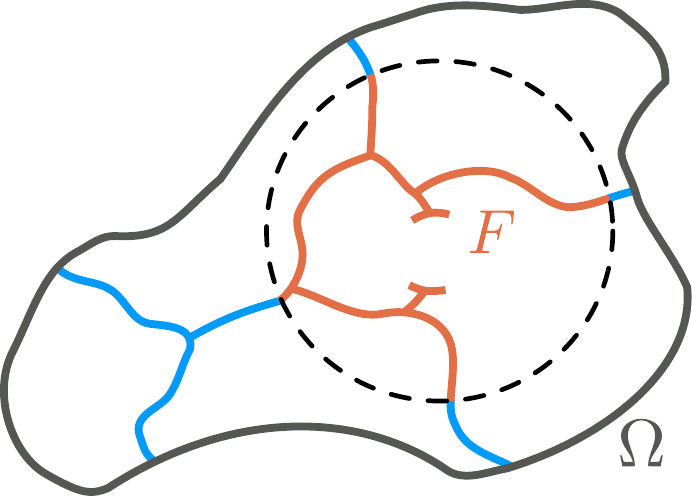}
\end{center}
\caption{The competitor in Proposition~\ref{prop_isolated}.}
\end{figure}
    We consider two points $x$ and $y$ in $\Omega \setminus (\overline{B}(x_0,r) \cup K)$ which are connected by a path in $\Omega \setminus F$ and we want to show that $x$ and $y$ are connected in $\Omega \setminus K$.
    If the path does not meet $\partial B(x_1,t)$, then it stays outside of $\overline{B}(x_1,t)$, where $F$ and $K$ coincide so $x$ and $y$ are connected in $\Omega \setminus K$. If the path meets $\partial B(x_1,t)$, it can only be at a point of $\partial B(x_1,t) \setminus Z$
    We observe that $\partial B(x_1,t) \setminus Z$ is composed of two spherical caps, denoted by $Z_1$, $Z_2$, which are disjoint from $K$.
    We consider the portion of the path which starts from $x$ and meets $\partial B(x_1,t)$ for the first time. This portion is disjoint from $K$ so $Z_1$ must be contained in the same connected component of $\Omega \setminus K$ as $x$. As $x \notin V$, then $Z_1$ is contained in a connected component of $\Omega \setminus K$ which differs from $V$ and this forces $Z_2 \subset V$.
    Similarly, we consider the portion of the path which leaves $\partial B(x_1,t)$ for the last time and arrives at $y$. This portion is disjoint from $K$ and as $y \notin V$, it connects necessarily $y$ to $Z_1$. We conclude that $x$ and $y$ are connected in $\Omega \setminus K$ à $Z_1$ and as $Z_1$ is a connect dpart of $\Omega \setminus K$, the points $x$ and $y$ area also connected in $\Omega \setminus K$. We conclude that $F$ is a topological competitor of $K$ in $B(x_0,r)$.
    We finally apply the quasiminimality property (\ref{eq_quasi}) in $B(x_0,r)$,
    \begin{equation*}
        \HH^{N-1}(K \setminus F) \leq M \HH^{N-1}(F \setminus K),
    \end{equation*}
    so
    \begin{equation*}
        \HH^{N-1}(K \cap B(x_1,t)) \leq M \HH^{N-1}(Z) \leq C M \varepsilon t^{N-1}.
    \end{equation*}
    This contradicts the Ahlfors-regularity of $K$ if $\varepsilon_0$ is chosen small enough.
\end{proof}

\section{Interface points}\label{section_generic}

\begin{defi}
    Let $K$ be a $M$-quasiminimal set in $\Omega$.
    We define the set of \emph{interface points} $K^*$ as the set of points $x_0 \in K$ for which there exists a radius $r > 0$ such that $B(x_0,r) \subset \Omega$ and $B(x_0,r) \setminus K$ meets exactly two components of $\Omega \setminus K$.
\end{defi}
When $B(x_0,r) \setminus K$ meets exactly two components of $\Omega \setminus K$, Lemma \ref{lem_separation} shows that this is also the case of $B(x_0,t) \setminus K$ for all $0 < t < r$. We deduce that for all $x \in K^*$, there exists a unique pair of components $V$, $W$ of $\Omega \setminus K$ such that $x \in \partial V \cap \partial W$. One can justify similarly that $K^*$ is a relative open subset of $K$.
The main goal of this section is to prove the following Proposition.
\begin{prop}\label{prop_generic_points}
    Let $K$ be a $M$-quasiminimal set in $\Omega$, then $\HH^{N-1}(K \setminus K^*) = 0$ and for all component $V$ of $\Omega \setminus K$,
    \begin{equation*}
        \HH^{N-1}(\Omega \cap \partial V \setminus \partial^* V) = 0. 
    \end{equation*}
\end{prop}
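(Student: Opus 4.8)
The plan is to prove the two assertions together by showing that for $\HH^{N-1}$-a.e.\ $x_0 \in K$, the point $x_0$ lies in $\partial^* V$ for exactly two components $V$ of $\Omega \setminus K$, and that at such a point $x_0$ belongs to $K^*$. The starting point is the set $G$ of points $x_0 \in K$ that are good in the following sense: the flatness tends to zero, $\lim_{r \to 0}\beta_K(x_0,r) = 0$ (which holds $\HH^{N-1}$-a.e.\ by rectifiability and Ahlfors-regularity, see \eqref{eq_limit_beta}), and moreover the density $\lim_{r \to 0} r^{1-N}\HH^{N-1}(K \cap B(x_0,r))$ exists and equals the density of the tangent plane, i.e.\ the density of $K$ is $1$ there (true $\HH^{N-1}$-a.e.\ by rectifiability). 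It suffices to show $G \subset K^*$ and that every $x_0 \in G$ lies on exactly two reduced boundaries, since then $\HH^{N-1}(K \setminus K^*) = 0$, and by the coarea/partition estimate \eqref{eq_ViKn} the set $\Omega \cap \partial V \setminus \partial^* V$ is $\HH^{N-1}$-null for each $V$ (because $\HH^{N-1}$-a.e.\ point of $K$ is in $\partial^* V$ for exactly two components, so $\partial V \subset K$ can only exceed $\partial^* V$ on a null set).

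The geometric core is the following: fix $x_0 \in G$ and let $\varepsilon_0$ be the constant of Corollary~\ref{lem_plane_separation}. For $r$ small enough, $\beta_K(x_0,r) \leq \varepsilon_0$, so by Corollary~\ref{lem_plane_separation} the two slabs $\set{x \in B(x_0,r) \mid \mathrm{dist}(x,P) > \varepsilon_0 r}$ (with $P$ the optimal hyperplane) lie in two distinct components $V$, $W$ of $\Omega \setminus K$. This already gives that $B(x_0,r)$ meets at least two components; the remaining point is to rule out a third. Suppose a third component $U$ met $B(x_0,r/2)$. Then $U$ would contribute a positive amount of reduced boundary $\partial^* U$ inside $B(x_0,r/2)$, which, being contained in $K$, has positive $\HH^{N-1}$-measure; combined with the Ahlfors-regular lower bound applied to a small ball centred on a point of $\partial^* U \cap B(x_0,r/2)$, one gets $\HH^{N-1}(K \cap B(x_0,r/4)) \geq \HH^{N-1}(\partial^* V \cap B) + \HH^{N-1}(\partial^* W \cap B) + \HH^{N-1}(\partial^* U \cap B)$. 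The first two terms are each essentially the full plane-density contribution ($\sim \omega_{N-1}(r/4)^{N-1}$, from the near-flatness), so the sum strictly exceeds the density-$1$ bound $\HH^{N-1}(K \cap B(x_0,r/4)) \leq (1+o(1))\omega_{N-1}(r/4)^{N-1}$ guaranteed at $x_0 \in G$. This contradiction shows that for $r$ small, $B(x_0,r) \setminus K$ meets exactly two components, i.e.\ $x_0 \in K^*$ and $x_0 \in \partial^* V \cap \partial^* W$ for exactly these two.

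There is a subtlety in the density-counting step: near-flatness of $K$ in $B(x_0,r)$ means $K$ is trapped in a thin slab, but one must know that on \emph{both} sides of $P$ the set $K$ genuinely carries $\HH^{N-1}$-mass of the order of a full $(N-1)$-plane, not merely that $K$ is contained in the slab. This is where I would use that $x_0 \in \partial^* V \cap \partial^* W$ from Corollary~\ref{lem_plane_separation} together with the relative isoperimetric inequality in the form of Remark~\ref{rmk_isoperimetry}: since both $V$ and $W$ occupy a definite fraction of $B(x_0,r)$ (each contains a slab of definite volume), $\HH^{N-1}(\partial^* V \cap B(x_0,r))$ and $\HH^{N-1}(\partial^* W \cap B(x_0,r))$ are each $\geq C^{-1} r^{N-1}$, and more precisely, as $r \to 0$ and $\beta \to 0$, each of $\liminf_{r\to0} r^{1-N}\HH^{N-1}(\partial^* V \cap B(x_0,r))$ is at least $\omega_{N-1}(1 - o(1))$ by comparing with the plane $P$ and using lower semicontinuity of perimeter under blow-up. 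Summing the two forces the density of $K$ at $x_0$ to be $\geq 2\omega_{N-1}(1-o(1))$ unless $V$ and $W$ are the only components locally; reconciling this with density exactly $\omega_{N-1}$ (from $x_0\in G$) is precisely the exclusion of a third component and, simultaneously, pins down that the two reduced boundaries together recover all of $K$ near $x_0$. I expect this perimeter/blow-up bookkeeping — making rigorous that two one-sided reduced boundaries through a flat point each have lower density $\omega_{N-1}$ — to be the main obstacle; everything else is assembling Corollary~\ref{lem_plane_separation}, Proposition~\ref{prop_af}, \eqref{eq_ViKn}, and standard properties of reduced boundaries.
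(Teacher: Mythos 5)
There is a genuine gap at the heart of your argument: the step that rules out a third component near a generic point. First, the displayed inequality $\HH^{N-1}(K \cap B(x_0,r/4)) \geq \HH^{N-1}(\partial^* V \cap B) + \HH^{N-1}(\partial^* W \cap B) + \HH^{N-1}(\partial^* U \cap B)$ is false as stated: by the structure of Caccioppoli partitions, $\HH^{N-1}$-a.e. interface point lies on exactly \emph{two} reduced boundaries, so the correct comparison is \eqref{eq_ViKn}, with a factor $2$ on the right. Second, and more seriously, even with the correct factor the bookkeeping at a point where $K$ has density $1$ and $\partial^* V$, $\partial^* W$ each have density $\omega_{N-1}$ only yields $\HH^{N-1}(\partial^* U \cap B(x_0,r)) = o(r^{N-1})$ (and hence, via isoperimetry, $\abs{U \cap B(x_0,r)} = o(r^N)$) for the remaining components. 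Smallness is not absence: to conclude $x_0 \in K^*$ you need \emph{no} other component to meet some ball $B(x_0,r)$, and a thin ``tentacle'' of a third component reaching into $B(x_0,r/2)$ with relative measure negligible at every scale is perfectly consistent with density $1$, flatness $\to 0$, and all the a.e./generic information you invoke. (Proposition \ref{prop_isolated} only forbids components compactly contained in a ball, not infiltrations coming from outside; and the quantitative lower bounds on the volume/perimeter of a component meeting $B(x_0,r/2)$, Lemma \ref{lemV_volume} and Proposition \ref{propV_AF}, are proved \emph{after} this proposition and depend on it.)

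This is exactly the difficulty the paper addresses with Lemma \ref{lem_infiltration}: if $V \cup W$ fills most of $B(x_0,r)$, then no other component meets $B(x_0,r/2)$. Its proof is not measure-theoretic counting but a quasiminimality argument: one builds a topological competitor by merging the infiltration into $V$ or $W$ (a Vitali covering construction is needed because competitors must be closed, to handle a possibly non-negligible set of triple points), applies \eqref{eq_quasi}, and then iterates a decay estimate down to a reduced-boundary density contradiction. Your proposal uses quasiminimality only through Ahlfors-regularity, rectifiability and Corollary \ref{lem_plane_separation}, and that input is insufficient to exclude infiltrations; some competitor-based lemma of this type is needed before the ``exactly two components'' conclusion, and hence before both assertions of the proposition, can be drawn. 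The remaining parts of your plan (slab separation from Corollary \ref{lem_plane_separation}, the projection bound giving each of $\partial^* V$, $\partial^* W$ nearly full density, and deducing $\HH^{N-1}(\Omega \cap \partial V \setminus \partial^* V) = 0$ from the volume density $1/2$ of $V$ at a.e. point of $\partial V$) are consistent with the paper's proof, but they all hinge on the missing infiltration step.
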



In order to prove Proposition \ref{prop_generic_points}, we start with a Lemma in the same spirit as Lemma \ref{lem_af_volume}. It says that if two components $V$, $W$ of $\Omega \setminus K$ fill most of the volume of a ball $B(x_0,r) \setminus K$, there cannot be an inflitration of another component of $\Omega \setminus K$ which meets $B(x_0,r/2)$. Results of this type are standard in the literature \cite{Tamanini91, Tamanini96, White, Leonardi, Maggi} and the proof consists in building a competitor by merging the infiltration in $V$ or in $W$. Our setting differs though because our objects are not Caccioppoli partitions but closed sets; we should build the competitor through set operations on the topological boundaries of $V$, $W$ and $(V \cup W)^c$ instead of their reduced boundaries. This poses a difficulty in the case where they share a common and non-negligible piece of boundary, similarly to the lakes of Wada.

\begin{lem}\label{lem_infiltration}
    There exists a constant $\varepsilon_0 \in (0,1)$ which depends on $N$, $M$ such that the following property holds true.
    Let $K$ be a $M$-quasiminimal set in $\Omega$. Then for all $x_0 \in K$ and for all $r > 0$ such that $B(x_0,r) \subset \Omega$ and for all distinct connected components $V$, $W$ of $\Omega \setminus K$, if
    \begin{equation*}
        \abs{B(x_0,r) \setminus (V \cup W)} \leq \varepsilon_0 r^N, 
    \end{equation*}
    then $B(x_0,r/2) \setminus K$ only meets $V$ and $W$.
\end{lem}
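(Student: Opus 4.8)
The plan is to argue by contradiction and iteration, mirroring the scheme of Lemma \ref{lem_af_volume}. Suppose $\abs{B(x_0,r) \setminus (V \cup W)} \leq \varepsilon_0 r^N$ but some third component $U$ of $\Omega \setminus K$ meets $B(x_0,r/2)$. The idea is to build a topological competitor in a slightly smaller ball $\overline{B}(x_0,t)$, with $t \in (r/2,r)$ chosen by the coarea formula so that $\HH^{N-1}(\partial B(x_0,t) \setminus (V\cup W)) \leq C r^{-1}\abs{B(x_0,r)\setminus(V\cup W)} \leq C\varepsilon_0 r^{N-1}$. The competitor should "fill in" everything except $V$ and $W$: intuitively one wants $F := (K \setminus B(x_0,t)) \cup \big(\overline{B}(x_0,t) \cap \partial V \cap \partial W\big) \cup \big(\partial B(x_0,t) \setminus (V \cup W)\big)$, i.e. keep only the portion of $K$ inside that genuinely separates $V$ from $W$, plus the small spherical piece needed so that $F$ stays relatively closed and agrees with $K$ outside. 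The point of keeping $\partial V \cap \partial W$ is precisely the "lakes of Wada" issue flagged in the text: if $V$ and $W$ share a non-negligible common boundary, we must retain it, otherwise $F$ would no longer separate $V$ from $W$ and would fail to be a topological competitor.

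The verification that $F$ is a topological competitor of $K$ in $B(x_0,r)$ proceeds as in the earlier lemmas: take $x,y \in \Omega \setminus (\overline{B}(x_0,t) \cup K)$ separated by $K$, assume a path $\gamma$ in $\Omega \setminus F$ joins them, and derive a contradiction. If $\gamma$ avoids $\partial B(x_0,t)$ it lives where $F = K$, so $x,y$ stay separated. If $\gamma$ crosses $\partial B(x_0,t)$, it does so only through $\partial B(x_0,t) \cap (V \cup W)$ (since the complementary spherical part is in $F$); moreover $\gamma$ inside $\overline{B}(x_0,t)$ avoids $\partial V \cap \partial W$, so each connected sub-arc of $\gamma$ inside the ball lies in a single component of $\Omega \setminus K$, which by the crossing analysis must be $V$ or $W$. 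Chaining these arcs shows $x$ and $y$ are both connected to $V \cup W$ through $\Omega \setminus K$ — but one needs that $V$ and $W$ cannot be "linked" by such a path inside the ball without crossing $\partial V \cap \partial W$, which is where the topology of $F$ does its work; the upshot is that $x,y$ end up in the same component of $\Omega \setminus K$, a contradiction. Hence $F$ is a valid competitor.

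Applying quasiminimality (\ref{eq_quasi}) in $B(x_0,r)$ gives $\HH^{N-1}(K \setminus F) \leq M \HH^{N-1}(F \setminus K)$; since $F \setminus K$ is contained in the thin spherical set $\partial B(x_0,t) \setminus (V \cup W)$ of measure $\leq C \varepsilon_0 r^{N-1}$, and $K \setminus F$ contains $K \cap B(x_0,t)$ minus the part of $\partial V \cap \partial W$ we kept, we obtain roughly
\begin{equation*}
    \HH^{N-1}\big(K \cap B(x_0,t) \setminus (\partial V \cap \partial W)\big) \leq C \varepsilon_0 r^{N-1}.
\end{equation*}
Now $K \cap B(x_0,r/2) \setminus (\partial V \cap \partial W)$ must have positive measure near any point of $\partial U$, because $U$ is a third component: by Proposition \ref{prop_generic_points}-type reasoning (or directly from Ahlfors-regularity and the fact that $\partial^* U \subset K$ is disjoint in density from $\partial V \cap \partial W$), the reduced boundary $\partial^* U \cap B(x_0,r/2)$ contributes a definite amount $\geq C^{-1}(\mathrm{dist}\text{-scale})^{N-1}$ to $K$ that is not counted in $\partial V \cap \partial W$. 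Combined with a relative-isoperimetry/iteration argument on $\abs{B(x_0,\rho)\setminus(V\cup W)}$ exactly as in Lemma \ref{lem_af_volume} to propagate the smallness down to all scales and to shifted centers, this contradicts Ahlfors-regularity once $\varepsilon_0$ is small enough (depending on $N,M$).

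The main obstacle is the topological-competitor verification in the presence of a shared boundary $\partial V \cap \partial W$: one has to be careful that $F$ is relatively closed, that $F \setminus B(x_0,t) = K \setminus B(x_0,t)$, and above all that a path in $\Omega \setminus F$ threading the ball really cannot re-connect $V$ to $W$ or to $U$ — this requires a clean argument that $\partial B(x_0,t) \cap V$ and $\partial B(x_0,t) \cap W$ are "shielded" from each other inside $\overline{B}(x_0,t)$ by the retained set $\partial V \cap \partial W$ together with $K \setminus B(x_0,t)$. Quantifying that $\partial^*U$ genuinely adds measure disjoint from $\partial V \cap \partial W$ (so that the final contradiction with Ahlfors-regularity bites) is the second delicate point, and is the analogue of the density dichotomy used at the end of Lemma \ref{lem_af_volume}.
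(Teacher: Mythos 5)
There is a genuine gap at the heart of your argument: the set $F := (K \setminus B(x_0,t)) \cup \bigl(\overline{B}(x_0,t) \cap \partial V \cap \partial W\bigr) \cup \bigl(\partial B(x_0,t) \setminus (V \cup W)\bigr)$ is in general \emph{not} a topological competitor, and it fails precisely in the configuration the lemma is meant to exclude. Suppose a third component $U$ infiltrates the ball and has interfaces against both $V$ and $W$ inside $B(x_0,t)$ (think of a thin slab $U$ wedged between $V$ and $W$, so that locally $\partial V \cap \partial W = \emptyset$ while $\partial V \cap \partial U$ and $\partial U \cap \partial W$ are large). Take $x \in V$ and $y \in W$ outside $\overline{B}(x_0,t)$: they are separated by $K$, but a path can enter the ball through $\partial B(x_0,t) \cap V$, cross $\partial V \cap \partial U$ (which you removed from $F$), travel through $U$, cross $\partial U \cap \partial W$ (also removed), and exit through $\partial B(x_0,t) \cap W$, all without meeting $F$. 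Your verification asserts that each sub-arc of the path inside the ball stays in a single component of $\Omega \setminus K$, but that is exactly what fails once the interfaces $\partial V \cap \partial U$ and $\partial U \cap \partial W$ are no longer in $F$; retaining only $\partial V \cap \partial W$ does not shield $V$ from $W$ when they can be bridged through a third component. Since quasiminimality can only be applied to genuine topological competitors, the key estimate $\HH^{N-1}(K \cap B(x_0,t) \setminus (\partial V \cap \partial W)) \leq C\varepsilon_0 r^{N-1}$ is not obtained, and the rest of the proof collapses. (A secondary point: invoking Proposition \ref{prop_generic_points} here would be circular, since that proposition is proved using this lemma.)

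The paper circumvents this by never trying to delete all of $K \cap B(x_0,t)$ away from $\partial V \cap \partial W$ in one stroke. Instead it aims only at the weaker bound $\HH^{N-1}(B(x_0,t) \cap \partial^*(V \cup W)) \leq C\,\HH^{N-1}(\partial B(x_0,t) \setminus (V \cup W))$, and obtains it via a Vitali covering of $\partial^*(V\cup W)$ by disjoint small balls $\overline{B}(y_k,t_k)$ in which $K$ is $\varepsilon$-flat and the volume of $V \cup W$ is about one half, so that each such ball exhibits a clean two-sided interface between $V$ (or $W$) and a single third component $U_k$. It then removes $K$ only inside the balls of one type (say those where the side is $V$), and adds the thin spherical bands $Z_k = \set{x \in \partial B(y_k,t_k) \mid \mathrm{dist}(x,P_k) \leq \varepsilon t_k}$ around each deleted ball together with $\partial B(x_0,t)\setminus(V\cup W)$; these bands restore the separation that your construction loses, and the added area they contribute is of order $\varepsilon \sum_k t_k^{N-1}$, hence negligible as $\varepsilon \to 0$. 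The resulting perimeter bound for $\partial^*(V\cup W)$ then feeds into the relative isoperimetric inequality and the same iteration/density argument you sketch at the end (carried out for $V \cup W$ and its reduced boundary, with no appeal to later results). If you want to salvage your plan, you must replace your single global competitor by some such localized construction that keeps $V$ and $W$ separated from each other and from the infiltrating components after the deletion.
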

\begin{proof}
    We let $x_0 \in K$ and $r > 0$ be such that $B(x_0,r) \subset \Omega$ and $V$, $W$ be distinct connected components of $\Omega \setminus K$ such that
    \begin{equation}\label{eq_initVW}
        \abs{B(x_0,r) \setminus (V \cup W)} \leq \varepsilon_0 r^N,
    \end{equation}
    for some small $\varepsilon_0 > 0$ which will be fixed later (depending on $N$ and $M$).
    According to the co-area formula,
    \begin{equation*}
        \abs{B(x_0,r) \setminus (V \cup W)} = \int_0^r \HH^{N-1}(\partial B(x_0,t) \setminus (V \cup W)) \dd{t}
    \end{equation*}
    so we can find a radius $t \in (r/2,r)$ such that
    \begin{equation}\label{eq_tVW}
        \HH^{N-1}(\partial B(x_0,t) \setminus (V \cup W)) \leq C r^{-1} \abs{B(x_0,r) \setminus (V \cup W)} \leq C \varepsilon_0 r^{N-1}.
    \end{equation}
    The next step of the proof consists in showing that
    \begin{equation}\label{eq_goal_VW}
        \HH^{N-1}(B(x_0,t) \cap \partial (V \cup W)) \leq C \HH^{N-1}(\partial B(x_0,t) \setminus (V \cup W)).
    \end{equation}
    The principle is to build a competitor by removing the piece $B(x_0,t) \cap \partial V \cap \partial (V \cup W)$ or $B(x_0,t) \cap \partial W \cap \partial (V \cup W)$ (depending on which choice removes the most area) but adding $\partial B(x_0,t) \setminus (V \cup W)$ (for the topological condition (\ref{eq_topological}) to be satisfied). Then (\ref{eq_goal_VW}) follows by quasiminimality and the fact that
    \begin{multline*}
    \HH^{N-1}(B(x_0,t) \cap \partial (V \cup W)) \leq \HH^{N-1}(B(x_0,t) \cap \partial V \cap \partial (V \cup W)) \\+ \HH^{N-1}(B(x_0,t) \cap \partial W \cap \partial (V \cup W)).
    \end{multline*}
    The issue here is that admissible competitors must be closed sets so we can only remove a relative open subset of $K$. A first try is to take $B(x_0,t) \cap \partial V \setminus \partial W$ as an open replacement of $B(x_0,t) \cap \partial V \cap \partial (V \cup W)$. However, the former might have a strictly smaller area in case the set of ``triple points'' $\partial V \cap \partial W \cap \partial (V \cup W)$ is not $\HH^{N-1}$-neglible. We will instead build the competitor via a more technical covering argument.

    We consider some $\varepsilon > 0$ and a family of balls $(B(y_k,t_k))_{k}$, where $y_k \in B(x_0,t) \cap \partial^* (V \cup W)$ and $t_k > 0$, such that the closed balls $(\overline{B}(y_k,t_k))_k$ are disjoint,
    \begin{equation}\label{eq_partialVW}
        \HH^{N-1}\Bigl(B(x_0,t) \cap \partial^* (V \cup W) \setminus \bigcup_i \overline{B}(y_k,t_k)\Bigr) = 0
    \end{equation}
    and for all $k$,
    \begin{enumerate}[label = (\roman*)]
    \item $\overline{B}(y_k,t_k) \subset B(x_0,t)$;
    \item there exists an hyperplane $P_k$ passing through $y_k$ such that
    \begin{equation*}
        K \cap \overline{B}(y_k,t_k) \subset \set{\mathrm{dist}(\cdot,P_k) \leq \varepsilon t_k};
    \end{equation*}
    \item
    \begin{equation}\label{eq_Hvolume}
        \left(\frac{1}{2} - \frac{1}{100}\right) \abs{B(y_k,t_k)} \leq \abs{B(y_k,t_k) \cap (V \cup W)} \leq \left(\frac{1}{2} + \frac{1}{100}\right) \abs{B(y_k,t_k)}.
    \end{equation}
\end{enumerate}
    Here, note that $\Omega \cap \partial^* (V \cup W) \subset K$ so the balls are centered in $K$.
    The existence of such a family of balls can be justified using the Vitali covering Theorem \cite[Theorem 2.8]{mattila} with respect to the Radon measure $\mu = \HH^{N-1} \mres K$ and using also the standard properties of reduced boundaries \cite[Theorem 3.61]{AFP}.
    As $\HH^{N-1}\left(B(x_0,t) \cap \partial^* (V \cup W)\right) < +\infty$, we can also assume that the family of balls is finite, provided that we replace (\ref{eq_partialVW}) by
        \begin{equation}\label{eq_partialVW2}
        \HH^{N-1}\Bigl(B(x_0,t) \cap \partial^* (V \cup W) \setminus \bigcup_k B(y_k,t_k)\Bigl) \leq \varepsilon.
    \end{equation}
    If $\varepsilon$ is small enough as in Lemma \ref{lem_plane_separation}, the connected components of 
    \begin{equation}\label{eq_VWP}
        \set{x \in \overline{B}(y_k,t_k) | \mathrm{dist}(x,P_k) > \varepsilon t_k}
    \end{equation}
    lie in distinct connected components of $\Omega \setminus K$.
    In view of (\ref{eq_Hvolume}), it is not possible for $B(y_k,t_k) \cap (V \cup W)$ neither $B(y_k,t_k) \setminus (V \cup W)$ to be contained in $\set{\mathrm{dist}(\cdot,P_k) \leq \varepsilon t_k}$ if $\varepsilon$ is small enough because otherwise $B(y_k,t_k) \cap (V \cup W)$ would have a too small or a too big measure. Therefore, one of the components of (\ref{eq_VWP}) must be contained in $V$ or $W$, and the other component of (\ref{eq_VWP}) must be contained in a component of $\Omega \setminus K$ which is neither $V$, neither $W$ and which we denote by $U_k$. We let $S(V)$ denote the set of indices $k$ such that one of the component of (\ref{eq_VWP}) is contained in $V$ (resp. $S(W)$ for $W$).
    By (\ref{eq_partialVW2}) and Ahlfors-regularity of $K$, we have
    \begin{align}
        \HH^{N-1}(B(x_0,t) \cap \partial^* (V \cup W)) &\leq \HH^{N-1}(K \cap \bigcup_k \overline{B}(y_k,t_k)) + \varepsilon\nonumber\\
                                                       &\leq C \sum_{k \in S(V)} t_k^{N-1} + \sum_{k \in S(W)} t_k^{N-1} + \varepsilon.\label{eq_VW_select}
    \end{align}
    Without loss of generality, we assume that
    \begin{equation}\label{eq_VW_select2}
        \HH^{N-1}(B(x_0,t) \cap \partial^* (V \cup W)) \leq C \sum_{k \in S(V)} t_k^{N-1} + \varepsilon,
    \end{equation}
    where the constant $C$ has only be multiplied by $2$ compared to (\ref{eq_VW_select}). In the construction below, we build a competitor by making holes in the balls indexed by $k \in S(V)$ and this this meant to remove most of $B(x_0,t) \cap \partial V \cap \partial^* (V \cup W)$.
    For all $k \in S(V)$, we let
    \begin{equation*}
        Z_k := \set{x \in \partial B(y_k,t_k) | \mathrm{dist}(x,P_k) \leq \varepsilon t_k}
    \end{equation*}
    and we define
    \begin{equation*}
        F := \Bigl(K \setminus \bigcup_{l \in S(V)} B(y_k,t_k)\Bigr) \cup \bigcup_{k \in S(V)} Z_k \cup \Bigl(\partial B(x_0,t) \setminus (V \cup W)\Bigr).
    \end{equation*}
    We then justify that $F$ is a topological competitor of $K$ in $\overline{B}(x_0,t)$.
    It is clear that $F$ is relatively closed in $\Omega$ because it is a finite union of relatively closet set.
    Let $x$ and $y$ be two points in $\Omega \setminus (\overline{B}(x_0,t) \cup K)$ which are connected by a path in $\Omega \setminus F$. We want to show that $x$ and $y$ lie in the same component of $\Omega \setminus K$.
    If the path does not intersect $\partial B(x_0,t)$, then it stays in the complement of $\overline{B}(x_0,t)$, where $F$ coincide with $K$. In this case, the path does not meet $K$ and the points $x$ and $y$ are also connected in $\Omega \setminus K$. If the path meets $\partial B(x_0,t)$, it can only be at a point of $V \cup W$ because $F$ contains $\partial B(x_0,t) \setminus (V \cup W)$. Considering the portion of the path leaving $x$ until the first time it meets $\partial B(x_0,t)$, we see that $x$ is connected to $V$ or $W$ in the complement of $K$, and similarly for $y$.
    Assuming by contradiction that the two points $x$, $y$ are not in the same connected components of $\Omega \setminus K$, then at least one of them is in $W$, let's say $x$. We consider the portion of the path which starts for $x$ and meets $K$ for the first time. As the path is disjoint from $K \setminus \bigcup \set{B(y_k,t_k) | k \in S(V)} \subset F$, the meeting point with $K$ can only be at a point of $B(y_k,t_k) \cap K$. In particular, the path must cross $\partial B(y_k,t_k)$ before meeting $K$ and since it is disjoint from $Z_k \subset F$, it can only be at a point of $V$ or $U_k$.
    This contradicts the fact that before meeting $K$ for the first time, the path must be contained in $W$.
    We conclude that $F$ is a topological competitor in $\overline{B}(x_0,t)$.

    We finally apply the quasiminimality property (\ref{eq_quasi}) in $\overline{B}(x_0,t)$,
    \begin{equation*}
        \HH^{N-1}(K \setminus F) \leq M \HH^{N-1}(F \setminus K),
    \end{equation*}
    whence
    \begin{align*}
        \HH^{N-1}(K \cap \bigcup_{k \in S(V)} B(y_k,t_k)) &\leq M \HH^{N-1}(\partial B(x_0,t) \setminus (V \cup W)) + M \sum_{k \in S(V)} \HH^{N-1}(Z_k)\\
                                                          &\leq M \HH^{N-1}(\partial B(x_0,t) \setminus (V \cup W)) + C \varepsilon \sum_{k \in S(V)} t_k^{N-1}.
    \end{align*}
    By (\ref{eq_VW_select2}) and since the balls $(B(y_k,t_k))_k$ are disjoint, centered on $K$ and $K$ is Ahlfors-regular, we have
    \begin{align*}
        \HH^{N-1}(B(x_0,t) \cap \partial^* (V \cup W)) &\leq C \sum_{k \in S(V)} t_k^{N-1} + \varepsilon\\
                                                       &\leq \HH^{N-1}(K \cap \bigcup_{k \in S(V)} B(y_k,t_k)) + \varepsilon.
    \end{align*}
    We can bound from above similarly
    \begin{equation*}
        C \varepsilon \sum_{k \in S(V)} t_k^{N-1} \leq C \varepsilon \HH^{N-1}(B(x_0,t) \cap K).
    \end{equation*}
    but the important point is that this term goes to $0$ when $\varepsilon \to 0$. We conclude that 
    \begin{equation}\label{eq_step1_VW}
        \HH^{N-1}(B(x_0,t) \cap \partial^* (V \cup W)) \leq C \HH^{N-1}(\partial B(x_0,t) \setminus (V \cup W)),
    \end{equation}
    where $C \geq 1$ depends on $N$ and $M$.

    Next, by (\ref{eq_step1_VW}) and (\ref{eq_tVW}), we find
    \begin{equation*}
        \HH^{N-1}(B(x_0,t) \cap \partial^* (V \cup W)) \leq C \varepsilon_0 r^{N-1}.
    \end{equation*}
    Using (\ref{eq_initVW}), we can choose $\varepsilon_0$ small enough (depending only on $N$) so that the small volume of $B(x_0,r) \setminus (V \cup W)$ implies
    \begin{equation*}
        \abs{B(x_0,r/2) \setminus (V \cup W)} \leq \tfrac{1}{2} \abs{B(x_0,r/2)},
    \end{equation*}
    and thus we can apply the relative isoperimetric inequality to $B(x_0,r/2) \setminus (V \cup W)$ in the ball $B(x_0,r/2)$. This yields
    \begin{align*}
        \abs{B(x_0,r/2) \setminus (V \cup W)} &\leq C \HH^{N-1}(B(x_0,r/2) \cap \partial^* (V \cup W))^{N/(N-1)}\\
                                              &\leq C \varepsilon_0^{N/(N-1)} r^N.
    \end{align*}
    We can thus choose $\varepsilon_0$ small enough again (depending only on $N$ and $M$) so that
    \begin{equation*}
        \abs{B(x_0,r) \setminus (V \cup W)} \leq \varepsilon_0 r^N \implies \abs{B(x_0,r/2) \setminus (V \cup W)} \leq \varepsilon_0 (r/2)^N.
    \end{equation*}
    Iterating this estimate, we deduce
    \begin{equation}\label{eq_conclusion_VW}
        \abs{B(x_0,r) \setminus (V \cup W)} \leq \varepsilon_0 r^N \implies \lim_{t \to 0} t^{-N} \abs{B(x_0,t) \setminus (V \cup W)} \leq 2^{N} \varepsilon_0.
    \end{equation}
    Let us assume now that we have $\abs{B(x_0,r) \setminus (V \cup W)} \leq \varepsilon_0 (r/2)^N$, where $\varepsilon_0$ is small enough for (\ref{eq_conclusion_VW}) to hold.
    We see that for all $x \in K \cap B(x_0,r/2)$, we have
    \begin{equation*}
        \abs{B(x,r/2) \setminus (V \cup W)} \leq \varepsilon_0 (r/2)^N,
    \end{equation*}
    and then by (\ref{eq_conclusion_VW}) that for all $x \in K \cap B(x_0,r/2)$,
    \begin{equation*}
        \lim_{t \to 0} t^{-N} \abs{B(x,t) \setminus (V \cup W)} \leq 2^N \varepsilon_0.
    \end{equation*}
    This holds true in particular for all points $x \in B(x_0,r/2) \cap \partial^* (V \cup W)$ but, if $\varepsilon_0$ is chosen small enough (depending only on $N$), this contredicts the properties of the reduced boundary (\cite[Theorem 3.61]{AFP}) so $\HH^{N-1}(B(x_0,r/2) \cap \partial^* (V \cup W)) = 0$. Then the relative isoperimetric inequality in $B(x_0,r/2)$ shows that we must have $\abs{B(x_0,r/2) \cap \partial^* (V \cup W)} = 0$ or $\abs{B(x_0,r/2) \setminus (V \cup W)} = 0$ but the former is impossible due to our initial assumption (\ref{eq_initVW}) with a small enough $\varepsilon_0$. We conclude that $B(x_0,r/2)$ cannot meet a component of $\Omega \setminus K$ other than $V$ or $W$.
\end{proof}

We deduce a sufficient condition for a point $x_0 \in K$ to be in $K^*$.
\begin{prop}\label{prop_plane_separation}
    There exists a constant $\varepsilon_0 \in (0,1)$ which depends on $N$, $M$ such that the following property holds true.
    Let $K$ be a $M$-quasiminimal set in $\Omega$. For all $x_0 \in K$ and $r > 0$ such that $B(x_0,r) \subset \Omega$ and $\beta(x_0,r) \leq \varepsilon_0$, 
    \begin{equation*}
        \text{$B(x_0,r/2)$ meets exactly two components of $\Omega \setminus K$.}
    \end{equation*}
\end{prop}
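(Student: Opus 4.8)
The plan is to combine two results already in hand: Corollary~\ref{lem_plane_separation} produces \emph{at least} two components, and Lemma~\ref{lem_infiltration} rules out a third one. Fix $\varepsilon_0 \in (0,1/4)$ smaller than the thresholds appearing in Corollary~\ref{lem_plane_separation} and in Lemma~\ref{lem_infiltration} (all depending only on $N$ and $M$); we shrink it once more at the end.

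Suppose $x_0 \in K$, $r > 0$, $B(x_0,r) \subset \Omega$ and $\beta_K(x_0,r) \leq \varepsilon_0$. By definition of the flatness there is a hyperplane $P$ \emph{passing through $x_0$} with
\[
    K \cap B(x_0,r) \subset \set{x \in B(x_0,r) | \mathrm{dist}(x,P) \leq \varepsilon_0 r}.
\]
Let $n$ be a unit normal of $P$ and let $H^+, H^-$ be the two connected components of $\set{x \in B(x_0,r) | \mathrm{dist}(x,P) > \varepsilon_0 r}$; they are convex, hence connected, and nonempty since $\varepsilon_0 < 1$. Corollary~\ref{lem_plane_separation} then applies and gives distinct connected components $V, W$ of $\Omega \setminus K$ with $H^+ \subset V$ and $H^- \subset W$. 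Since $\varepsilon_0 < 1/4$, the point $x_0 + \tfrac{r}{4}\,n$ lies in $H^+ \cap B(x_0,r/2)$ and $x_0 - \tfrac{r}{4}\,n$ lies in $H^- \cap B(x_0,r/2)$, so $B(x_0,r/2)$ meets at least the two components $V$ and $W$.

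For the reverse bound, note that $V \cup W \supset H^+ \cup H^-$, hence
\[
    B(x_0,r) \setminus (V \cup W) \subset B(x_0,r) \cap \set{x | \mathrm{dist}(x,P) \leq \varepsilon_0 r},
\]
which is contained in a slab of width $2\varepsilon_0 r$ intersected with a ball of radius $r$; therefore $\abs{B(x_0,r) \setminus (V \cup W)} \leq c_N \varepsilon_0\, r^N$ for a constant $c_N$ depending only on $N$. Shrinking $\varepsilon_0$ so that $c_N \varepsilon_0$ is below the threshold of Lemma~\ref{lem_infiltration}, that lemma yields that $B(x_0,r/2) \setminus K$ meets only $V$ and $W$. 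Combined with the previous paragraph, $B(x_0,r/2)$ meets exactly the two components $V$ and $W$.

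This argument is essentially bookkeeping of constants and uses no estimate beyond Corollary~\ref{lem_plane_separation} and Lemma~\ref{lem_infiltration}; the only points requiring a little care are that the hyperplane provided by $\beta_K(x_0,r) \leq \varepsilon_0$ passes through $x_0$ (so that Corollary~\ref{lem_plane_separation} is literally applicable) and that one must exhibit \emph{points} of $V$ and $W$ inside the smaller ball $B(x_0,r/2)$, rather than merely invoke the separation statement, which a priori concerns $B(x_0,r)$.
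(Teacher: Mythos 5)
Your proposal is correct and follows essentially the same route as the paper: take the optimal hyperplane through $x_0$, apply Corollary~\ref{lem_plane_separation} to get two distinct components containing the two halves of $B(x_0,r)$ outside the slab, and then use the slab volume bound to invoke Lemma~\ref{lem_infiltration} and exclude any third component in $B(x_0,r/2)$. The extra care you take (exhibiting explicit points of $V$ and $W$ in $B(x_0,r/2)$ and writing out the $c_N\varepsilon_0 r^N$ estimate) is detail the paper leaves implicit, not a different argument.
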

\begin{proof}
    Let $P$ be an hyperplane passing through $x_0$ and which atteins the minimum in the definition of $\beta(x_0,r)$.
    We know by Lemma \ref{lem_plane_separation} that if $\varepsilon_0$ is small enough (depending on $N$ and $M$), then the two components of
    \begin{equation*}
        \set{x \in B(x_0,r) | \mathrm{dist}(x,P) > \varepsilon r}
    \end{equation*}
    are contained in distinct components of $\Omega \setminus K$. If $\varepsilon_0$ is chosen small enough once again, Lemma \ref{lem_infiltration} shows that that there cannot be any other component of $\Omega \setminus K$ which meets $B(x_0,r/2)$, and thus, $x \in K^*$.
\end{proof}

We are now ready to justify Proposition \ref{prop_generic_points}.
\begin{proof}[Proof of Proposition \ref{prop_generic_points}]
    It is standard by Ahlfors-regularity and rectifiability that for $\HH^{N-1}$-a.e. $x \in K$, we have $\lim_{r \to 0} \beta(x,r) = 0$, and according to Proposition \ref{prop_plane_separation}, such a point belongs to $K^*$. If $V$ is a component of $\Omega \setminus K$, then for $\HH^{N-1}$-a.e. $x \in \Omega \cap \partial V$, we have $\lim_{r \to 0} \beta(x,r) = 0$ and, from the proof of Proposition \ref{prop_plane_separation}, one can see that this implies
    \begin{equation*}
        \lim_{r \to 0} \frac{\abs{V \cap B(x,r)}}{\abs{B(x,r)}} = \frac{1}{2}.
    \end{equation*}
    This property characterizes the reduced boundary $\partial^* V$ (up to a $\HH^{N-1}$-negligible set), see \cite[Theorem 3.61]{AFP}.
\end{proof}

It will now be easier to build competitor by merging components into an other; the covering argument of Lemma \ref{lem_infiltration} won't be needed anymore.
\begin{rmk}[Merging components into others]\label{rmk_merging}
    Let $B(x_0,r) \subset \Omega$ be an open ball with center $x_0 \in K$ and radius $r > 0$.
    Let $(V_i)_{i \in I}$ denote the connected components of $\Omega \setminus K$. We select one of them, say $V_k$, and a selection of other components $(V_j)_{j \in J}$ with $J \subset I \setminus \set{k}$.
    The competitor obtained by \emph{merging $(V_j)_{j \in J}$ into $V_k$ within $B$} is defined as
    \begin{equation*}
        F :=  \Bigl(\partial B \cap \bigcup_{j \in J} V_j\Bigr) \cup K \setminus \Bigl(B \cap K^* \cap \bigcup_{i, j \in J \cup \set{k}} (\partial V_i \cap \partial V_j)\Bigr).
    \end{equation*}
    This is a relatively closed subset of $\Omega$ as $K^* \cap \partial V \cap \partial V_i$ and $\Omega \cap \partial (\bigcup_{j \in J} V_j) \subset K$.
    We will check soon $F$ is a topological competitor of $K$ in $\overline{B}$ but let us already state what quasiminimality property says in this case. The inequality
    \begin{equation*}
        \HH^{N-1}(K \setminus F) \leq C \HH^{N-1}(F \setminus K) 
    \end{equation*}
    entails
    \begin{equation}\label{eq_merging}
        \HH^{N-1}\Bigl(B \cap \bigcup_{i,j \in J \cup \set{k}} \partial V_i \cap \partial V_j\Bigr) \leq C \sum_{j \in J} \HH^{N-1}\Bigl(\partial B \cap V_j\Bigr).
    \end{equation}
    Property (\ref{eq_merging}) will be instrumental in the next sections and we shall refer to it as the inequality obtained by \emph{merging $(V_j)_{j \in J}$ into $V_k$ within $B$}.

\begin{figure}[ht]
\begin{center}
\includegraphics[width=0.34\linewidth]{./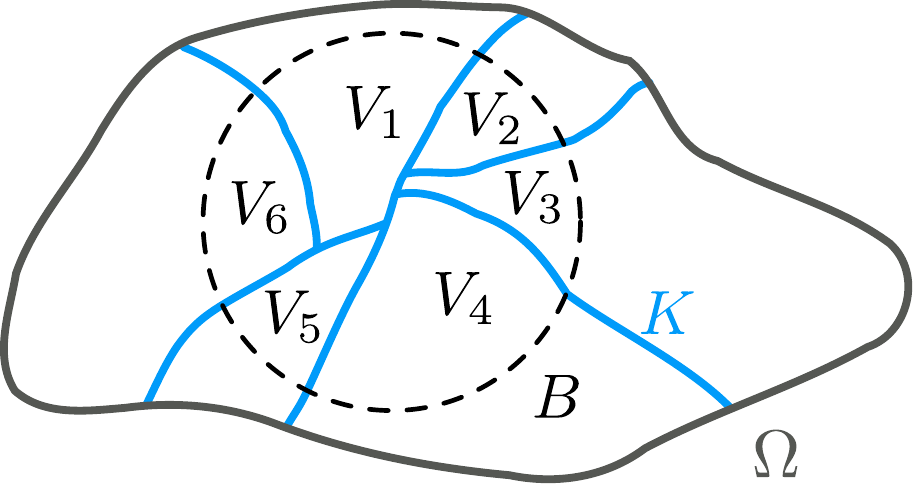}
\quad \quad
\includegraphics[width=0.34\linewidth]{./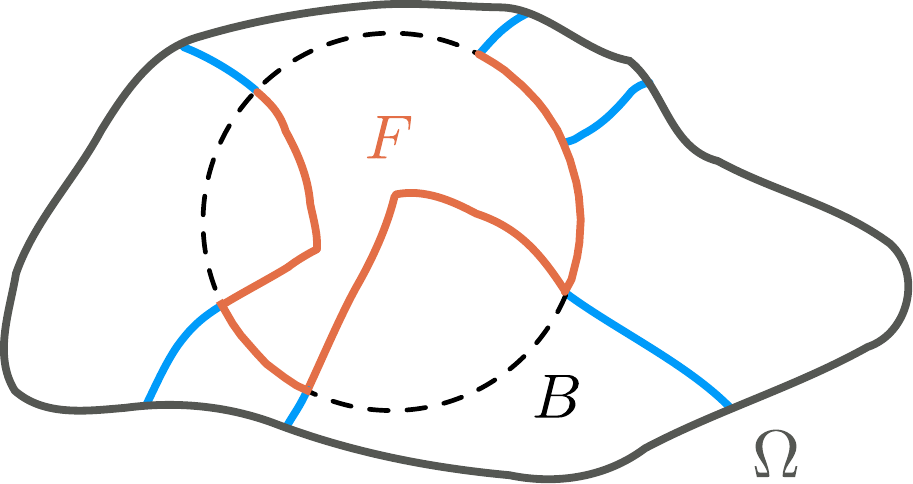}
\end{center}
\caption{Merging $V_2, V_3, V_5$ into $V_1$ within $B$.}
\end{figure}

    We finally justify the validity of (\ref{eq_merging}) by showing that $F$ is a topological competitor of $K$ in $B$.
    We consider two points $x$ and $y$ in $\Omega \setminus (\overline{B} \cup K)$ which are connected in $\Omega \setminus F$ and we want to show that $x$ and $y$ are connected in $\Omega \setminus K$.
    We proceed by contradiction and assume that $x$ and $y$ are not connected in $\Omega \setminus K$.
    If the path connecting $x$ and $y$ does not intersect $\partial B$, then it stays in the complement of $\overline{B}$ and does not meet $K$, which is a contradiction. Otherwise, the path meets $\partial B$ and it can only be at a point of $\partial B \setminus \left(K \cup \bigcup_{j \in J} V_j\right)$ because $F$ contains $\partial B \cap K$ and $\partial B \cap \bigcup_{j \in J} V_j$. Considering the portion of the path leaving $x$ until the first time it meets $\partial B$, we see that $x$ belongs to some connected component $W_1$ of $\Omega \setminus K$ which is distinct from the $V_j$ for al $j \in J$. Similarly, $y$ belongs to a connected component $W_2$ of $\Omega \setminus K$ which is distinct from the $V_j$ for all $j \in J$.
    If $W_1$ differs from $W_2$, then at least one of them, say $W_1$, is distinct from $V_k$. We consider the portion of the path which starts from $x$ and meets $K$ for the first time. 
    As the path is disjoint from
    \begin{equation*}
        K \setminus (B \cap K^* \cap \bigcup_{i, j \in J \cup \set{k}} \partial V_i \cap \partial V_j)
    \end{equation*}
    it can only meets $K$ at a point of $B \cap K^* \cap \bigcup_{i, j \in J \cup \set{k}} \partial V_i \cap \partial V_j$. Therefore, there exists a small $r > 0$ such that $B(x,r) \setminus K$ is covered by exactly two components $V_i$ and $V_j$, where $i$ and $j$ belong to $J \cup \set{k}$. In particular, this point cannot be in $\partial W$ but this contradicts the fact that before meeting $K$, the path was contained in $W$. We conclude that $W_1 = W_2$ so $x$ and $y$ are connected in $\Omega \setminus K$.
    This is again a contradiction to the assumption, and proves that $F$ is a topological competitor of $K$ in $\overline{B}$.
\end{rmk}

\section{Local finiteness}\label{section_finiteness}

\subsection{In the plane}

In the plane, the local finiteness is standard and not difficult to prove. This is based on the observation that the components of $\Omega \setminus K$ have an Ahlfors-regular boundary.
\begin{lem}\label{lem_AF_boundary2D}
    We work in $\R^2$.
    Let $K$ be a $M$-quasiminimal set in $\Omega$, let $x_0 \in K$ and $r > 0$ such that $B(x_0,r) \subset \Omega$. For all connected component $V$ of $\Omega \setminus K$ such that $V \cap B(x_0,r/2) \ne \emptyset$, we have
    \begin{equation*}
        \HH^1(\partial V \cap B(x_0,r)) \geq r/2.
    \end{equation*}
\end{lem}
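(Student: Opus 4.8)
The plan is to slice the ball by the concentric spheres $\partial B(x_0,s)$ and to count their intersections with $\partial V$, then integrate. Fix a point $p \in V \cap B(x_0,r/2)$ and set $\rho := \abs{x_0 - p} < r/2$. The key is the following topological claim: for every $s \in (\rho,r)$, the sphere $\partial B(x_0,s)$ meets $\partial V$.

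To prove the claim, note first that $\overline{B}(x_0,s) \subset B(x_0,r) \subset \Omega$, so $B(x_0,s) \subset\subset \Omega$ for every $s < r$. Since $p \in V \cap B(x_0,s)$, the component $V$ meets $B(x_0,s)$, and by Lemma~\ref{lem_separation} there is another component $W \neq V$ of $\Omega \setminus K$ meeting $B(x_0,s)$. By Proposition~\ref{prop_isolated}, neither $V$ nor $W$ is contained in $B(x_0,s)$; being connected and meeting the open ball $B(x_0,s)$ without being contained in it, each of them must meet $\partial B(x_0,s)$. Hence $C := \partial B(x_0,s) \cap V$ is a non-empty, relatively open, proper subset of the sphere $\partial B(x_0,s)$ (it is disjoint from the non-empty set $\partial B(x_0,s) \cap W$), so, the sphere being connected, $C$ has a non-empty relative boundary. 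Any point $z$ in that relative boundary lies in $\overline{V} \setminus V = \partial V$ and in $\partial B(x_0,s) \subset B(x_0,r)$, which proves the claim.

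It remains to convert this into a measure estimate. By Proposition~\ref{prop_generic_points} we have $\HH^1\big((\partial V \setminus \partial^* V) \cap B(x_0,r)\big) = 0$, so by the coarea inequality for the $1$-Lipschitz map $x \mapsto \abs{x - x_0}$, for a.e.\ $s$ the sphere $\partial B(x_0,s)$ is disjoint from $\partial V \setminus \partial^* V$. Combined with the claim, this yields $\HH^0\big(\partial^* V \cap \partial B(x_0,s)\big) \geq 1$ for a.e.\ $s \in (\rho,r)$. Since $\partial^* V$ is a $1$-rectifiable subset of $\Omega$ with $\HH^1(\partial^* V \cap B(x_0,r)) \leq \HH^1(K \cap B(x_0,r)) < \infty$ (Proposition~\ref{prop_af}), and since the tangential gradient of $x \mapsto \abs{x - x_0}$ along it is bounded by $1$, the coarea formula for rectifiable sets gives
\begin{equation*}
    \HH^1(\partial V \cap B(x_0,r)) = \HH^1(\partial^* V \cap B(x_0,r)) \;\geq\; \int_{\rho}^{r} \HH^0\big(\partial^* V \cap \partial B(x_0,s)\big) \dd{s} \;\geq\; r - \rho \;>\; \tfrac{r}{2},
\end{equation*}
which is even slightly stronger than claimed.

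The delicate point is the topological claim, and within it the appeal to Proposition~\ref{prop_isolated} guaranteeing that $V$ and $W$ genuinely reach the sphere $\partial B(x_0,s)$ rather than being trapped inside the ball — this is precisely where the absence of isolated components enters. The measure-theoretic part is routine once one recalls (Proposition~\ref{prop_generic_points}) that $\partial V$ and $\partial^* V$ coincide up to an $\HH^1$-null set.
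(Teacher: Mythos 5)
Your proof is correct and follows essentially the same route as the paper: both show, via Lemma~\ref{lem_separation} and Proposition~\ref{prop_isolated}, that every circle $\partial B(x_0,s)$ with $s$ in an interval of length $\geq r/2$ meets both $V$ and another component, hence meets $\partial V$, and then integrate over $s$ with the coarea inequality. Your detour through $\partial^* V$, Proposition~\ref{prop_generic_points} and the rectifiable coarea formula is harmless but unnecessary, since the Eilenberg (coarea) inequality for the $1$-Lipschitz map $x \mapsto \abs{x-x_0}$ applies directly to the closed set $\partial V$ without any rectifiability, which is what the paper does.
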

\begin{proof}
    Let $\rho \in (r/2,r)$.
    According to Proposition \ref{prop_isolated}, we cannot have $V \subset B(x_0,\rho)$. Thus there exists a continuous path $\gamma$ contained in $V$ which leaves $B(x_0,r/2)$ and arrives in $\Omega \setminus B(x_0,\rho)$. For all $t \in (r/2,\rho)$, the sphere $\partial B(x_0,t)$ must meet $\gamma$ so it must also meet $V$. Lemma \ref{lem_separation} shows that we can also find in $B(x_0,r/2) \setminus V$ another connected component, say $W$, of $\Omega \setminus K$. Reasoning like before, we see that for all $t \in (r/2,\rho)$, the sphere $\partial B(x_0,t)$ must meet $W$ and thus $\Omega \setminus V$. It follows that for all $t \in (r/2,\rho)$, the sphere $\partial B(x_0,t)$ meets $\partial V$. We then apply the co-area formula
    \begin{equation*}
        \HH^{1}(V \cap B(x_0,r)) \geq \int_{r/2}^\rho \HH^0(V \cap \partial B(0,t)) \dd{t} \geq \rho - r/2.
    \end{equation*}
    and let $\rho \to r$
\end{proof}

\begin{cor}[Local finiteness]\label{cor_localfiniteness}
    Let $K$ be a $M$-quasiminimal set in $\Omega$ and assume that $N = 2$. For all $x_0 \in K$ and $r > 0$ such that $B(x_0,r) \subset \Omega$, 
    \begin{equation*}
        \text{$B(x_0,r/2)$ meets at most $C$ components of $\Omega \setminus K$,}
    \end{equation*}
    where $C \geq 1$ depends only on $N$ and $M$.
\end{cor}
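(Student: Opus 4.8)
The plan is to combine the lower perimeter bound from Lemma \ref{lem_AF_boundary2D} with the global perimeter control coming from Ahlfors-regularity (Proposition \ref{prop_af}) and the fact that the components of $\Omega \setminus K$ form a Caccioppoli partition. First I would fix $x_0 \in K$ and $r>0$ with $B(x_0,r) \subset \Omega$, and let $(V_i)_{i \in I}$ enumerate those connected components of $\Omega \setminus K$ that intersect $B(x_0,r/2)$. For each such $V_i$, Lemma \ref{lem_AF_boundary2D} gives $\HH^1(\partial V_i \cap B(x_0,r)) \geq r/2$. Since $\Omega \cap \partial^* V_i \subset \Omega \cap \partial V_i \subset K$ and, more to the point, since by Proposition \ref{prop_generic_points} we have $\HH^1(\Omega \cap \partial V_i \setminus \partial^* V_i) = 0$, this lower bound transfers to the reduced boundary: $\HH^1(\partial^* V_i \cap B(x_0,r)) \geq r/2$ for every $i \in I$.

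Next I would invoke the Caccioppoli-partition inequality already used in the proof of Proposition \ref{prop_af}, namely \cite[Theorem 4.17]{AFP},
\begin{equation*}
    \sum_{i \in I} \HH^1\big(\partial^* V_i \cap B(x_0,r)\big) \leq 2\, \HH^1\big(K \cap B(x_0,r)\big).
\end{equation*}
Combining this with the per-component lower bound $\HH^1(\partial^* V_i \cap B(x_0,r)) \geq r/2$ and with the Ahlfors upper bound $\HH^1(K \cap B(x_0,r)) \leq C r$ from Proposition \ref{prop_af} yields
\begin{equation*}
    \# I \cdot \frac{r}{2} \leq \sum_{i \in I} \HH^1\big(\partial^* V_i \cap B(x_0,r)\big) \leq 2 \HH^1\big(K \cap B(x_0,r)\big) \leq 2 C r,
\end{equation*}
so $\# I \leq 4 C$, a bound depending only on $M$ (with $N=2$ fixed). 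This is exactly the claim.

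I do not expect a serious obstacle here; the only point requiring a little care is the passage from $\partial V_i$ to $\partial^* V_i$, which is why one needs Proposition \ref{prop_generic_points} rather than just the trivial inclusion $\partial^* V_i \subset \partial V_i$ (the lower bound is on the full topological boundary, but \cite[Theorem 4.17]{AFP} controls only the reduced boundaries). One could alternatively avoid Proposition \ref{prop_generic_points} by noting that Lemma \ref{lem_AF_boundary2D} actually produces, for each $t \in (r/2,\rho)$, a point of $\partial B(x_0,t)$ lying on $\partial V_i$ at which $V_i$ has density bounded away from $0$ and from $1$ — but citing $\HH^1(\partial V_i \setminus \partial^* V_i)=0$ is cleaner. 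A final cosmetic remark: the statement says ``depends only on $N$ and $M$'' but since $N=2$ is fixed the dependence is really only on $M$.
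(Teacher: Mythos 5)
Your proposal is correct and follows essentially the same route as the paper: Lemma \ref{lem_AF_boundary2D} for the per-component lower bound, Proposition \ref{prop_generic_points} to discard a negligible part of the topological boundary, and the Ahlfors upper bound from Proposition \ref{prop_af} to bound the number of components. The only cosmetic difference is bookkeeping: the paper sums over $K^*\cap\partial V_i$ using that each interface point lies on exactly two boundaries, while you sum over the reduced boundaries via the Caccioppoli-partition inequality of \cite[Theorem 4.17]{AFP}, which amounts to the same estimate.
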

\begin{proof}
    Let $x_0 \in K$ and $r > 0$ be such that $B(x_0,r) \subset \Omega$. Let $(V_i)_i$ be the connected components of $B(x_0,r) \setminus \Omega$. For all $i$ such that $V_i$ meets $B(x_0,r/2)$, we have
    \begin{equation*}
        \HH^{1}(\partial V_i \cap B(x_0,r)) \geq C^{-1} r
    \end{equation*}
    and as $\Omega \cap \partial V_i \subset K$ and $\HH^{N-1}(K \setminus K^*) = 0$, we also have
    \begin{equation}\label{eq_bound0}
        \HH^{1}(K^* \cap \partial V_i \cap B(x_0,r)) \geq C^{-1} r.
    \end{equation}
    For all point $x \in K^*$, there exists exactly two indices $i \ne j$ such that $x \in \partial V_i \cap \partial V_j$ and therefore $\sum_i \mathbf{1}_{\partial V_i} = 2$ on $K^*$. It follows that
    \begin{equation}\label{eq_bound1}
        \sum_i \HH^{N-1}(K^* \cap \partial V_i \cap B(x_0,r)) \leq \HH^{N-1}(K \cap B(x_0,r)) \leq C r^{N-1}.
    \end{equation}
    Combining (\ref{eq_bound0}) and (\ref{eq_bound1}), we conclude that the number of indices $i$ such that $V_i$ meets $B(x_0,r/2)$ is bounded depending on $M$.
\end{proof}


The argument of Lemma \ref{lem_AF_boundary2D} is very specific to the dimension 2. In the next section, we present a proof of local finiteness which is suitable in $\R^3$.

\subsection{In the three-dimensional space}

\begin{thm}[Local finiteness]\label{thm_finiteness}
    Let $K$ be a $M$-quasiminimal set in $\Omega$ and assume that $N = 2$ or $3$.
    For all $x_0 \in K$, for all $r > 0$ such that $B(x_0,r) \subset \Omega$, 
    \begin{equation*}
        \text{$B(x_0,r/2)$ meets at most $C$ components of $\Omega \setminus K$,}
    \end{equation*}
    where $C \geq 1$ depends only on $M$.
\end{thm}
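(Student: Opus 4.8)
The plan is to adapt the $2$-dimensional argument of Corollary \ref{cor_localfiniteness}, where the key point was a lower bound on $\HH^{N-1}(\partial V \cap B(x_0,r))$ for every component $V$ meeting $B(x_0,r/2)$. In $\R^3$ this lower bound is no longer automatic: a component $V$ can meet $B(x_0,r/2)$ while having very small boundary area inside $B(x_0,r)$ if it is a thin ``tentacle'' infiltrating from outside. So I would first prove an analogue of Lemma \ref{lem_AF_boundary2D} for ``large'' components — those $V$ with $\abs{V \cap B(x_0,r)} \geq \delta r^N$ for a small $\delta$ — using the relative isoperimetric inequality (Remark \ref{rmk_isoperimetry}): such a $V$ satisfies $\abs{V \cap B(x_0,r)} \leq C \HH^{N-1}(\partial^* V \cap B(x_0,r))^{N/(N-1)}$, hence $\HH^{N-1}(\partial^* V \cap B(x_0,r)) \geq C^{-1}\delta^{(N-1)/N} r^{N-1}$. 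Then, exactly as in Corollary \ref{cor_localfiniteness}, the partition estimate $\sum_i \HH^{N-1}(K^* \cap \partial V_i \cap B) \leq 2\HH^{N-1}(K \cap B) \leq C r^{N-1}$ (using Proposition \ref{prop_generic_points}) bounds the number of large components by $C\delta^{-(N-1)/N}$.

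The remaining difficulty is to control the number of ``small'' components — those $V$ meeting $B(x_0,r/2)$ but with $\abs{V \cap B(x_0,r)} < \delta r^N$. The strategy would be a dyadic/stopping-time argument: cover $B(x_0,r/2)$ by balls $B(x_j, \rho)$ at a scale $\rho$ chosen so that the restriction of the partition to $B(x_j, 2\rho)$ has, after rescaling, a ``large'' component filling most of the volume (using Corollary \ref{cor_fl} to find, in most balls, a subball where $K$ is very flat, and then Corollary \ref{lem_plane_separation} and Lemma \ref{lem_infiltration} to conclude that only two components survive there). In a flat ball only two components of $\Omega\setminus K$ are present, so any small component meeting $B(x_0,r/2)$ must ``die'' before reaching the flat scale — i.e. its trace on the boundary sphere $\partial B(x_0,t)$ must vanish for $t$ below some threshold, which by the co-area formula and Ahlfors-regularity forces it to contribute a definite amount of $K$-area. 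The bookkeeping is to run this over a geometric sequence of radii $2^{-k}r$ and sum up: each small component contributes $\gtrsim (2^{-k}r)^{N-1}$ of boundary area at the scale where it disappears, and these contributions are essentially disjoint after accounting for overlaps, so Ahlfors-regularity $\HH^{N-1}(K\cap B(x_0,r)) \leq C r^{N-1}$ caps their number.

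The main obstacle I expect is precisely this control of small/infiltrating components in $\R^3$: unlike in $\R^2$, a small component need not separate any sphere, so there is no cheap topological lower bound on its boundary, and one must genuinely exploit the merging inequality \eqref{eq_merging} from Remark \ref{rmk_merging} — merging all small components meeting $B(x_0,r/2)$ into one fixed large component $V_0$ within $B(x_0,r)$ — to get
\begin{equation*}
    \HH^{N-1}\Bigl(B(x_0,r) \cap \bigcup_{i,j} \partial V_i \cap \partial V_j\Bigr) \leq C \sum_{j} \HH^{N-1}(\partial B(x_0,t) \cap V_j) \leq C \abs{B(x_0,r) \setminus (V_0 \cup \cdots)}^{(N-1)/N},
\end{equation*}
after choosing a good radius $t$ via co-area. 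The delicate point is to show that the left-hand side is in turn bounded below by a multiple of the number of small components times $(C^{-1}r)^{N-1}$ — this requires knowing that each such $V_j$ shares a non-negligible piece of reduced boundary with some neighbouring component inside $B(x_0,r)$, which should follow from Proposition \ref{prop_isolated} (no component is compactly contained) together with the dimension restriction $N \leq 3$, where the cross-section of a tentacle at the scale it exits the ball is a $1$- or $2$-dimensional object one can still bound from below. I would iterate this over dyadic scales until no small components remain, collecting the bound.
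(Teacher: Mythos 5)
Your treatment of the ``large'' components (volume $\geq \delta r^N$) is fine and matches the easy half of the two-dimensional argument, but the handling of the small components contains the genuine gap, and it is exactly the point the theorem is about. The key quantitative claim -- that each small component meeting $B(x_0,r/2)$ must ``die'' at some dyadic scale $2^{-k}r$ and there contribute boundary area $\gtrsim (2^{-k}r)^{N-1}$, with essentially disjoint contributions, so that Ahlfors-regularity caps their number -- is not justified and is false as stated. A component shaped like a thin tube of radius $\epsilon \ll r$ infiltrating through $B(x_0,r)$ never loses its trace on the spheres $\partial B(x_0,t)$ (so it does not ``die'' in your sense), and its boundary area in any ball of radius $\rho \gg \epsilon$ is only of order $\epsilon\rho$, far below $\rho^{N-1}$ when $N=3$. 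The only scale at which it contributes a full $(\text{scale})^{N-1}$ of area is the scale $\epsilon$ of its own thickness, which is not bounded below, and at that scale Ahlfors-regularity of $K$ only yields a count of order $r^{N-1}/\epsilon^{N-1}$, which blows up as $\epsilon \to 0$; the ``overlap bookkeeping'' cannot repair this. Likewise, the final appeal to Proposition \ref{prop_isolated} and to the dimension restriction (``the cross-section of a tentacle \ldots one can still bound from below'') does not supply the missing lower bound: the cross-section of such a tentacle on the sphere where it exits can have arbitrarily small $\HH^{N-1}$-measure. Nothing in your sketch rules out a huge number of such thin infiltrating components, which is precisely the scenario the theorem must exclude.

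The paper's proof takes a different and genuinely quantitative route that you may want to compare with. Rather than trying to charge each component a fixed amount of area, it orders the components by volume in $B_R$ and proves a self-improving decay estimate: if $\abs{B_R \cap V_k} \leq C_0 R^N k^{-\alpha}$ for all $k$, then after reordering $\abs{B_{R/2} \cap V_k} \leq C R^N k^{-f(\alpha)}$ with $f(t) = \tfrac{N}{N-1}(t + t^{-1} - 1)$. The starting exponent $\alpha = N/(N-1)$ comes from the relative isoperimetric inequality plus Ahlfors-regularity (as in your large-component step), and the improvement is obtained from the merging inequality \eqref{eq_merging} applied on spheres $\partial B_r$, $r \in (R/2,R)$, with a carefully optimized cutoff $N_k \sim k^{(\alpha-1)/\alpha}$ in the double sum over pairs $(i,j)$, followed by integration in $r$ via the co-area formula. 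The restriction $N = 2$ or $3$ enters only through the dynamics of $f$: its iterates starting from $N/(N-1)$ escape to infinity precisely in these dimensions, so after finitely many halvings of the radius the volumes decay faster than $k^{-N}$, at which point Lemma \ref{lem_control_decay} (a differential inequality, again from merging) shows that only a bounded number of components can meet the smaller ball. Your plan would need an input of this strength -- some mechanism converting smallness of volume into an eventual prohibition, uniformly over infinitely many components -- and the dyadic ``death scale'' heuristic does not provide it.
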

\begin{proof}
    For the moment, we let $N$ be any integer greater than or equal $2$. We shall only consider the cases $N=2$ or $3$ at the end of the proof. So let us fix $x_0 \in K$. For $r > 0$, we let $B_r$ denotes the open ball of center $x_0$ and radius $r$. We fix some radius $R > 0$ such that $B_R \subset \Omega$. We denote $(V_i)_{i \geq 1}$ the connected components of $\Omega \setminus K$. If the sequence is finite, we complete it by setting $V_i = \emptyset$ so that it is defined for all indices $i \geq 1$. For all indices $i,j$, we then set
    \begin{equation*}
        V_{ij} =
        \begin{cases}
            \partial V_i \cap \partial V_j &\ \text{if $i \ne j$}\\
            \emptyset                      &\ \text{if $i = j$.}
        \end{cases}
    \end{equation*}
    We assume that the components $(V_i)_{i \geq 1}$ are ordered in such a way that $$\abs{B_R \cap V_i} \geq \abs{B_R \cap V_{i+1}}$$
    and we assume that for some exponent $\alpha > 1$ and some constant $C_0 \geq 1$, we have
    \begin{equation}\label{eq_GH}
        \abs{B_R \cap V_k} \leq \frac{C_0 R^N}{k^{\alpha}} \quad \text{for all $k \geq 1$}.
    \end{equation}
    Let us justify that such an assumption always hold true, at least for $\alpha = N/(N-1)$. We start by recalling the following standard Lemma.
    \begin{lem}\label{lem_serie}
        Let $(c_k)_{k \geq 1}$ be a non-increasing sequence of non-negative real numbers such that $\sum_i c_i < +\infty$. Then for all $k \geq 1$,
        \begin{equation*}
            c_k \leq \frac{C}{k} \sum_{i \geq 1} c_i,
        \end{equation*}
        where $C \geq 1$ is a universal constant.
    \end{lem}
    According to the relative isoperimetric inequality (Remark \ref{rmk_isoperimetry}) and the Ahlfors-regularity of $K$ (Proposition \ref{prop_af}), we have
    \begin{equation*}
        \sum_{k \geq 1} \abs{B_R \cap V_k}^{(N-1)/N} \leq C \sum_{k \geq 1} \HH^{N-1}(B_R \cap \partial V_k) \leq C R^{N-1}.
    \end{equation*}
    Then, we apply Lemma \ref{lem_serie} and conclude that (\ref{eq_GH}) holds true for $\alpha = N/(N-1)$ and a constant $C_0$ which depends on $N$ and $M$.

    We are going to see that an assumption such as (\ref{eq_GH}) implies a better decay at a smaller scale, namely (up to reordering the sequence)
    \begin{equation*}
        \abs{B_{R/2} \cap V_k} \leq \frac{C R^N}{k^{f(\alpha)}} \quad \text{for all $k \geq 1$},
    \end{equation*}
    where $C \geq 1$ is a constant which depends on $C_0$, $N$, $M$, $\alpha$ and $f$ is the function
    \begin{equation*}
        f : t \mapsto \frac{N}{N-1}\left(t + t^{-1} - 1\right).
    \end{equation*}
\begin{figure}[ht]
\begin{center}
\includegraphics[height=140pt]{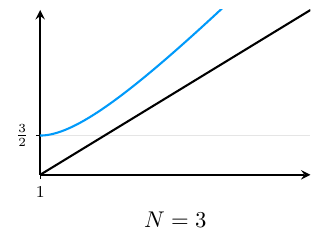}
\includegraphics[height=140pt]{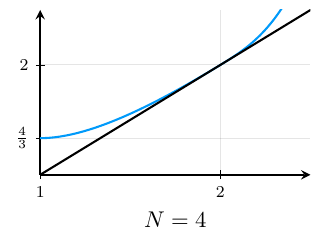}
\end{center}
\begin{center}
\includegraphics[height=140pt]{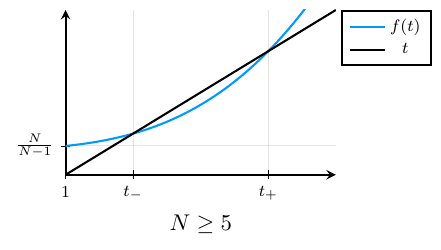}
\end{center}
\caption{The function $f$ and its fixed points in Lemma~\ref{lem_serie}.}
\end{figure}
When $N = 2$ or $3$, we have $f(t) > t$ for all $t > 1$, allowing the procedure to be iterated indefinitely. Besides, the successive iterates $f(\alpha), f^{(2)}(\alpha), ...$ of $\alpha := N/(N-1)$ go to $\infty$ and we will see that if the volumes $(\abs{B_r \cap V_k})_k$ decay too fast in a ball $B_r$, then almost-all components have zero volume in $B_{r/2}$. When $N = 4$, the function $f$ has a fixed point at $t_{-} = t_{+} := 2$ and we have $f(t) > t$ for $t \in (1,2)\cup (2,+\infty)$. If $N \geq 5$ then $f$ has two fixed points $$t_{\pm} := (N \pm \sqrt{N(N-4)})/2$$ and we have $f(t)>t$ for $t\in (1;t_-)\cup (t_+ ;\infty)$ and $f(t)<t$ for $t \in (t_-;t_+)$. In these two last cases, the successive iterates of $\alpha := N/(N-1)$ converge to the fixed point $t_-$ but this does not provide a contradiction. Reasonning as in the $N=2,3$ case, our argument would nevertheless imply that for all $s < t_-$, we have $\abs{V_k \cap B_r} \lesssim k^{-s}$ in a ball $B_r$ of sufficiently small radius (depending on $s$) and that if $\abs{V_k \cap B_R} \lesssim k^{-s}$ for some $s > t_+$, then almost-all components have zero volume in a smaller ball $B_r$ (with $r$ depending on $s$).


    In what follows, the letter $C$ stands for a generic constant $\geq 1$ which depends on $N$, $M$, $C_0$ and $\alpha$.
    Let $r \in (R/2,R)$. By the relative isoperimetric inequality (Remark \ref{rmk_isoperimetry}), and the properties off $(V_i)_i$ (Proposition \ref{prop_generic_points}), we have for all $j \geq 1$,
    \begin{equation*}
        \abs{B_r \cap V_j}^{(N-1)/N} \leq \HH^{N-1}(B_r \cap \partial V_j) \leq \sum_i \HH^{N-1}(B_r \cap V_{ij})
    \end{equation*}
    and then for all $k \geq 1$,
    \begin{equation*}
        \sum_{j \geq k} \abs{B_r \cap V_j}^{(N-1)/N} \leq C \sum_{j \geq k} \sum_{i} \HH^{N-1}(B_r \cap V_{ij}).
    \end{equation*}
    In order to estimate the right-hand side, we are going to use the quasiminimality property by merging components into others (see Remark \ref{rmk_merging} and in particular (\ref{eq_merging})). For $i \ne j$, we merge $V_j$ into $V_i$ to obtain
    \begin{equation}\label{eq_GQ1}
        \HH^{N-1}(B_r \cap V_{ij}) \leq \HH^{N-1}(V_j \cap \partial B_r)
    \end{equation}
    and for all $k \geq 1$, we merge all the components $(V_i)_{i > k}$ into $V_k$ to obtain
    \begin{equation}\label{eq_GQ2}
        \sum_{i,j \geq k} \HH^{N-1}(B_r \cap V_{ij}) \leq C \sum_{j > k} \HH^{N-1}(V_j \cap \partial B_r).
    \end{equation}
    We decompose
    \begin{equation*}
        \sum_{j \geq k} \sum_{i} \HH^{N-1}(B_r \cap V_{ij}) \leq \sum_{j \geq k} \sum_{i=1}^j \HH^{N-1}(B_r \cap V_{ij}) + \sum_{j \geq k} \sum_{i \geq j} \HH^{N-1}(B_r \cap V_{ij})
    \end{equation*}
    and (\ref{eq_GQ2}) already allows to estimate the second term at the right-hand side
    \begin{equation*}
        \sum_{j \geq k} \sum_{i \geq j} \HH^{N-1}(B_r \cap V_{ij}) \leq C \sum_{j > k} \HH^{N-1}(V_j \cap \partial B_r).
    \end{equation*}
    To estimate the first term, we consider some sequence non-decreasing $(N_k)_{k \geq 1}$ of positive integers such that $N_k \leq k$ but which is left unknown for the moment. We use Fubini, (\ref{eq_GQ1}), (\ref{eq_GQ2}) to estimate
    \begin{align*}
        \sum_{j \geq k} \sum_{i=1}^j \HH^{N-1}(B_r \cap V_{ij}) &\leq \sum_{j \geq k} \sum_{i=1}^{N_j} \HH^{N-1}(B_r \cap V_{ij}) + \sum_{j \geq k} \sum_{i=N_j}^j \HH^{N-1}(B_r \cap V_{ij})\\
                                                                &\leq \sum_{j \geq k} \sum_{i=1}^{N_j} \HH^{N-1}(B_r \cap V_{ij}) + \sum_{i \geq N_k} \sum_{j \geq i} \HH^{N-1}(B_r \cap V_{ij})\\
                                                                &\leq C\sum_{j \geq k} N_j \HH^{N-1}(V_j \cap \partial B_r) + C\sum_{i \geq N_k} \HH^{N-1}(V_i \cap \partial B_r).
    \end{align*}
    In conclusion, we obtained
    \begin{multline*}
        \sum_{j \geq k} \abs{B_r \cap V_j}^{(N-1)/N} \leq C\sum_{j \geq k} N_j \HH^{N-1}(V_j \cap \partial B_r) \\+ C\sum_{i \geq N_k} \HH^{N-1}(V_i \cap \partial B_r) + \sum_{j > k} \HH^{N-1}(V_j \cap \partial B_r).
    \end{multline*}
    Integrating over $r \in [R/2,R]$ and in view of the co-area formula
    \begin{equation*}
        \int_{R/2}^R \HH^{N-1}(V_i \cap \partial B_r) \dd{r} \leq C \abs{B_R \cap V_i} \quad \quad \text{for all $i$,}
    \end{equation*}
    we deduce that
    \begin{equation*}
        \sum_{j \geq k} \abs{B_{R/2} \cap V_j}^{(N-1)/N} \leq \frac{C}{R} \sum_{j \geq k} N_j \abs{B_R \cap V_j} + \frac{C}{R} \sum_{i \geq N_k} \abs{B_R \cap V_i} + \frac{C}{R} \sum_{j > k} \abs{B_R \cap V_j}.
    \end{equation*}
    We recall that by assumption (\ref{eq_GH}), we have $\abs{B_R \cap V_k} \leq C_0 R^N/k^{\alpha}$ for all $k \geq 1$, so
    \begin{equation}\label{eq_GCCL0}
        \sum_{j \geq k} \abs{B_{R/2} \cap V_j}^{(N-1)/N} \leq C R^{N-1} \sum_{j \geq k} \frac{N_j}{j^{\alpha}} + C R^{N-1} N_k^{1-\alpha} + \frac{C R^{N-1}}{k^{\alpha-1}}.
    \end{equation}
    The optimal choice for $(N_k)_{k \geq 1}$ is to take $N_k$ as the integer part of $k^{(\alpha-1)/\alpha}$. In particular, notice that we always have
    \begin{equation*}
        C^{-1} k^{(\alpha-1)/\alpha} \leq N_k \leq k^{(\alpha-1)/\alpha}
    \end{equation*}
    so (\ref{eq_GCCL0}) yields
    \begin{equation}\label{eq_GCCL}
        \sum_{j \geq k} \abs{B_{R/2} \cap V_j}^{(N-1)/N} \leq \frac{C R^{N-1}}{k^{(1-\alpha)^2/\alpha}}.
    \end{equation}
    The sequence $(\abs{B_{R/2} \cap V_k})_{k \geq 1}$ may not be non-increasing but if we re-order the sequence in a non-increasing way, the conclusion (\ref{eq_GCCL}) remains true because we have only made the left-hand side of (\ref{eq_GCCL}) smaller. We can then apply Lemma \ref{lem_serie2} below (the proof is the same as in Lemma \ref{lem_serie2} and is omitted) to deduce that for all $k \geq 1$,
    \begin{equation*}
        \abs{B_{R/2} \cap V_k} \leq \frac{C R^N}{k^{f(\alpha)}}, \quad \text{where} \quad f(t) = \frac{N}{N-1}\left(t + t^{-1} - 1\right).
    \end{equation*}
    
    \begin{lem}\label{lem_serie2}
        Let $(c_k)_k$ be a non-increasing sequence such that for all $k$, $c_k \geq 0$ and
        \begin{equation*}
            \sum_{i \geq k} c_i \leq \frac{C_0}{k^\gamma},
        \end{equation*}
        for some constants $C_0 \geq 1$ and $\gamma \geq 0$. Then for all $k \geq 1$,
        \begin{equation*}
            c_k \leq \frac{C}{k^{\gamma + 1}},
        \end{equation*}
        where $C \geq 1$ depends only on $C_0$ and $\gamma$.
    \end{lem}

    In the case $N=2$ or $3$, we can repeat the procedure indefinitely and obtain that the volumes $\abs{B_{2^{-p} R} \cap V_k}$ decay faster than $1/k^5$ after a finite number $p$ of iterations (which depends on $M$). In view of Lemma \ref{lem_control_decay} below, this shows that only a controlled number of $(V_i)_i$ can meet $B_{2^{-p-1} R}$. The statement of Theorem~\ref{thm_finiteness} follows by a covering argument.
\end{proof}


    \begin{lem}\label{lem_control_decay}
        Let $K$ be a $M$-quasiminimal set in $\Omega$. Let $(V_i)_{i \geq 1}$ denote the components of $\Omega \setminus K$, the sequence being finite or infinite. For all $x_0 \in K$ and $r > 0$ such that $B(x_0,r) \subset \Omega$, and for all $k \geq 1$,
        \begin{equation*}
            \sum_{i > k} \abs{B(x_0,r/2) \cap V_i} > 0 \ \implies \ \sum_{i > k} \abs{B(x_0,r) \cap V_i} \geq \frac{C^{-1} r^N}{k^N},
        \end{equation*}
        where $C \geq 1$ depends on $N$ and $M$.
    \end{lem}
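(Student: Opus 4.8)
The plan is to establish the contrapositive. Write $U:=\bigcup_{i>k}V_i$ and $W:=V_1\cup\dots\cup V_k$, so that $\Omega\setminus K=U\sqcup W$, the open set $W$ has at most $k$ connected components, and $\sum_{i>k}\abs{B\cap V_i}=\abs{B\cap U}$ for every ball $B$. We may assume $U\neq\emptyset$ (otherwise the hypothesis fails), so that $V_1,\dots,V_k$ are all non-empty. We shall prove: there is $c=c(N,M)\in(0,1)$ such that if $\abs{B(x_0,r)\cap U}\le c\,k^{-N}r^N$, then $\abs{B(x_0,r/2)\cap U}=0$. This is exactly the statement.

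The core is a one-step decay estimate: if $x\in K$, $B(x,\rho)\subset\Omega$ and $\abs{B(x,\rho)\cap U}=\varepsilon\rho^N$, then $\abs{B(x,\rho/2)\cap U}\le C\,(k\varepsilon)^{N/(N-1)}\rho^N$ with $C=C(N,M)$. To prove it, the co-area formula provides a radius $t\in(\rho/2,\rho)$ with $\HH^{N-1}(\partial B(x,t)\cap U)\le C\varepsilon\rho^{N-1}$. For each fixed $j\in\{1,\dots,k\}$ we apply quasiminimality to the competitor obtained by merging all components of $U$ into $V_j$ within $B(x,t)$ (Remark~\ref{rmk_merging}; the construction and (\ref{eq_merging}) remain valid even if $U$ has infinitely many components). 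Keeping only the $V_j$–$U$ interface on the left of (\ref{eq_merging}) and using $\HH^{N-1}(K\setminus K^*)=0$, we obtain
\begin{equation*}
    \HH^{N-1}\big(B(x,t)\cap K^*\cap\partial V_j\cap\partial U\big)\le C\,\HH^{N-1}(\partial B(x,t)\cap U)\le C\varepsilon\rho^{N-1}.
\end{equation*}
A point of $\partial^*U\cap K^*$ has, among its two adjacent components, exactly one inside $U$ and one inside $W$ (density considerations rule out the other cases), and the latter equals $V_j$ for a unique $j\le k$; hence $\partial^*U\cap B(x,\rho/2)$ is covered, up to an $\HH^{N-1}$-null set, by the $k$ interfaces just estimated, so $\HH^{N-1}(B(x,\rho/2)\cap\partial^*U)\le Ck\varepsilon\rho^{N-1}$. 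Finally, for $\varepsilon$ small the hypothesis forces $\abs{B(x,\rho/2)\cap U}\le\tfrac12\abs{B(x,\rho/2)}$, so the relative isoperimetric inequality applied to $U$ in $B(x,\rho/2)$ (recall $\partial^*U\subset K$) yields the announced bound.

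Now choose $c=c(N,M)$ so small that $C(k\varepsilon)^{N/(N-1)}\le 2^{-N}\varepsilon$ whenever $\varepsilon\le ck^{-N}$; then the one-step estimate shows that the scale-invariant quantity $t^{-N}\abs{B(x,t)\cap U}$ does not increase under halving of $t$, as long as it stays below $ck^{-N}$. Starting from $\abs{B(x_0,r)\cap U}\le ck^{-N}r^N$ (and shrinking $c$ once more to absorb the factor $2^N$ lost when passing from $B(x_0,r)$ to a ball $B(x,r/2)\subset B(x_0,r)$ centered at an arbitrary $x\in K\cap B(x_0,r/2)$), we iterate at $x_0$ and at every such $x$, deducing that $\limsup_{t\to0}t^{-N}\abs{B(x,t)\cap U}$ is strictly smaller than the density $\tfrac12\abs{B(0,1)}$ for every $x\in K\cap B(x_0,r/2)$. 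Since $\partial^*U\subset K$ and $\HH^{N-1}(K\setminus K^*)=0$, this gives $\HH^{N-1}(B(x_0,r/2)\cap\partial^*U)=0$; the relative isoperimetric inequality in $B(x_0,r/2)$ then forces $\abs{B(x_0,r/2)\cap U}=0$ or $\abs{B(x_0,r/2)\setminus U}=0$, and the second case ($U$ filling the ball) is excluded by $\abs{B(x_0,r)\cap U}\le ck^{-N}r^N$. This proves the contrapositive, hence the lemma.

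The only genuinely new difficulty, compared with the proof of Lemma~\ref{lem_af_volume}, is the choice of competitor: one cannot eliminate $U$ near $x_0$ with a single competitor, because on $K$ the reduced boundary $\partial^*U$ is interlaced with the interfaces among $V_1,\dots,V_k$, and the only competitor removing all of $\partial^*U$ at once — merging every component into one — also removes all those interfaces and so carries no information. The resolution is that $\partial^*U$ splits on $K^*$ into the $k$ interfaces $\partial V_j\cap\partial U$, each handled by its own merging competitor; this is precisely where the factor $k$, and hence the exponent $k^{-N}$ after iterating the isoperimetric gain, enters. A minor point requiring Proposition~\ref{prop_generic_points} is the verification that these $k$ interfaces do exhaust $\partial^*U$ up to $\HH^{N-1}$-null sets.
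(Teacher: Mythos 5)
Your proof is correct, and the estimates do come out with the right rate: the one-step decay $\abs{B(x,\rho/2)\cap U}\le C(k\varepsilon)^{N/(N-1)}\rho^N$, obtained by merging $U$ into each $V_j$, $j\le k$, separately and summing the $k$ interface bounds, forces the threshold $\varepsilon\lesssim k^{-N}$, which is exactly the exponent in the statement. The paper proves the lemma from the same two ingredients (the merging competitors of Remark~\ref{rmk_merging} with $J=\{j>k\}$, and the relative isoperimetric inequality for $U=\bigcup_{j>k}V_j$, whose essential boundary is exhausted $\HH^{N-1}$-a.e.\ by the $k$ interfaces $\partial V_i\cap\partial U$, $i\le k$), but exploits them in a different way: it sets $f(s)=\sum_{j>k}\abs{B_s\cap V_j}$, notes that $f'(s)=\sum_{j>k}\HH^{N-1}(\partial B_s\cap V_j)$ for a.e.\ $s$, deduces the differential inequality $f(s)^{(N-1)/N}\le Ck\,f'(s)$ on $(r/2,r)$, and, since $f(r/2)>0$, integrates $(f^{1/N})'\ge (CNk)^{-1}$ once over $(r/2,r)$ to get $f(r)\ge C^{-1}r^N/k^N$ directly. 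So the paper needs neither the contrapositive, nor the infinite dyadic iteration, nor the Lebesgue-density/reduced-boundary endgame; it works on the single range of scales $(r/2,r)$, whereas your argument is an ``infiltration-type'' iteration in the style of Lemma~\ref{lem_af_volume} and Lemma~\ref{lem_infiltration}. Your write-up has the merit of making explicit the decomposition of $\partial^* U$ into the $k$ interfaces (which the paper uses silently in its isoperimetric display), and this is indeed where the factor $k$, hence $k^{-N}$, enters in both proofs. One small presentational point: in your final step the vanishing of $\HH^{N-1}(B(x_0,r/2)\cap\partial^* U)$ follows from the density-$\tfrac12$ property of points of the reduced boundary (as in Lemma~\ref{lem_af_volume}), not from $\HH^{N-1}(K\setminus K^*)=0$, which plays no role there.
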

    \begin{proof}
        If the sequence $(V_i)_{i \geq 1}$ is finite, we complete it by setting $V_i = \emptyset$ so that it is defined for all indices $i \geq 1$.
        Let $x_0 \in K$ and let $r > 0$ be such that $B(x_0,r) \subset \Omega$ and assume that for some $k \geq 1$,
        \begin{equation*}
            \sum_{i > k} \abs{B(x_0,r/2) \cap V_i} > 0.
        \end{equation*}
        For $s \in (0,r]$, we let $B_s$ denote $B(x_0,s)$ and for $i \ne j$, we set
        \begin{equation*}
            V_{ij} =
            \begin{cases}
                \partial V_i \cap \partial V_j &\ \text{if $i \ne j$}\\
                \emptyset                      &\ \text{if $i = j$.}
            \end{cases}
        \end{equation*}
        For $s \in (0,r)$ and $i = 1,\ldots,k$, we merge $(V_j)_{j > k}$ into $V_i$ within $B_s$ to obtain
        \begin{equation*}
            \sum_{j > k} \HH^{N-1}(B_s \cap V_{ij}) \leq C \sum_{j > k} \HH^{N-1}(\partial B_s \cap V_j).
        \end{equation*}
        By the relative isoperimetric inequality applied to $\left(\bigcup_{j > k} B_s \cap V_j\right)$ in $B_s$, we have
        \begin{align*}
            \left(\sum_{j > k} \abs{B_s \cap V_j}\right)^{(N-1)/N}  &\leq \sum_{i=1}^k \sum_{j > k} \HH^{N-1}(B_s \cap V_{ij})\\
                                                                    &\leq C k \sum_{j > k} \HH^{N-1}(\partial B_s \cap V_j).
        \end{align*}
        We set $f(s) := \sum_{j > k} \abs{B_s \cap V_j}$. The function $f$ is absolutely continuous on $(0,r)$, with
        \begin{equation*}
            f'(s) = \sum_{j > k} \HH^{N-1}(\partial B_s \cap V_j) \quad \text{for a.e. $s \in (0,r)$}.
        \end{equation*}
        The previous inequality tells us that for a.e. $s \in (r/2,r)$, we have
        \begin{equation*}
            f(s)^{(N-1)/N} \leq C k f'(s). 
        \end{equation*}
        As $f(r/2) > 0$ and $f(s) \in [f(r/2),f(r)]$ for all $s \in [r/2,r]$, we deduce that
        \begin{equation*}
            f(r)^{1/N} - f(r/2)^{1/N} \geq \frac{C^{-1} r}{k},
        \end{equation*}
        whence
        \begin{equation*}
            f(r) \geq \frac{C^{-1} r^N}{k^N}
        \end{equation*}
        for some bigger constant $C$.
    \end{proof}

\subsection{Applications}

As an application of the local finiteness, we are going to see that for all component $V$ of $\Omega \setminus K$, the boundary $\partial V$ locally satisfies Ahlfors-regularity and condition B. The statements are limited to $\R^2$ and $\R^3$ only because they rely on Theorem~\ref{thm_finiteness}.

\begin{prop}[Ahlfors-regularity for the boundaries]\label{propV_AF}
    Let $K$ be a $M$-quasiminimal set in $\Omega$ and assume that $N = 2$ or $3$.
    For all $x_0 \in K$, for all $r > 0$ such that $B(x_0,r) \subset \Omega$, for all connected components $V$ of $\Omega \setminus K$ such that $V \cap B(x_0,r/2) \ne \emptyset$, we have
    \begin{equation*}
        \abs{\partial V \cap B(x_0,r)} \geq C^{-1} r^N,
    \end{equation*}
    where $C \geq 1$ depends on $M$.
\end{prop}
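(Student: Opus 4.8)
The plan is to reduce the statement to a lower bound on the volume $\abs{B(x_0,\rho)\cap V}$ for $\rho$ in a definite range, and then to obtain that volume bound from quasiminimality. Fix $x_0\in K$, $r>0$ with $B(x_0,r)\subset\Omega$, and a component $V$ of $\Omega\setminus K$ with $V\cap B(x_0,r/2)\neq\emptyset$; write $B_\rho:=B(x_0,\rho)$ and $\phi(\rho):=\abs{B_\rho\cap V}$. By the coarea formula, $\phi$ is absolutely continuous and nondecreasing on $[r/2,3r/4]$ with $\phi'(\rho)=\HH^{N-1}(\partial B_\rho\cap V)$ for a.e.\ $\rho$, and $\phi(r/2)>0$ because $B_{r/2}\cap V$ is a nonempty open set. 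Since Remark~\ref{rmk_isoperimetry} applied in $B_{3r/4}$ (together with $\HH^{N-1}((\partial V\setminus\partial^*V)\cap\Omega)=0$, Proposition~\ref{prop_generic_points}) gives
\begin{equation*}
    \HH^{N-1}(\partial V\cap B_r)\ \geq\ \HH^{N-1}(\partial^*V\cap B_{3r/4})\ \geq\ C^{-1}\phi(3r/4)^{(N-1)/N},
\end{equation*}
it is enough to show $\phi(3r/4)\geq C^{-1}r^N$ with $C=C(N,M)$.

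The heart of the argument is the differential inequality
\begin{equation}\label{eq:plan_ode}
    \phi(\rho)^{(N-1)/N}\ \leq\ C\,\phi'(\rho)\qquad\text{for a.e.\ }\rho\in(r/2,3r/4),
\end{equation}
which I would derive from two ingredients. First, the relative isoperimetric inequality in the simplified form of Remark~\ref{rmk_isoperimetry} yields $\phi(\rho)\leq C\,\HH^{N-1}(\partial V\cap B_\rho)^{N/(N-1)}$. Second — and this is the only place quasiminimality is used — for each component $V'\neq V$ of $\Omega\setminus K$ with $B_\rho\cap\partial V\cap\partial V'\neq\emptyset$, merging $V$ into $V'$ within $B_\rho$ (Remark~\ref{rmk_merging}, inequality~\eqref{eq_merging}) gives $\HH^{N-1}(B_\rho\cap\partial V\cap\partial V')\leq C\,\HH^{N-1}(\partial B_\rho\cap V)$. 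Summing this over all such $V'$ and using that $\sum_{V'}\mathbf 1_{\partial V\cap\partial V'}=\mathbf 1_{\partial V}$ holds on $K^*$ while $\HH^{N-1}(K\setminus K^*)=0$ (Proposition~\ref{prop_generic_points}), one gets $\HH^{N-1}(\partial V\cap B_\rho)\leq C\,m\,\HH^{N-1}(\partial B_\rho\cap V)=C\,m\,\phi'(\rho)$, where $m$ is the number of such $V'$. Each such $V'$ meets $B_\rho\subset B_{3r/4}$, and by local finiteness (Theorem~\ref{thm_finiteness}, applied at a slightly smaller scale after covering $\overline{B}_{3r/4}$ by boundedly many balls contained in $B_r$) the number of components of $\Omega\setminus K$ meeting $B_{3r/4}$ is at most $C(N,M)$; hence $m\leq C(N,M)$, and combining the two ingredients gives \eqref{eq:plan_ode}.

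Granting \eqref{eq:plan_ode}, the conclusion follows by a Gronwall-type amplification: since $\phi\geq\phi(r/2)>0$ on $[r/2,3r/4]$, the function $\rho\mapsto\phi(\rho)^{1/N}$ is absolutely continuous on that interval with a.e.\ derivative $\tfrac1N\phi(\rho)^{(1-N)/N}\phi'(\rho)\geq\tfrac{1}{NC}$ by \eqref{eq:plan_ode}; integrating over $(r/2,3r/4)$ yields $\phi(3r/4)^{1/N}\geq\tfrac{r}{4NC}$, i.e.\ $\phi(3r/4)\geq C^{-1}r^N$, as needed.

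The step I expect to be the main obstacle is the second ingredient above, namely bounding $\HH^{N-1}(\partial V\cap B_\rho)$ by the spherical slice $\HH^{N-1}(\partial B_\rho\cap V)$. This is exactly where quasiminimality enters, through the merging construction of Remark~\ref{rmk_merging}, and it also forces the use of local finiteness to sum over the neighbouring components — which is why the statement is restricted to $N=2$ and $3$. By contrast, the reduction to a volume bound and the integration of the differential inequality are routine; but the amplification step is genuinely necessary, since $V$ may have arbitrarily small volume in $B_{r/2}$, so that no pointwise estimate on a single sphere is available and the information must be propagated across the annulus $r/2<\rho<3r/4$.
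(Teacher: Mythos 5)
Your proof is correct, provided one reads the statement (as the paper's own proof evidently does) as the boundary bound $\HH^{N-1}(\partial V\cap B(x_0,r))\geq C^{-1}r^{N-1}$; the displayed $\abs{\partial V\cap B(x_0,r)}\geq C^{-1}r^{N}$ is a misprint, just as the bound $r^{N-1}$ in Lemma~\ref{lemV_volume} should be $r^{N}$, and this is exactly what you prove. Your route differs from the paper's in mechanism, though not in ingredients. The paper deduces the proposition from the volume lower bound of Lemma~\ref{lemV_volume} plus Remark~\ref{rmk_isoperimetry}, and proves that lemma by the self-improving decay scheme of Lemma~\ref{lem_af_volume}: assuming $\abs{V\cap B(x_0,r)}\leq\varepsilon r^N$, a co-area slice, the choice of one neighbouring component carrying a definite fraction of $\partial V$ (via Theorem~\ref{thm_finiteness} and Proposition~\ref{prop_generic_points}), the merging competitor of Remark~\ref{rmk_merging} and the simplified isoperimetric inequality reproduce the same smallness at scale $r/2$; iterating forces vanishing density of $V$ at every point of $K\cap B(x_0,r/2)$, and a Lebesgue-density argument gives the contradiction. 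You use exactly the same three tools (merging, local finiteness, simplified relative isoperimetry) but package them as the differential inequality $\phi(\rho)^{(N-1)/N}\leq C\,\phi'(\rho)$ for $\phi(\rho)=\abs{B_\rho\cap V}$ and integrate across the annulus, which is precisely the mechanism the paper uses in Lemma~\ref{lem_control_decay} for a different statement. Your version is more direct: no $\varepsilon$-iteration, no density endgame, and it gives an explicit constant; the price is that you need the merging inequality at a.e.\ radius rather than at one well-chosen slice, and you sum over all neighbouring components (bounded in number) instead of selecting the dominant one, which is equally fine. Two small points you gesture at should be spelled out but are routine and consistent with the paper's practice: the count of components meeting $B_{3r/4}$ follows from Theorem~\ref{thm_finiteness} after covering by boundedly many small balls recentred on $K$ (balls disjoint from $K$ meet a single component), and quasiminimality for the merging competitor in $\overline B_\rho$ is applied through slightly larger balls still compactly contained in $\Omega$, which is available since $\rho\leq 3r/4<r$.
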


\begin{prop}[Condition B for the boundaries]\label{propV_conditionB}
    Let $K$ be a $M$-quasiminimal set in $\Omega$ and assume that $N = 2$ or $3$.
    Let $x_0 \in K$, $r > 0$ be such that $B(x_0,r) \subset \Omega$. If $V$ is a connected component of $\Omega \setminus K$ such that $V \cap B(x_0,r/2) \ne \emptyset$, then 
    \begin{equation*}
        \text{both $B(x,r) \cap V$ and $B(x,r) \setminus V$ contain a ball of radius $\geq C^{-1} r$}, 
    \end{equation*}
    where $C \geq 1$ depends on $M$.
\end{prop}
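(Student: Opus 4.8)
The plan is to extract \emph{both} required balls from a single well-chosen ball: I will produce a point $x \in \partial V$ and a radius $s$ comparable to $r$ with $\beta_K(x,s) \le \varepsilon_0$, where $\varepsilon_0$ is small enough (depending only on $N,M$) for Corollary~\ref{lem_plane_separation} and Proposition~\ref{prop_plane_separation} to apply and, say, $\varepsilon_0 < 1/8$. Granting such a ball, let $P$ be a hyperplane through $x$ realising $\beta_K(x,s)$, so $K \cap B(x,s) \subset \{\mathrm{dist}(\cdot,P) \le \varepsilon_0 s\}$. Corollary~\ref{lem_plane_separation} then says the two components $H_1, H_2$ of $\{z \in B(x,s) : \mathrm{dist}(z,P) > \varepsilon_0 s\}$ lie in distinct components $V_1 \supset H_1$, $V_2 \supset H_2$ of $\Omega \setminus K$; since $\varepsilon_0 < 1/8$, each $H_i$ contains a ball of radius $s/8$ centred in $B(x,s/2)$ (take the centre $x + \tfrac s4 n_i$ with $n_i$ the unit normal to $P$ on the $i$-th side), so $V_1$ and $V_2$ both meet $B(x,s/2)$. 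On the other hand Proposition~\ref{prop_plane_separation} guarantees that $B(x,s/2)$ meets \emph{exactly} two components of $\Omega \setminus K$, and $V$ meets $B(x,s/2)$ because $x \in \partial V$; hence $\{V_1,V_2\}$ is precisely the set of components meeting $B(x,s/2)$ and $V \in \{V_1,V_2\}$. The ball inside the $H_i$ equal to $V$ is the desired ball in $B(x_0,r) \cap V$, the ball inside the other $H_i$ lies in $B(x_0,r) \setminus V$, and both sit in $B(x,s/2) \subset B(x_0,r)$ once the centres are placed correctly.

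So the real content is constructing the flat ball centred on $\partial V$. First, since $x_0 \in K$ and $V$ is a component of $\Omega \setminus K$, the point $x_0$ lies in $B(x_0,r/2) \setminus V$ while $V$ meets $B(x_0,r/2)$; connectedness of the ball produces $x_1 \in \partial V \cap B(x_0,r/2)$. Then $B(x_1, r/4) \subset B(x_0, 3r/4) \subset \Omega$, and $V$ meets $B(x_1, r/16)$ because $x_1 \in \partial V$, so Proposition~\ref{propV_AF} gives $\HH^{N-1}(\partial V \cap B(x_1, r/8)) \ge C^{-1} r^{N-1}$. Now I would rerun the contradiction argument of Corollary~\ref{cor_fl}, but restricting the outer integral in the Weak Geometric Lemma to $\partial V \cap B(x_1, r/8) \subset K$ and using this lower bound in place of the Ahlfors-regularity of $K$: the set of $y \in \partial V \cap B(x_1,r/8)$ for which $\beta_K(y,t) \ge \varepsilon_0$ for \emph{all} $t \in (\lambda r, r/8)$ has $\HH^{N-1}$-measure at most $C r^{N-1}/\log\tfrac{1}{8\lambda}$, which drops below $\tfrac12 C^{-1} r^{N-1}$ once $\lambda$ is small (depending on $N,M$). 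Choosing $x$ outside this set and $s := t \in (\lambda r, r/8)$ with $\beta_K(x,s) \le \varepsilon_0$ gives a ball with $B(x,s) \subset B(x_1,r/4) \subset \Omega$ and $B(x,s/2) \subset B(x_0,r)$, which is exactly what the first paragraph needs.

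The main obstacle is keeping the centre of the flat ball on $\partial V$: Corollary~\ref{cor_fl} by itself only yields a flat ball somewhere in $K$, whose centre may have drifted off $\partial V$, leaving no reason for $V$ to be one of the two neighbouring components. Localising the Weak Geometric Lemma to $\partial V$ and feeding in the Ahlfors-regularity of the boundary (Proposition~\ref{propV_AF}) is what repairs this — and this is the only place the restriction $N \in \{2,3\}$ is used. A secondary subtlety is that one genuinely needs Proposition~\ref{prop_plane_separation} (``exactly two components''), not merely Corollary~\ref{lem_plane_separation}: a priori $V$ could be a further component trapped inside the thin strip $\{\mathrm{dist}(\cdot,P) \le \varepsilon_0 s\}$, and only the ``exactly two'' statement (together with $x \in \partial V$) excludes this. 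Everything else — the inscribed-ball computation, the ball inclusions, and the tracking of constants — is routine.
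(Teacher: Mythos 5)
Your proposal is correct and follows essentially the same route as the paper's proof: use the Ahlfors-regularity of $\partial V$ (Proposition \ref{propV_AF}, where $N=2,3$ enters) to localise the Weak Geometric Lemma to $\partial V$ and extract a flat ball of comparable radius centred on $\partial V$, then invoke Proposition \ref{prop_plane_separation} (exactly two components) together with the separation statement to place one inscribed ball in $V$ and one outside $V$. The only cosmetic difference is that you rule out the second component lying in $V$ via Corollary \ref{lem_plane_separation}, whereas the paper cites Lemma \ref{lem_separation}; both work.
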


In order to show Proposition \ref{propV_AF} and Proposition \ref{propV_conditionB}, we start by proving a lower bound on the volume of the components of $\Omega \setminus K$.
\begin{lem}\label{lemV_volume}
    Let $K$ be a $M$-quasiminimal set in $\Omega$ and assume that $N = 2$ or $3$.
    Let $x_0 \in K$, $r > 0$ be such that $B(x_0,r) \subset \Omega$. If $V$ is a connected component of $\Omega \setminus K$ such that $V \cap B(x_0,r/2) \ne \emptyset$, then
    \begin{equation*}
        \abs{V \cap B(x_0,r)} \geq C^{-1} r^{N-1},
    \end{equation*}
    where $C \geq 1$ depends on $M$.
\end{lem}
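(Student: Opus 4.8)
The plan is to establish a self‑improving bound on the relative volume $\rho^{-N}\abs{V\cap B(x_1,\rho)}$ by building merging competitors as in Remark~\ref{rmk_merging}, and then to derive a contradiction at a point of the reduced boundary $\partial^*V$. In the argument $C$ denotes a constant depending only on $M$; recall that since $N\in\set{2,3}$ the local finiteness Theorem~\ref{thm_finiteness} is at our disposal.

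The key step is the following claim: there is $\eta_0=\eta_0(M)\in(0,1)$ such that, if $x_1\in K$ and $\rho>0$ satisfy $B(x_1,\rho)\subset\Omega$ and $\abs{V\cap B(x_1,\rho)}\le\eta_0\rho^N$, then there is $t\in(\rho/4,\rho/2)$ with
\[
    t^{-N}\abs{V\cap B(x_1,t)}\le C\bigl(\rho^{-N}\abs{V\cap B(x_1,\rho)}\bigr)^{N/(N-1)}.
\]
To prove it, first use the co-area formula to choose $t\in(\rho/4,\rho/2)$ with $\HH^{N-1}(\partial B(x_1,t)\cap V)\le 4\rho^{-1}\abs{V\cap B(x_1,\rho)}$. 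By Theorem~\ref{thm_finiteness} at most $C$ components of $\Omega\setminus K$ meet $B(x_1,\rho/2)$, hence the components $U_1,\dots,U_\ell$ with $\partial V\cap\partial U_j\cap B(x_1,t)\ne\emptyset$ satisfy $\ell\le C$; moreover, by Proposition~\ref{prop_generic_points} (namely $\HH^{N-1}(K\setminus K^*)=0$) the sets $\partial V\cap\partial U_j\cap B(x_1,t)$ cover $\partial V\cap B(x_1,t)$ up to an $\HH^{N-1}$‑null set and are pairwise disjoint up to $\HH^{N-1}$‑null sets. For each $j$, the competitor obtained by merging $V$ into $U_j$ within $B(x_1,t)$ gives, via~(\ref{eq_merging}), $\HH^{N-1}(\partial V\cap\partial U_j\cap B(x_1,t))\le C\,\HH^{N-1}(\partial B(x_1,t)\cap V)$; summing over $j\le\ell\le C$,
\[
    \HH^{N-1}(\partial^*V\cap B(x_1,t))\le\HH^{N-1}(\partial V\cap B(x_1,t))\le C\rho^{-1}\abs{V\cap B(x_1,\rho)}.
\]
Since $\abs{V\cap B(x_1,t)}\le\abs{V\cap B(x_1,\rho)}\le\eta_0\rho^N\le\tfrac12\abs{B(x_1,t)}$ once $\eta_0$ is small, the relative isoperimetric inequality applied to $V\cap B(x_1,t)$ in $B(x_1,t)$ gives $\abs{V\cap B(x_1,t)}^{(N-1)/N}\le C\,\HH^{N-1}(\partial^*V\cap B(x_1,t))$, and the claimed inequality follows.

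Next I would iterate. If $\eta_0$ is chosen small enough (depending on $M$) then $C\theta^{N/(N-1)}\le\tfrac12\theta$ for all $\theta\le\eta_0$, so starting from $\abs{V\cap B(x_1,\rho)}\le\eta_0\rho^N$ the decay step can be applied repeatedly at the same centre $x_1$ along radii $\rho>t_1>t_2>\dots$ with $t_{k+1}\in(t_k/4,t_k/2)$; the relative volumes $t_k^{-N}\abs{V\cap B(x_1,t_k)}$ decrease geometrically to $0$, and since consecutive $t_k$ are comparable this yields $\lim_{s\to0}s^{-N}\abs{V\cap B(x_1,s)}=0$. Now assume, for contradiction, that $\abs{V\cap B(x_0,r)}<\eta r^N$ with $\eta:=2^{-N}\eta_0$. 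Then every $x_1\in K\cap B(x_0,r/2)$ satisfies $\abs{V\cap B(x_1,r/2)}\le\abs{V\cap B(x_0,r)}<\eta_0(r/2)^N$, hence $\lim_{s\to0}s^{-N}\abs{V\cap B(x_1,s)}=0$. However $V$ and, by Lemma~\ref{lem_separation}, at least one other component both occupy positive volume in $B(x_0,r/2)$, so the relative isoperimetric inequality forces $\HH^{N-1}(\partial^*V\cap B(x_0,r/2))>0$; choosing $x_1\in\partial^*V\cap B(x_0,r/2)\subset K$, the definition of the reduced boundary gives $\lim_{s\to0}s^{-N}\abs{V\cap B(x_1,s)}>0$, a contradiction. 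Therefore $\abs{V\cap B(x_0,r)}\ge C^{-1}r^N$.

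The main obstacle is the decay step, and specifically the estimate of $\HH^{N-1}(\partial V\cap B(x_1,t))$: it requires summing the merging inequality over every neighbouring component of $V$, so this sum must have a uniformly bounded number of terms. This is exactly where local finiteness — hence the hypothesis $N\le3$ — enters; in higher dimensions $V$ could a priori have infinitely many neighbours and the argument breaks down. A secondary technical point is the bookkeeping needed to ensure that the interfaces of $V$ inside the ball are genuinely exhausted by those of the finitely many counted neighbours, which relies on $\HH^{N-1}(K\setminus K^*)=0$.
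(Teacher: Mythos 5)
Your proof is correct and follows essentially the same route as the paper: a self-improving volume decay obtained by choosing a good sphere via the co-area formula, bounding $\HH^{N-1}(\partial V\cap B)$ through merging competitors (Remark~\ref{rmk_merging}) with the number of neighbours controlled by Theorem~\ref{thm_finiteness} and $\HH^{N-1}(K\setminus K^*)=0$, then the relative isoperimetric inequality, iteration to get vanishing density at every point of $K\cap B(x_0,r/2)$, and a contradiction at a point of $\partial^*V$ where the density is $1/2$. The only cosmetic differences are that you sum the merging inequality over all (boundedly many) neighbouring components where the paper selects a single index carrying a definite fraction of $\HH^{N-1}(\partial V\cap B)$, and your endgame spells out the reduced-boundary argument that the paper compresses into its appeal to Lebesgue's density theorem.
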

\begin{proof}
The proof is in the same spirit as Proposition \ref{prop_af}.
Let $x_0 \in K$ and $r > 0$ be such that $B(x_0,r) \subset \Omega$ and let $V$ be a connected component of $\Omega \setminus K$.
Let $(V_i)_{i \geq 1}$ denote the connected components of $\Omega \setminus K$ with $V_1 = V$.

\emph{Step 1.} We are going to prove that there exists some small $\varepsilon > 0$ (which depends only on $M$) such that
\begin{equation}\label{eq_BBAF1}
    \abs{V \cap B(x_0,r)} \leq \varepsilon r^N \implies \lim_{r \to 0} r^{-N} \abs{V \cap B(x_0,r)} = 0.
\end{equation}
Let us assume that $\abs{V \cap B(x_0,r)} \leq \varepsilon r^N$ for some $\varepsilon > 0$ to be chosen later.
According to the co-area formula
\begin{equation*}
    \abs{V \cap B(x_0,r)} = \int_{0}^r \HH^{N-1}(V \cap \partial B(x_0,t)) \dd{t}
\end{equation*}
we can find a radius $t \in (r/2,r)$ such that
\begin{equation}\label{eq_selection_t}
    \HH^{N-1}(V \cap \partial B(x_0,t)) \leq C \varepsilon r^{N-1}.
\end{equation}
By Theorem~\ref{thm_finiteness}, there exists at most $m$ components of the family $(V_i)_{i \geq 1}$ such that $$V_i \cap B(x_0,r/2) \ne \emptyset,$$ where $m \geq 2$ depends on $M$. We re-order the indices so that $V_i \cap B(x_0,r/2) = \emptyset$ for $i > m$.
By properties of $(V_i)_{i \geq 1}$ stated in Proposition \ref{prop_generic_points}, we have
\begin{equation*}
    \HH^{N-1}(B(x_0,r/2) \cap \partial V) = \sum_{i = 2}^m \HH^{N-1}(B(x_0,r/2) \cap \partial V \cap \partial V_i)
\end{equation*}
so there exists an index $i = 2,\ldots, m$ such that
\begin{equation}\label{eq_selection_i}
    \HH^{N-1}(B(x_0,r/2) \cap \partial V \cap \partial V_i) \geq C^{-1} \HH^{N-1}(B(x_0,r/2) \cap \partial V).
\end{equation}
By merging $V$ into $V_i$ within $B(x_0,t)$, see Remark \ref{rmk_merging}, we know that
\begin{equation*}
    \HH^{N-1}(B(x_0,t) \cap \partial V \cap \partial V_i) \leq \HH^{N-1}(V \cap \partial B(x_0,t))
\end{equation*}
and thus by (\ref{eq_selection_t}), (\ref{eq_selection_i})
\begin{equation*}
    \HH^{N-1}(B(x_0,r/2) \cap \partial V) \leq C \varepsilon r^{N-1}.
\end{equation*}
Then, we apply the relative isoperimetric inequality to $V$ in $B(x_0,r/2)$, see Remark \ref{rmk_isoperimetry},
\begin{equation*}
    \abs{V \cap B(x_0,r/2)} \leq C \HH^{N-1}(B(x_0,r/2) \cap \partial V)^{N/(N-1)} \leq C \varepsilon^{N/(N-1)} r^N.
\end{equation*}
Choosing $\varepsilon$ small enough, we see that
\begin{equation*}
    \abs{V \cap B(x_0,r)} \leq \varepsilon r^N \quad \implies \quad \abs{V \cap B(x_0,r/2)} \leq \varepsilon (r/2)^N
\end{equation*}
and the argument can be iterated to concludes Step 1.

\emph{Step 2.} The parameter $\varepsilon$ being fixed as in Step 1, we observe that if $\abs{V \cap B(x_0,r)} \leq \varepsilon (r/2)^N$, then for all $x \in K \cap B(x_0,r/2)$, we have $\abs{V \cap B(x,r/2)} \leq \varepsilon (r/2)^N$ so
\begin{equation}\label{eq_xV2}
\lim_{r \to 0} r^{-N} \abs{V \cap B(x,r)} = 0. 
\end{equation}
It follows that $V \cap B(x_0,r/2) = \emptyset$ by Lebesgue's density theorem.
This proves the Proposition by contraposition. 
\end{proof}

\begin{proof}[Proof of Proposition \ref{propV_AF}]
    This is a direct application of Lemma \ref{lemV_volume}, together with the relative isoperimetric inequality in $B(x_0,r)$, see Remark \ref{rmk_isoperimetry}.
\end{proof}

\begin{proof}[Proof of Proposition \ref{propV_conditionB}]
    Let $V$ be a connected component of $\Omega \setminus K$.
    Let $x \in V$ and $r > 0$ be such that $B(x,r) \subset \Omega$. If $B(x,r/4) \subset V$, then the proof is over. Otherwise, there exists a point $x_0 \in K \cap B(x,r/4)$ and as $V \cap B(x_0,r/4) \ne \emptyset$, it follows from Proposition \ref{propV_AF} that
    \begin{equation*}
        \HH^{N-1}(\partial V \cap B(x_0,r/2)) \geq C^{-1} r^{N-1}.
    \end{equation*}
    We also know by uniform rectifiability of $K$ that for all $\varepsilon > 0$, there exists a constant $C_0 = C_0(\varepsilon) \geq 1$ (which depends on $N$, $M$, and $\varepsilon$) such that
    \begin{equation*}
        \int_{K \cap B(x_0,r/2)} \int_{0}^{r/2} \mathbf{1}_{\beta(y,t) \geq \varepsilon}(y,t) \frac{\dd{t}}{t} \dd{\HH^{N-1}(y)} \leq C_0 r^{N-1}.
    \end{equation*}
    As $\HH^{N-1}(\partial V \cap B(x_0,r/2)) \geq C^{-1} r^{N-1}$, we prove by contradiction (as in Corollary \ref{cor_fl}) that their exists a point $y_0 \in \partial V \cap B(x_0,r/2)$ and a radius $t \in (C_1^{-1}r,r/2)$ such that
    \begin{equation*}
        \beta(y_0,t) \leq \varepsilon,
    \end{equation*}
    where $C_1 = C_1(\varepsilon) \geq 1$ depends on $N$, $M$ and $\varepsilon)$.
    We choose $\varepsilon$ small enough (depending on $N$, $M$ and $m$) so that Proposition \ref{prop_plane_separation} yields that that $B(y_0,t/2) \setminus K$ has exactly two components and as, $y_0 \in \partial V$, one of them is contained in $V$. The other cannot be contained in $V$ because of Lemma \ref{lem_separation}.
    Finally we observe that since $K$ has a a flatness $\leq 2 \varepsilon$ in $B(y_0,t/2)$, with say $\varepsilon \leq 1/100$, one can build a ball of radius $t/10$ in each component of $B(y_0,t/2) \setminus K$.
\end{proof}

\section{Quasiminimal sets and John domains}

\subsection{Components of \texorpdfstring{$\Omega \setminus K$}{Omega minus K} are local John domains}\label{section_john}

A John domain is a bounded open set $V \subset \R^N$ where any point $z \in V$ can be connected to a center $z_0 \in V$ via a path that gets away fast enough from the boundary.
Examples of John domains are Lipschitz and snowflake domains. The main counter-example are the domains with outward cusps. 

\begin{defi}[John Domain]\label{defi_john}
    Let $V$ be a bounded open set and $z_0 \in V$. We say that $V$ is a John domain of center $z_0$ if there exists a constant $C \geq 1$ such that for all $z \in V$, there exists a $C$-Lipschitz path $\gamma :[0,\ell] \to V$, where $\ell = \abs{z - z_0}$, such that $\gamma(0) = z$, $\gamma(\ell) = z_0$ and
    \begin{equation*}
        \mathrm{dist}(\gamma(t),\R^N \setminus V) \geq C^{-1} t \quad \text{for all $t \in [0,\ell]$}.
    \end{equation*}
\end{defi}
The property also extends to points $z \in \partial V$ by an application of Arzela-Ascoli theorem.
Our goal is to prove that the components of $\Omega \setminus K$ are local John domains in the following sense.
\begin{thm}[Existence of escape paths]\label{thm_john}
    Let $K$ be a $M$-quasiminimal set in $\Omega$ and assume that $N = 2$ or $3$. Let $V$ be a component of $\Omega \setminus K$. Then for all $x \in V$, there exists a $1$-Lipschitz path $\gamma : [0,\ell] \to V$, where $\ell := \mathrm{dist}(x,\partial \Omega)$, such that $\gamma(0) = x$ and
    \begin{equation*}
        \mathrm{dist}(\gamma(t),K) \geq C^{-1} t \quad \text{for all $t \in [0,\ell]$.}
    \end{equation*}
\end{thm}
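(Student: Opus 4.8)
The strategy is to construct the escape path $\gamma$ iteratively on a sequence of dyadic scales converging to $x$, using the fact that in most balls $K$ is flat (Corollary~\ref{cor_fl}), and that a flat ball is genuinely separated into two half-ball-like pieces by $K$ (Corollary~\ref{lem_plane_separation}), so that $V$ occupies a definite fraction of such a ball and contains a ball of controlled radius there (as in Proposition~\ref{propV_conditionB}). The key geometric mechanism is: given that $x \in V$ and $x$ is at distance $\ell$ from $\partial\Omega$, I want to find a point $x_1 \in V$ with $|x_1 - x|$ comparable to $\ell$, with $\mathrm{dist}(x_1, K)$ comparable to $\ell$, and crucially with $x$ and $x_1$ joinable inside $V$ by a path that stays away from $K$ linearly; then iterate from $x$ at the smaller scale $\sim \ell/2$ (or rather, reverse the logic and build the path from small scales up, or from $x$ outward). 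I would set this up as building a chain of ``good balls'' $B(y_k, \rho_k)$ with $\rho_k \sim 2^{-k}\ell$, each contained in $V$, with consecutive balls overlapping in a quantitative way, and $\mathrm{dist}(y_k, K) \geq c\rho_k$; concatenating segments through the $y_k$ and reparametrizing by arclength gives the $1$-Lipschitz path with the stated lower bound on $\mathrm{dist}(\gamma(t), K)$.

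More concretely, I would proceed as follows. First, reduce to producing, for each $z \in V$ and each scale $s \leq \mathrm{dist}(z,\partial\Omega)$, a point $z' \in V$ with $|z'-z| \in [c s, s]$, $\mathrm{dist}(z', K) \geq c s$, and such that the straight segment (or a short controlled path) from $z$ to $z'$ lies in $V$ with distance to $K$ bounded below linearly in the distance travelled. Second, to produce $z'$: apply Corollary~\ref{cor_fl} (with $\varepsilon = \varepsilon_0$ from Corollary~\ref{lem_plane_separation} and Proposition~\ref{prop_plane_separation}) in a ball $B(z, s)$ — but wait, I need the ball centered near $K$, so if $B(z, s/4)\subset V$ I just take $z'$ on $\partial B(z,s/4)$; otherwise there is $x_0 \in K\cap B(z,s/4)$, and in $B(x_0, s/2)$ I find $y_0 \in K\cap B(x_0,s/4)$ and $t \sim s$ with $\beta_K(y_0,t) \leq \varepsilon_0$, so by Corollary~\ref{lem_plane_separation} the two flat half-pieces of $B(y_0,t)$ lie in distinct components of $\Omega\setminus K$; by Proposition~\ref{propV_conditionB} (or directly since $V\cap B(x_0,s/4)\ne\emptyset$ and $V$ meets only finitely many scales here) $V$ is one of them (or reachable), so $V$ contains a ball $B(z', c t)$ with $z'$ on the side of $V$, and $\mathrm{dist}(z', K) \geq c t \sim c s$. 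Third, I must connect $z$ to $z'$ inside $V$ without getting too close to $K$: here I use that $B(z, s/4)$ minus the $\varepsilon_0 t$-neighborhood of the flat plane $P$ has two components each a connected subset of $\Omega\setminus K$ (Corollary~\ref{lem_plane_separation}), and $z$ and $z'$ both lie (or can be connected to points lying) in the $V$-side component, which is a connected open set staying at distance $\gtrsim t$ from $K$ except near its ``far'' boundary on $\partial B$ — a path in this component from $z$ to $z'$ has distance to $K$ bounded below by a constant times $t$, i.e., bounded below linearly in $s$. Iterating with $s = \ell, \ell/2, \ell/4, \dots$ and concatenating yields $\gamma$; arclength reparametrization and the geometric decay of the $\rho_k$ give the $1$-Lipschitz bound and $\mathrm{dist}(\gamma(t),K)\geq C^{-1}t$.

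The main obstacle I anticipate is the connection step — verifying that consecutive good balls can be joined inside $V$ by a path whose distance to $K$ decays only linearly, rather than collapsing. The subtlety is that although $V$ meets the flat ball $B(y_0,t)$ in a large region, a priori the component of $z$ inside $V$ and the ball $B(z',ct)$ could be separated by a thin neck of $V$ pinched against $K$; this is exactly the kind of outward-cusp behavior the theorem rules out, so ruling it out must use quasiminimality in an essential way and not just flatness. I expect the resolution is that in the flat ball, $V$ actually contains the \emph{entire} half-ball $\{\mathrm{dist}(\cdot, P) > \varepsilon_0 t\}$ on one side (this is precisely the content of Corollary~\ref{lem_plane_separation}: the whole component lies in $V$), which is a John domain with uniform constant, so no pinching occurs within a single good ball; the only place two good balls at consecutive scales $2^{-k}\ell$ and $2^{-k-1}\ell$ must be glued is at their overlap, where both are fat half-balls, and the overlap itself contains a ball of radius $\sim 2^{-k}\ell$ inside $V$. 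Making this gluing quantitative and checking the centers $y_k$ can be chosen so that $|y_{k+1} - y_k| \lesssim 2^{-k}\ell$ while maintaining all the flatness hypotheses (which requires the scales $t$ from Corollary~\ref{cor_fl} to be controlled both above and below) is the technical heart; the role of Theorem~\ref{thm_finiteness} is to ensure $\varepsilon_0$ in Proposition~\ref{prop_plane_separation} can be chosen uniformly so that ``exactly two components'' holds, which is what lets us identify $V$ as one definite side at every scale.
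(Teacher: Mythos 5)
There is a genuine gap, and it sits exactly where you yourself flag ``the technical heart'': the connection step. Your ingredients (Corollary~\ref{cor_fl}, Corollary~\ref{lem_plane_separation}, Proposition~\ref{prop_plane_separation}, Proposition~\ref{propV_conditionB}) do produce, at each scale $s$, a fat ball $B(z',cs)\subset V$ within distance $\sim s$ of $z$; that part is fine. But they give no mechanism for joining $z$ (which may be at distance $\ll s$ from $K$) to that fat ball inside $V$ with distance to $K$ bounded below linearly in the length travelled. The half-ball statement of Corollary~\ref{lem_plane_separation} only rules out pinching \emph{inside the one flat ball} $B(y_0,t)$; it says nothing about the region between $z$ and that ball, where $V$ may a priori reach as a cusp or thin neck pinched against $K$ (e.g.\ a region like $\{|y|<x^2\}$, where any escape path satisfies $\mathrm{dist}(\gamma(t),K)\sim t^2$, violating the linear bound). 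Likewise, your gluing claim for consecutive scales --- ``the overlap itself contains a ball of radius $\sim 2^{-k}\ell$ inside $V$'' --- is unsupported: the good flat balls furnished by Corollary~\ref{cor_fl} at scales $2^{-k}\ell$ and $2^{-k-1}\ell$ come with no positional control relative to one another, need not overlap, and even when two fat sub-balls of $V$ are adjacent in space, producing a quantitatively wide connection between them inside $V$ is precisely the John property being proven; it cannot be extracted from flatness of \emph{some} nearby ball at each scale, since the path must also traverse the locations where no flatness information is available. As you note, excluding the cusp scenario must use quasiminimality at this step, but your argument only invokes quasiminimality through the earlier regularity statements, none of which encode it.

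The paper supplies the missing quantitative input differently: it first proves Proposition~\ref{prop_isometry}, a local relative isoperimetric inequality $\abs{Z}^{(N-1)/N}\leq C\,\HH^{N-1}(\partial Z\cap V)$ for open subsets $Z\subset V\cap B(x_0,r)$, by a genuinely new competitor argument (merging $Z$ into a neighbouring component $V_i$, implemented through a Vitali covering to keep the competitor closed). This isoperimetric inequality is exactly what forbids thin necks and cusps: a pinched piece of $V$ would have large volume but small interior boundary. Combined with the local Ahlfors regularity of $\partial V$ (Proposition~\ref{propV_AF}) and condition B for $\partial V$ (Proposition~\ref{propV_conditionB}), the method of David--Semmes \cite[Theorem 6.1]{DS98} then yields the one-scale escape statement (Lemma~\ref{lem_john_local}): from any $z\in V$ with $\mathrm{dist}(z,K)\geq r$ one can reach a point at distance $\geq 2r$ from $K$ while staying at distance $\geq C_0^{-1}r$ from $K$; the multiscale concatenation you envisage is then carried out only at this last stage, where it is indeed routine. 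To repair your proposal you would need either to prove an isoperimetry-type inequality for $V$ (i.e.\ essentially Proposition~\ref{prop_isometry}) or to build a bespoke competitor ruling out the neck between $z$ and the fat ball; without such a step the chain construction does not close.
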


This property states that any point $z \in V$ has an escape path that gets away fast enough from $K$. This provides a good nontangential access region to $z$. In contrast to Definition \ref{defi_john} however, it does not request a single center where all escape paths ends. The existence of escape paths has the following easy consequence. 

\begin{cor}[Local truncations are John domains]
    There exists a constant $C_0 \geq 2$ which depends only on $M$ such that the following holds true.
    Let $K$ be a $M$-quasiminimal set in $\Omega$ and assume that $N = 2$ or $3$.
    For all $x_0 \in K$ and $r > 0$ with $B(x_0,C_0 r) \subset \Omega$ and for all connected component $V$ of $\Omega \setminus K$, the set
    \begin{equation}\label{eq_john_truncate}
        [V \cap B(x_0,2 r)] \cup [B(x_0,2 r) \setminus \overline{B}(x_0,r)]
    \end{equation}
    is a John domain with constant $C_0$.
\end{cor}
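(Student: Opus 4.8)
## Proof plan for the Corollary

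The plan is to deduce this from Theorem~\ref{thm_john} (existence of escape paths) by a localization argument. Fix $x_0 \in K$, $r>0$ with $B(x_0, C_0 r) \subset \Omega$, and a component $V$ of $\Omega \setminus K$. Write $U := [V \cap B(x_0,2r)] \cup [B(x_0,2r) \setminus \overline{B}(x_0,r)]$ for the set in \eqref{eq_john_truncate}. First I would check that $U$ is open, bounded and connected: openness is clear since both pieces are open; boundedness is clear; connectedness follows because the annular shell $A := B(x_0,2r)\setminus\overline{B}(x_0,r)$ is connected and every point of $V\cap B(x_0,2r)$ can be joined to $A$ — indeed, by Theorem~\ref{thm_john} applied at such a point, or more simply by Proposition~\ref{prop_isolated} which prevents $V$ from being trapped inside $B(x_0,2r)$, so $V$ reaches $\partial B(x_0,2r)$ and hence meets $A$ along a path inside $V \cap B(x_0,2r) \subset U$. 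The natural center to pick is $z_0 := $ a fixed point of $A$ on the sphere $\partial B(x_0, 3r/2)$, say, so that $\mathrm{dist}(z_0, \R^N \setminus U) \geq r/2$.

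Next, given $z \in U$, I would build the John path from $z$ to $z_0$ by concatenating two pieces. If $z \in A$, one travels within the annulus $A$ to $z_0$ along a path that stays at controlled distance from $\partial A$ (a straight radial-then-circular path works, giving distance comparable to $\min(|z - x_0| - r,\, 2r - |z-x_0|,\, r)$, which near the start point is comparable to $\mathrm{dist}(z, \R^N\setminus U)$ up to a bounded factor), then concatenate with the fixed bounded path from $A$ to $z_0$ inside $A$. If $z \in V \cap B(x_0,2r)$, apply Theorem~\ref{thm_john} to get a $1$-Lipschitz escape path $\gamma$ starting at $z$ with $\mathrm{dist}(\gamma(t), K) \geq C^{-1} t$. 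I would run $\gamma$ until the first time $t_*$ it exits $B(x_0, 3r/2)$ (it does exit, since $\ell = \mathrm{dist}(z,\partial\Omega) \geq (C_0 - 2)r$ is large and $\mathrm{dist}(\gamma(t),K)\geq C^{-1}t$ forces $\gamma$ away from $x_0$ for $t$ of order $r$ — more carefully, $\gamma$ cannot stay in $\overline{B}(x_0,3r/2)$ forever because its distance to $K \ni x_0$... here one uses that the portion of $\gamma$ inside $B(x_0,2r)$ avoids $K$, hence lies in $V\subset U$, and that $\gamma$ eventually leaves any fixed ball since $\ell$ is large). On $[0,t_*]$ the path stays in $V \cap B(x_0,3r/2) \subset U$ and satisfies $\mathrm{dist}(\gamma(t), \R^N\setminus U) \geq \min(C^{-1}t,\, \mathrm{dist}(\gamma(t),\partial B(x_0,2r)))$; since $|\gamma(t) - x_0| \leq 3r/2$ on this interval, the second term is $\geq r/2 \geq C^{-1} t_* / 2 \cdot(\text{const})$ once we note $t_* \lesssim r$ (as $\gamma$ is $1$-Lipschitz and must travel distance $\geq r/2$). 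So the lower bound $\mathrm{dist}(\gamma(t),\R^N\setminus U) \gtrsim t$ holds on $[0,t_*]$; at $\gamma(t_*) \in A$ we splice with the annular path to $z_0$ described above.

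Finally one checks the Lipschitz and reparametrization bookkeeping: concatenating two $C$-Lipschitz paths and rescaling the total parameter to $[0,|z-z_0|]$ costs only a bounded factor, and the distance-to-complement lower bound $\mathrm{dist}(\gamma(t),\R^N\setminus U)\geq C_0^{-1}t$ survives reparametrization up to adjusting $C_0$. The constant depends only on the constant $C$ from Theorem~\ref{thm_john} (hence only on $M$, since $N\in\{2,3\}$) and on dimensional constants for the annular path, so we may fix $C_0$ depending only on $M$ and simultaneously demand $C_0 \geq 2$ large enough that $B(x_0,C_0 r)\subset\Omega$ gives $\ell = \mathrm{dist}(z,\partial\Omega) \geq (C_0-2)r$ large enough for the exit-time argument. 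The main obstacle is the last gluing step: one must verify that the escape path $\gamma$ from Theorem~\ref{thm_john}, which is only controlled relative to $K$ and not relative to $\partial B(x_0,2r)$, actually reaches the annulus $A$ while staying inside $U$ and while keeping the distance bound — i.e. reconciling the ``distance from $K$'' estimate with the ``distance from $\R^N\setminus U$'' estimate near the artificial spherical boundary. This is handled by stopping at $t_*$ and using that on $[0,t_*]$ the only relevant part of $\R^N\setminus U$ other than $K$ is $\partial B(x_0,2r)$, which is at distance $\geq r/2$ while $t_* \lesssim r$.
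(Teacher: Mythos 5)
Your proposal is correct and follows exactly the route the paper intends: the paper states this corollary without proof as an ``easy consequence'' of Theorem~\ref{thm_john}, and your argument (run the escape path until it first leaves $B(x_0,3r/2)$ — noting it must, since $x_0\in K$ and $\mathrm{dist}(\gamma(t),K)\geq C^{-1}t$ — then splice with a John path in the annulus and reparametrize) is precisely that deduction. Two harmless slips: the parenthetical justifying $t_*\lesssim r$ is backwards (the bound comes from the escape estimate against $x_0\in K$, which you already invoked, not from the $1$-Lipschitz travel distance), and connectedness of the truncated set needs the escape-path argument rather than Proposition~\ref{prop_isolated} alone, since a component of $V\cap B(x_0,2r)$ could a priori be trapped in $\overline{B}(x_0,r)$ even though $V$ itself is not.
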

This is a quantitative strengthening of the fact that $V$ cannot stay within $B(x_0,r)$, seen in Proposition \ref{prop_isolated}. An alternative way of truncating of $V$ could be to take the intersection $V \cap B(x_0,r)$ but this does not produce a John domain in general as $V \cap B(x_0,r)$ can be disconnected or have an outward cusp near $\partial B(x_0,r) \cap \partial V$.

John domains are closely related to domains of isometry \cite[Definition 5.1]{DS98}, whose definition we recall below.
\begin{defi}
    An open set $V \subset \R^N$ is a domain of isoperimetry if there exists a constant $C \geq 1$ such that for all open subset $Z \subset V$, we have
    \begin{equation*}
        \min(\abs{Z},\abs{V \setminus Z})^{(N-1)/N} \leq C \HH^{N-1}(\partial Z \cap V).
    \end{equation*}
\end{defi}
Bojarski \cite{Bojarski} proved that John domains are domains of isoperimetry but the converse is not true in general, see for instance the generalized Eiffel tower in \cite[Remark 6.2]{DS98}.
There are however some partial converse results due to Buckley--Koskela \cite[Theorem 1.1]{Buckley} and David--Semmes \cite[Theorem 6.1]{DS98}. We are going to establish first in Proposition \ref{prop_isometry} that the components of $\Omega \setminus K$ satisfy a local relative isoperimetric inequality. We will then deduce Theorem~\ref{thm_john} using the method of \cite[Theorem 6.1]{DS98}.

\begin{prop}\label{prop_isometry}
    Let $K$ be a $M$-quasiminimal set in $\Omega$ and assume that $N = 2$ or $3$.
    Then, for all $x_0 \in K$ and $r > 0$ such that $B(x_0,2r) \subset \Omega$, for all connected component $V$ of $\Omega \setminus K$, and for all open subset $Z \subset V \cap B(x_0,r)$, one has
    \begin{equation*}
        \abs{Z}^{(N-1)/N} \leq C \HH^{N-1}(\partial Z \cap V),
    \end{equation*}
    where $C \geq 1$ depends only on $M$.
\end{prop}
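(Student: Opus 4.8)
\section*{Proof proposal for Proposition \ref{prop_isometry}}

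The plan is to reduce the statement, by way of the relative isoperimetric inequality in the ball, to an estimate comparing the common reduced boundary $\partial^* Z \cap \partial^* V$ with the relative perimeter $\HH^{N-1}(\partial Z \cap V)$, and then to prove that estimate by a merging competitor controlled through the local finiteness Theorem~\ref{thm_finiteness}.

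First I would reduce. We may assume $a := \HH^{N-1}(\partial Z \cap V) < +\infty$; then $\HH^{N-1}(\partial Z) \le a + \HH^{N-1}(K \cap B(x_0,r)) + \HH^{N-1}(\partial B(x_0,r)) < +\infty$ by Ahlfors-regularity, so $Z$ has finite perimeter. Since $\abs{B(x_0,r) \setminus Z} \ge \abs{B(x_0,r) \setminus V} \ge C^{-1} r^N$ by Lemma~\ref{lem_af_volume}, the relative isoperimetric inequality in $B(x_0,r)$, applied after distinguishing the cases $\abs{Z} \le \tfrac12\abs{B(x_0,r)}$ and $\abs{Z} > \tfrac12\abs{B(x_0,r)}$, gives $\abs{Z}^{(N-1)/N} \le C\, \HH^{N-1}(\partial^* Z \cap B(x_0,r))$. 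As $\overline{Z} \subset \overline{V}$ and $B(x_0,r) \subset \Omega$, we have $\partial^* Z \cap B(x_0,r) \subset V \cup \partial V$; using $\HH^{N-1}(\partial V \setminus \partial^* V) = 0$ (Proposition~\ref{prop_generic_points}) and splitting, $\HH^{N-1}(\partial^* Z \cap B(x_0,r)) \le \HH^{N-1}(\partial Z \cap V) + \HH^{N-1}(\Sigma)$, where $\Sigma := \partial^* Z \cap \partial^* V \cap B(x_0,r)$. It thus suffices to prove $\HH^{N-1}(\Sigma) \le C a$.

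For this I would use that $\HH^{N-1}$-a.e.\ point of $\Sigma$ lies in $K^*$, hence has a well-defined ``other'' component; since at most $C = C(M)$ components meet $B(x_0,r)$ (Theorem~\ref{thm_finiteness} applied with radius $2r$), $\Sigma$ splits, up to an $\HH^{N-1}$-null set, into finitely many pieces $\Sigma_\beta$, where $\Sigma_\beta$ gathers the points of $\Sigma \cap K^*$ whose other component is $U_\beta$. For each $\beta$ one estimates $\HH^{N-1}(\Sigma_\beta) \le Ca$ by a covering-and-merging argument in the spirit of Lemma~\ref{lem_infiltration}: given $\varepsilon > 0$, cover $\Sigma_\beta$ up to measure $\varepsilon$ by a finite disjoint family of balls $B_k = B(y_k,t_k) \subset\subset B(x_0,r)$ with $y_k \in \Sigma_\beta$ in which $K$ is $\varepsilon$-flat along a hyperplane $P_k$, $Z$ occupies the $V$-side of $B_k$ up to volume $\varepsilon\abs{B_k}$, and $\HH^{N-1}(\partial Z \cap V \cap B_k) \le \varepsilon t_k^{N-1}$ — all available at $\HH^{N-1}$-a.e.\ $y \in \Sigma_\beta$, because $\beta_K(y,t) \to 0$ and, $\Sigma_\beta$ being disjoint from $\partial Z \cap V$, the density $t^{1-N}\HH^{N-1}(\partial Z \cap V \cap B(y,t))$ tends to $0$ there. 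By Corollary~\ref{lem_plane_separation} and Proposition~\ref{prop_plane_separation}, in each $B_k$ the complement of $K$ is split into the $V$-side and the $U_\beta$-side. Setting
\begin{equation*}
    F := \Bigl(K \setminus \bigcup_k B_k\Bigr) \cup \bigcup_k \set{x \in \partial B_k : \mathrm{dist}(x,P_k) \le \varepsilon t_k} \cup \overline{\partial Z \cap V},
\end{equation*}
one checks that $F$ is a topological competitor of $K$ in $\overline{B}(x_0,r)$: deleting $K \cap B_k$ merges the $V$-side with the $U_\beta$-side, the equatorial bands force any entering path to pass through $V$ or $U_\beta$, and $\overline{\partial Z \cap V}$ keeps $Z$ sealed off from $V \setminus \overline{Z}$, so no components other than $V$ and $U_\beta$ become connected. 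Quasiminimality~\eqref{eq_quasi}, together with $\overline{\partial Z \cap V} \setminus K = \partial Z \cap V$, Ahlfors-regularity of $K$ and the disjointness of the balls, then yields $C^{-1}\sum_k t_k^{N-1} \le \HH^{N-1}(K \setminus F) \le M\,\HH^{N-1}(F \setminus K) \le C(a + \varepsilon \sum_k t_k^{N-1})$, hence $\HH^{N-1}(\Sigma_\beta) \le C(a + \varepsilon)$; letting $\varepsilon \to 0$ and summing over the $\le C(M)$ indices $\beta$ gives $\HH^{N-1}(\Sigma) \le Ca$.

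The main obstacle is this competitor step. As in Lemma~\ref{lem_infiltration}, the ``lakes of Wada'' phenomenon prevents deleting the $V$--$U_\beta$ interface directly, since it need not be relatively open in $K$, which is what forces the covering. More delicate still is that the closed set $\overline{\partial Z \cap V}$, whose trace on $\partial V$ is a priori not $\HH^{N-1}$-controlled, could in principle cancel the gain obtained from deleting $K \cap \bigcup_k B_k$; ruling this out requires a careful choice of the covering balls, using at $\HH^{N-1}$-a.e.\ point of $\Sigma_\beta$ the vanishing density of $\partial Z \cap V$ and a relative-isoperimetry argument inside each $B_k$ to see that $Z$ genuinely fills the $V$-side there and that $\overline{\partial Z \cap V}$ meets $\partial V \cap B_k$ only in a set of small measure.
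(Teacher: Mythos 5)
Your skeleton is the paper's: reduce by isoperimetry to bounding the interface $\partial^* Z \cap \partial V$ by $C\,\HH^{N-1}(\partial Z \cap V)$, use local finiteness to handle the finitely many neighbouring components, and prove the interface bound by a Vitali covering plus a merging competitor and quasiminimality. The gap is in the competitor itself: as written, your $F$ is not a topological competitor. The only spherical pieces you add are the equatorial bands, whereas the paper adds $\partial B(y_k,t_k) \cap V \setminus Z$, and this cannot be dispensed with. The $V$-side cap of $\partial B_k$ may contain points of $V \setminus \overline{Z}$ (your hypotheses control the volume of $B_k \cap V \setminus Z$ and the measure of $\partial Z \cap V \cap B_k$, not the trace of $V \setminus Z$ on the chosen sphere, and in any case smallness is beside the point: that set must be \emph{included} in $F$). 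A point $x \in V \setminus B(x_0,r)$ can then travel inside a thin channel of $V \setminus \overline{Z}$, which never touches $\overline{\partial Z \cap V}$, enter $B_k$ through the $V$-cap outside the band, cross the deleted set $K \cap B_k$, and exit through the $U_\beta$-cap towards a point $y \in U_\beta \setminus B(x_0,r)$. Such $x,y$ are separated by $K$ but not by $F$, so (\ref{eq_quasi}) cannot be invoked. Your own summary -- ``deleting $K \cap B_k$ merges the $V$-side with the $U_\beta$-side \dots no components other than $V$ and $U_\beta$ become connected'' -- describes exactly the forbidden event: only $Z$, which has no points outside $B(x_0,r)$, may be merged with the neighbouring component, and sealing the exterior part of $V$ off from the holes also \emph{on the spheres} is precisely the role of the pieces $\partial B_k \cap V \setminus Z$; the paper can afford them because $V \setminus Z$ has vanishing volume density at $\HH^{N-1}$-a.e.\ point of $\partial^* Z \cap \partial^* V$ and the radius $t_k$ is selected by coarea so that $\HH^{N-1}(\partial B_k \cap V \setminus Z) \le \varepsilon t_k^{N-1}$.

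The second problem is your use of $\overline{\partial Z \cap V}$ together with the proposed repair ``choose the balls so that $\overline{\partial Z \cap V}$ meets $\partial V \cap B_k$ only in a set of small measure''. This is impossible in general: take $V$ locally a half-space and $Z$ the upper half-ball minus a family of tiny disjoint closed balls centred at points $(q,h_{q,n})$ with $q$ running over a countable dense subset of the equatorial disc and $h_{q,n} \to 0$, the radii so small that their total boundary area is negligible. Then $\overline{\partial Z \cap V}$ contains the whole disc, hence $\HH^{N-1}$-almost all of $\partial^* Z \cap \partial^* V$, although the measure of $\partial Z \cap V$ near the disc is arbitrarily small; in every admissible ball the loss $\HH^{N-1}(K \cap B_k \cap \overline{\partial Z \cap V})$ equals $\HH^{N-1}(K \cap B_k)$ up to a null set, the gain in $\HH^{N-1}(K \setminus F)$ is wiped out, and quasiminimality yields nothing, whatever covering you choose. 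The paper's proof avoids this by inserting $\partial Z \cap V$ itself, not its closure, into $F$ (noting that $K \cup (\partial Z \cap V)$ is relatively closed because $\partial Z \subset \overline{V}$ and $\Omega \cap \partial V \subset K$): since $K \cap V = \emptyset$, one then has $K \setminus F = K \cap \bigcup_k B_k$ with no loss, while the extra points of $\overline{\partial Z \cap V} \setminus (\partial Z \cap V)$ lie in $K$ and cost nothing on the right-hand side. Your reduction step and the finite decomposition of $\Sigma$ according to the neighbouring component are fine and correspond to the paper's pigeonhole over the at most $m = m(M)$ components given by Theorem~\ref{thm_finiteness} and Proposition~\ref{prop_generic_points}, but the competitor must be built as in the paper for the argument to close.
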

\begin{proof}
    Let $x_0 \in K$ and $r > 0$ be such that $B(x_0,2r) \subset \Omega$.
    Let $V$ be a connected component of $\Omega \setminus K$ and let $Z$ be an open subset of $V \cap B(x_0,r)$.
    We let $(V_i)_{i \geq 1}$ denote the components of $\Omega \setminus K$ with $V_1 = V$. Since $B(x_0,r)$ meets a most $m$ components (where $m \geq 2$ depends on $M$), we can order the family so that for all $i > m$, $V_i \cap B(x_0,r) = \emptyset$. According the properties of $(V_i)_{i \geq 1}$ (Proposition \ref{prop_generic_points}), we have
    \begin{equation*}
        \HH^{N-1}(\partial^* Z \cap \partial V) = \sum_{i=2}^m \HH^{N-1}(\partial Z \cap \partial V \cap \partial V_i)
    \end{equation*}
    so we can find an index $i = 1,\ldots,m$ such that
    \begin{equation}\label{eq_iso1}
        \HH^{N-1}(\partial^* Z \cap \partial V) \leq C \HH^{N-1}(\partial Z \cap \partial V \cap \partial V_i),
    \end{equation}
    where $C \geq 1$ depends on $m$.
    Now we want to prove that
    \begin{equation}\label{eq_iso2}
        \HH^{N-1}(\partial^* Z \cap \partial V \cap \partial V_i) \leq M \HH^{N-1}(\partial Z \cap V),
    \end{equation}
    by merging $Z$ in $V_i$, that is, building a competitor by removing $K \cap B(x_0,r) \cap \partial Z \cap \partial V$ and adding $\partial Z \cap V$.
    From (\ref{eq_iso1}) and (\ref{eq_iso2}), we would have
    \begin{equation*}
        \HH^{N-1}(\partial^* Z \cap \partial V) \leq C \HH^{N-1}(\partial Z \cap V).
    \end{equation*}
    and finally by applying the isoperimetric inequality in $\R^N$, we would conclude
    \begin{align*}
        \abs{Z}^{(N-1)/N} &\leq C \HH^{N-1}(\partial^* Z)\\
                &\leq C \HH^{N-1}(\partial^* Z \cap \partial V) + C \HH^{N-1}(\partial^* Z \cap V)\\
                &\leq C \HH^{N-1}(\partial Z \cap V).
    \end{align*}

As in Lemma \ref{lem_infiltration}, the proof of (\ref{eq_iso2}) by merging poses a technical difficulty because competitors must be closed subset of $\Omega$. We are going to build the competitor by a covering argument. We first justify that for $\HH^{N-1}$-a.e. $x \in B(x_0,r) \cap \partial^* Z \cap \partial V \cap \partial V_i$, one can find arbitrary small balls centered on $x$ with good properties. By standard theorems on reduced boundaries \cite[Theorem 3.61 and Example 3.68]{AFP}, we know that for $\HH^{N-1}$-a.e. $x \in \partial^* Z$, there exists a unit vector $\nu_0 \in \R^N$ such that
    \begin{equation}\label{eq_ZDH}
        \lim_{t \to 0} t^{-N} \abs{B(x,t) \cap Z \Delta H_0^+} = 0,
    \end{equation}
    where $H_0^+ = \set{y \in \R^N | (y - x) \cdot \nu_0 \geq 0}$.
    Similarly, since $\HH^{N-1}(\Omega \cap \partial V \setminus \partial^* V) = 0$ (Proposition \ref{prop_generic_points}), then for $\HH^{N-1}$-a.e. $x \in \Omega \cap \partial V$, there exists a unit vector $\nu_1 \in \R^N$ such that
    \begin{equation}\label{eq_VDH}
        \lim_{t \to 0} t^{-N} \abs{B(x,r) \cap V \Delta H_1^+} = 0,
    \end{equation}
    where $H_1^+ = \set{y \in \R^N | (y - x) \cdot \nu_1 \geq 0}$.
    At a point $x$ where both (\ref{eq_ZDH}) and (\ref{eq_VDH}) hold, we see that we must actually have $\nu = \nu_1$ and thus
    \begin{equation*}
        \lim_{t \to 0} t^{-N} \abs{B(x,t) \cap V \setminus Z} = 0.
    \end{equation*}
    We observe finally that for all small $t > 0$, the co-area formula
    \begin{equation*}
        \abs{B(x,t) \cap V \setminus Z} = \int_0^t \HH^{N-1}(\partial B(s,t) \cap V \setminus Z) \dd{s}
    \end{equation*}
    allows to find a radius $s \in (t/2,t)$ such that
    \begin{equation*}
        \HH^{N-1}(\partial B(x,s) \cap V \setminus Z) \leq C t^{-1} \abs{B(x,t) \cap V \setminus Z}.
    \end{equation*}
    We can also assume that $s$ satisfies $\HH^{N-1}(K \cap \partial B(x,s)) = 0$ since this holds for a.e. $s \in (0,t)$.

    Now, we fix $\varepsilon > 0$ and we apply the Vitali covering theorem \cite[Theorem 2.8]{mattila} to find a family of balls $(B(y_k,t_k))_{k}$, where $y_k \in B(x_0,r) \cap \partial^* Z \cap \partial V \cap \partial V_i$ and $t_k > 0$, such that the closed balls $(\overline{B}(y_k,t_k))_k$ are disjoint,
    \begin{equation}\label{eq_coveringZ}
        \HH^{N-1}\Bigl(B(x_0,r) \cap \partial^* Z \cap \partial V \cap \partial V_i \setminus \bigcup_k \overline{B}(y_k,t_k)\Bigr) = 0
    \end{equation}
    and for all $k$,
    \begin{enumerate}[label = (\roman*)]
    \item $\overline{B}(y_k,t_k) \subset B(x_0,r)$;
    \item $\HH^{N-1}(K \cap \partial B(y_k,t_k)) = 0$;
    \item $\overline{B}(y_k,t_k)$ meets exactly two components of $\Omega \setminus K$;
    \item $\HH^{N-1}(\partial B(y_k,t_k) \cap V \setminus Z) \leq \varepsilon t_k^{N-1}$.
\end{enumerate}
Note that the balls are centred in $K$ since $\partial V \subset K$. As $\HH^{N-1}(B(x_0,r) \cap \partial^* Z \cap \partial V_i) < +\infty$, we can assume that the family is finite but at the price of replacing (\ref{eq_coveringZ}) by 
    \begin{equation}\label{eq_coveringZ2}
        \HH^{N-1}\Bigl(B(x_0,r) \cap \partial^* Z \cap \partial V \cap \partial V_i \setminus \bigcup_k \overline{B}(y_k,t_k)\Bigr) \leq \varepsilon. 
    \end{equation}
    We define
    \begin{equation*}
        F := \Bigl(K \setminus \bigcup_{k} B(y_k,t_k)\Bigr) \cup \Bigl(\bigcup_{k} \partial B(y_k,t_k) \cap V \setminus Z\Bigr) \cup \Bigl(\partial Z \cap V\Bigr).
    \end{equation*}
    We then justify that $F$ is a topological competitor of $K$ in $\overline{B}(x_0,r)$.
    Since $Z$ is open and $\Omega \cap \partial V \subset K$, we see that $K \cup (\partial Z \cap V)$ and $K \cup (\partial B(y_k,t_k) \cap V \setminus Z)$ are relatively closed subsets of $\Omega$.
    Therefore, $F$ is relatively closed in $\Omega$ as a finite union of relatively closed sets.
    Now, let $x$ and $y$ be two points in $\Omega \setminus (\overline{B}(x_0,r) \cup K)$ which are connected by a path in $\Omega \setminus F$. We want to show that $x$ and $y$ lie in the same component of $\Omega \setminus K$.
    Assuming by contradiction that the two points $x$ and $y$ are not in the same component of $\Omega \setminus K$, then the path should meet $K$ and this can only happen within the interior of a ball $B(y_k,t_k)$. We consider the portion $\gamma_x$ of the path which starts from $x$ and meet a ball $\partial B(y_k,t_k)$ for the first time at some point $z_x$. Note that $\gamma_x$ connects $x$ to $z_x$ in $\Omega \setminus (K \cup F)$. In view of the definition of $F$, we see that $z$ is either in $V_i$ or in $Z$. We can observe similarly that $y$ is connected by a path $\gamma_y$ in $\Omega \setminus (K \cup F)$ to a point $z_y$ which is either in $V_i$ or in $V$. Since $x$ and $y$ cannot be both connected to a point of $V_i$ in $\Omega \setminus K$, either $z_x$ or $z_y$ must be in $Z$, let's say $z_x$. We finally reach a contradiction because if $\gamma_x$ connects $x$ to $z_x \in Z$ in $\Omega \setminus K$, then it must cross $\partial Z \setminus K \subset \partial Z \cap V \subset F$.
    We conclude that $F$ is a topological competitor of $K$ in $\overline{B}(x_0,r)$.

    By applying the quasiminimality property (\ref{eq_quasi}) in $\overline{B}(x_0,r)$,
    \begin{equation*}
        \HH^{N-1}(K \setminus F) \leq M \HH^{N-1}(F \setminus K),
    \end{equation*}
    we get
    \begin{align*}
        \HH^{N-1}(K \cap \bigcup_{k} B(y_k,t_k)) &\leq M \HH^{N-1}(\partial Z \cap V) + M \sum_{k} \HH^{N-1}(\partial B(y_k,t_k) \cap V \setminus Z)\\
                                                  &\leq M \HH^{N-1}(\partial Z \cap V) + C \varepsilon \sum_{k} t_k^{N-1}.
    \end{align*}
    By (\ref{eq_coveringZ2}) and the properties of the family of balls, we have
    \begin{align*}
        \HH^{N-1}(B(x_0,r) \cap \partial^* Z \cap \partial V \cap \partial V_i) &\leq \HH^{N-1}(K \cap \bigcup_k \overline{B}(y_k,t_k)) + \varepsilon\\
                                                                                &\leq \HH^{N-1}(K \cap \bigcup_k B(y_k,t_k)) + \varepsilon.
    \end{align*}
    Since the balls $(B(y_k,t_k))_k$ are disjoint and contained in $B(x_0,r)$ and since $K$ is Ahlfors-regular, we can also bound from above
    \begin{equation*}
        C \varepsilon \sum_{k} t_k^{N-1} \leq C \varepsilon \HH^{N-1}(B(x_0,r) \cap K);
    \end{equation*}
    in particular this term goes to $0$ when $\varepsilon \to 0$. We conclude that
    \begin{equation*}
        \HH^{N-1}(B(x_0,r) \cap \partial^* Z \cap \partial V \cap \partial V_i) \leq C \HH^{N-1}(\partial Z \cap V),
    \end{equation*}
    where $C \geq 1$ depends on $N$ and $M$.
    This proves (\ref{eq_iso2}) and ends the proof.

\end{proof}

As the components of $\Omega \setminus K$ have a boundary which is locally Ahlfors-regular (Proposition \ref{propV_AF}), which satisfies locally the condition B (Proposition \ref{propV_conditionB}) and which supports a local relative isoperimetric inequality (Proposition \ref{prop_isometry}), we can deduce the following Lemma. This is a minor variation of the proof of \cite[Theorem 6.1]{DS98} and we omit the details. 

\begin{lem}\label{lem_john_local}
    There exists a constant $C_0 \geq 1$ which depends only on $M$ such that the following holds true.
    Let $K$ be a $M$-quasiminimal set in $\Omega$ and assume that $N = 2$ or $3$.
    For all connected component $V$ of $\Omega \setminus K$, for all $z \in V$ and for all $0 < r \leq \mathrm{dist}(z,K)$ such that $B(z,C_0 r) \subset \Omega$, there exists a path $\gamma \subset V$ such that
    \begin{enumerate}[label = (\roman*)]
        \item $\gamma \subset V \cap B(z,C_0 r)$,
        \item $\mathrm{dist}(\gamma,K) \geq C_0^{-1} r$,
        \item $\gamma$ goes from $z$ to a point $w \in V \cap B(z,C_0 r)$ such that $\mathrm{dist}(w,K) \geq 2r$.
    \end{enumerate}
\end{lem}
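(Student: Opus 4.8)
The plan is to handle first the easy case, then to locate the endpoint of the escape path with Condition B, and finally to join $z$ to that endpoint by the method of \cite[Theorem 6.1]{DS98}. First I would dispose of the case $\mathrm{dist}(z,K)\ge 2r$: here the constant path at $z$ together with $w:=z$ already satisfies (i)--(iii). So from now on assume $r\le\mathrm{dist}(z,K)<2r$. A preliminary remark to record is that, since $\mathrm{dist}(z,K)\ge r$ and $B(z,C_0r)\subset\Omega$, the ball $B(z,r)$ is a connected subset of $\Omega\setminus K$ meeting $V$, hence $B(z,r)\subset V$; thus $z$ is the centre of an $r$-ball inside $V$, which will serve as the starting segment of the path.

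Next I would produce the target point $w$. Fix $x_0\in K$ with $\mathrm{dist}(z,K)=|z-x_0|<2r$ and a large constant $\lambda=\lambda(M)$ to be chosen. Applying the Condition B estimate for boundaries (Proposition~\ref{propV_conditionB}) in $B(x_0,\lambda r)$ — which is legitimate as $V$ meets $B(x_0,\lambda r/2)$ once $\lambda\ge 4$, and as $B(x_0,\lambda r)\subset B(z,(\lambda+2)r)\subset\Omega$ provided $C_0\ge\lambda+2$ — produces a ball $B(w,\rho)\subset V\cap B(x_0,\lambda r)$ with $\rho\ge C_1^{-1}\lambda r$. Choosing $\lambda$ so that $C_1^{-1}\lambda\ge 4$ gives $\rho\ge 4r$, whence $\mathrm{dist}(w,K)\ge\rho\ge 2r$ because $B(w,\rho)\subset V\subset\Omega\setminus K$; and $w\in B(x_0,\lambda r)\subset B(z,C_0r)$. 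So $w$ already fulfils (iii), and $B(w,\rho)$ is a definite-size piece of $V$ lying far from $K$ to which the path should terminate.

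The remaining and main task is to connect $z$ to $w$ by a path $\gamma\subset V\cap B(z,C_0r)$ with $\mathrm{dist}(\gamma,K)\ge C_0^{-1}r$. Here I would follow \cite[Theorem 6.1]{DS98}: the components of $\Omega\setminus K$ have a locally Ahlfors-regular boundary (Proposition~\ref{propV_AF}), satisfy Condition B locally (Proposition~\ref{propV_conditionB}), and support a local relative isoperimetric inequality (Proposition~\ref{prop_isometry}), and under exactly these hypotheses the David--Semmes argument builds a nontangential Harnack chain. Concretely, one fixes a small $\delta=\delta(M)$ and runs over the dyadic scales $s=2^{-j}r$ with $r\le s\le\lambda r$: at each scale Condition B selects a ball of radius $\gtrsim s$ inside $V\cap B(x_0,Cs)$, and the relative isoperimetric inequality — combined with the Ahlfors lower bound on $\HH^{N-1}(\partial V\cap B)$, which forbids thin fingers — shows that these balls all lie in one ``central'' sub-region of $V$ at distance $\gtrsim s$ from $K$; chaining these central regions between consecutive scales and attaching at the two ends the balls $B(z,r)\subset V$ and $B(w,\rho)\subset V$ found above yields a path $\gamma\subset V\cap B(z,Cr)$ with $\mathrm{dist}(\gamma,K)\ge cr$, the constants $C$ and $1/c$ depending only on $M$. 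Setting $C_0:=\max(C,1/c,\lambda+2)$ finishes the proof.

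The hard part is precisely this last step — transcribing \cite[Theorem 6.1]{DS98} to the present local setting — and within it the delicate point is that Proposition~\ref{prop_isometry} is a \emph{relative} isoperimetric inequality, so it only controls pieces of positive volume and must be run together with the Ahlfors lower bound on $\partial V$ (and, implicitly, the local finiteness of the partition, Theorem~\ref{thm_finiteness}) to prevent $V$ from degenerating into an arbitrarily thin tube between the scales on which the chain is built. Since the three input properties are already in hand and the adaptation is purely local and quantitative, I would, as in \cite{DS98}, omit the routine details.
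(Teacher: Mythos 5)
Your proposal is correct and takes essentially the same route as the paper: the paper, too, proves Lemma~\ref{lem_john_local} simply by combining Propositions~\ref{propV_AF}, \ref{propV_conditionB} and \ref{prop_isometry} and deferring to the proof of \cite[Theorem 6.1]{DS98} as a minor local variation, with the details omitted. Your additional scaffolding (disposing of the case $\mathrm{dist}(z,K)\geq 2r$ and locating the endpoint $w$ via the local Condition B) is consistent with that argument and with the stated dependence of the constants on $M$.
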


Then it is standard to deduce Theorem~\ref{thm_john} from Lemma \ref{lem_john_local}. We omit the proof again and refer to \cite[Proposition 56.7]{DavidBOOK} or \cite[Lemma 20.1]{BD} for details.

\subsection{A sufficient condition for quasiminimality}\label{section_sufficient}

We finally show that if $K$ is an Ahlfors-regular set which partitions a domain into local John domains with uniform constants, then $K$ satisfies a local quasiminimality property. The statement holds in all dimensions $N \geq 2$.

\begin{thm}\label{thm_char}
    Let $\Omega$ be an open subset of $\R^N$ and let $K$ be a relatively closed subset of $\Omega$. We assume there exists a constant $C_0 \geq 2$ such that the following holds true.
	\begin{enumerate}[label = (\roman*)]
            \item For all $x_0 \in K$ and $r > 0$ such that $B(x_0,r) \subset \Omega$,
		\begin{equation*}    
			    C_0^{-1} r^{N-1} \leq \HH^{N-1}(K \cap B(x_0,r)) \leq C_0 r^{N-1};
		\end{equation*}    
            \item for all $x_0 \in K$ and $r > 0$ such that $B(x_0,r) \subset \Omega$, the ball $B(x_0,r)$ meet at least two components of $\Omega \setminus K$;
            \item for all $x_0 \in K$ and $r > 0$ such that $B(x_0,2r) \subset \Omega$ for all component $V$ of $\Omega \setminus K$, the domain
                \begin{equation*}
                    \bigr(V \cap B(x_0,2r)\bigl) \cup (B(x_0,2r) \setminus \overline{B}(x_0,r))
                \end{equation*}
                is a John domain with constant $C_0$.
	\end{enumerate}
	Then for all $x \in K$, for all $r > 0$ such that $B(x_0,2r) \subset \Omega$ and for all topological competitor $F$ of $K$ in $B(x_0,r)$,
        \begin{equation*}
	    \HH^{N-1}(K\setminus F) \leq M \HH^{N-1}(F\setminus K),
        \end{equation*}
        where $M \geq 1$ is a constant which depends on $N$ and $C_0$.
\end{thm}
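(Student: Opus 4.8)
Fix $x_0 \in K$ and $r>0$ with $B(x_0,2r)\subset\Omega$, and let $F$ be a topological competitor of $K$ in $B(x_0,r)$. We may assume $\HH^{N-1}(F\setminus K)<\infty$, and even $\HH^{N-1}(F\setminus K)\le\varepsilon_0 r^{N-1}$ for a small constant $\varepsilon_0=\varepsilon_0(N,C_0)$ to be fixed: otherwise (i) gives $\HH^{N-1}(K\setminus F)\le\HH^{N-1}(K\cap B(x_0,r))\le C_0 r^{N-1}\le (C_0/\varepsilon_0)\HH^{N-1}(F\setminus K)$, as wanted. Since $F=K$ on $\Omega\setminus B(x_0,r)$, both $K\setminus F$ and $F\setminus K$ are contained in $B(x_0,r)$. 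As in the Remark following the definition of topological competitors, $F$ is also a topological competitor of $K$ in every ball $B(x_0,t)$ with $r<t<2r$; since $\HH^{N-1}(K\cap\partial B(x_0,t))=0$ for a.e.\ such $t$ (coarea), we fix one such $t$ and work in $B(x_0,t)$. Finally we will use, as structural facts that follow from (i)--(iii) exactly as in Section~\ref{section_standard} (condition B for $K$ follows from (ii) together with (iii), hence $K$ is uniformly rectifiable), that $K$ is rectifiable and that $\HH^{N-1}$-a.e.\ point of $K$ lies on $\partial^*V_i\cap\partial^*V_j$ for exactly two components $V_i\neq V_j$ of $\Omega\setminus K$.

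\textbf{Geometry of the components and a relative isoperimetric inequality.} Let $(V_i)_i$ be the components of $\Omega\setminus K$ that meet $B(x_0,r)$. For each such $i$, condition (iii) makes $D_i:=(V_i\cap B(x_0,2r))\cup(B(x_0,2r)\setminus\overline B(x_0,r))$ a John domain; since a John domain is connected, $V_i$ must meet the annulus $B(x_0,2r)\setminus\overline B(x_0,r)$, so $V_i\setminus\overline B(x_0,r)\neq\emptyset$. By Bojarski's theorem, $D_i$ is a domain of isoperimetry with a constant $C_1=C_1(N,C_0)$. Since any open $Z\subset V_i\cap B(x_0,r)$ satisfies $\abs Z\le\abs{B(x_0,r)}\le\abs{D_i\setminus Z}$ (because $D_i$ contains the annulus $B(x_0,2r)\setminus\overline B(x_0,r)$ of volume $(2^N-1)\abs{B(x_0,r)}$) and $\overline Z\subset\overline B(x_0,r)$, one gets $\abs Z^{(N-1)/N}\le C_1\HH^{N-1}(\partial Z\cap V_i)$.

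\textbf{The trapped sets.} For each $i$, define $Z_i$ to be the set of points of $V_i\cap B(x_0,r)$ that cannot be joined, by a path inside $V_i\setminus F$, to any point of $V_i\setminus\overline B(x_0,r)$; up to the $\HH^{N-1}$-finite (hence Lebesgue-null) set $(F\setminus K)\cap V_i$, this is an open set, and $Z_i\subset V_i\cap\overline B(x_0,r)$. If $y\in\partial Z_i\cap V_i$ and $y\notin F$, a small ball around $y$ lies in the open connected set $V_i\setminus F$ and hence meets $Z_i$ either in a neighbourhood of $y$ or not at all, contradicting $y\in\partial Z_i$; therefore $\partial Z_i\cap V_i\subset F$, i.e.\ $\partial Z_i\cap V_i\subset (F\setminus K)\cap V_i$. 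In particular, since $F\cap\partial B(x_0,r)=K\cap\partial B(x_0,r)$, no point of $\partial Z_i$ on $\partial B(x_0,r)$ lies in $V_i$, so $\partial^*Z_i\subset\big[(F\setminus K)\cap V_i\big]\cup\big[K\cap\overline B(x_0,r)\big]$. Applying the inequality of the previous paragraph to $Z=Z_i$ and summing over the disjoint $V_i$,
\begin{equation*}
    \sum_i\abs{Z_i}^{(N-1)/N}\le C_1\sum_i\HH^{N-1}\big((F\setminus K)\cap V_i\big)\le C_1\HH^{N-1}(F\setminus K)\le C_1\varepsilon_0 r^{N-1},
\end{equation*}
so each $\abs{Z_i}$ is small compared to $r^N$.

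\textbf{Covering the removed part of $K$, and conclusion.} Let $x$ be a point of $K\cap B(x_0,r)\setminus F$ at which $K$ has an approximate tangent plane and $x\in\partial^*V_i\cap\partial^*V_j$ for two components ($\HH^{N-1}$-a.e.\ such point). Then a ball $B(x,\delta_0)$ avoids the closed set $F$; it is connected, so the pieces of $V_i$ and $V_j$ near $x$ lie in the same component $O$ of $\Omega\setminus F$. If $O$ met both $V_i\setminus\overline B(x_0,r)$ and $V_j\setminus\overline B(x_0,r)$, then a point of $V_i$ and a point of $V_j$ outside $\overline B(x_0,r)$, which are separated by $K$, would be joined in $\Omega\setminus F$, contradicting the topological competitor property. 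Hence $O$ fails to reach the exterior through (at least) one of $V_i,V_j$, say $V_i$, so the $V_i$-side of $x$ lies in $Z_i$; since $Z_i$ fills a full half-neighbourhood of $x$ (density argument as above) and $x\notin Z_i$, we get $x\in\partial^*Z_i$. Thus $\HH^{N-1}(K\cap B(x_0,r)\setminus F)\le\sum_i\HH^{N-1}\big(\partial^*Z_i\cap K\setminus F\big)$. It remains to prove
\begin{equation*}
    \sum_i\HH^{N-1}\big(\partial^*Z_i\cap K\setminus F\big)\le C\,\HH^{N-1}(F\setminus K),
\end{equation*}
which I expect to be the main obstacle. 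The point is that each trapped region $Z_i$ lies ``between'' the removed part of $K$ it borders (a subset of $\partial^*Z_i\cap K\setminus F$, near which $Z_i$ fills the whole $V_i$-side) and the interior cut $\partial^*Z_i\cap V_i\subset F\setminus K$ that seals it off; pushing a.e.\ point of $\partial^*Z_i\cap K\setminus F$ along its inward normal until the ray exits $Z_i$ produces a controlled-multiplicity map into $\partial^*Z_i\cap V_i$, using the uniform rectifiability of $K$ (so that the normals are coherent at most scales) and the smallness of $\abs{Z_i}$ to absorb error terms. As in Lemma~\ref{lem_infiltration} and Proposition~\ref{prop_isometry}, this is most cleanly carried out by a Vitali covering of $\partial^*Z_i\cap K\setminus F$ by balls in which $K$ and the relevant boundaries are nearly flat, replacing $Z_i$ near such a ball by a thin slab bounded below by the removed $K$ and sealing it with a piece of $\partial B$ of comparable area; quasiminimality is not invoked here, only the geometry, so one obtains the inequality above with $C=C(N,C_0)$. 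Combining the last three displayed estimates gives $\HH^{N-1}(K\setminus F)=\HH^{N-1}(K\cap B(x_0,r)\setminus F)\le M\,\HH^{N-1}(F\setminus K)$ with $M=M(N,C_0)$, which is the claim.
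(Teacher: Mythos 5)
Your overall architecture is sound and in fact mirrors the paper's: your trapped sets $Z_i$ are essentially the complements (inside each $V_i$) of the sets the paper calls $W_i$ (the union of components of $\Omega\setminus F$ that reach outside $B(x_0,r)$ and sit in $V_i$), and the estimate you defer, $\sum_i\HH^{N-1}(\partial^*Z_i\cap K\setminus F)\le C\,\HH^{N-1}(F\setminus K)$, is precisely the crux of the theorem. But that step is left as a heuristic, and the mechanism you sketch cannot work as described: you say ``quasiminimality is not invoked here, only the geometry,'' yet the inequality you need is \emph{false} for general domains with Ahlfors-regular boundary --- a component with a narrow neck or generalized cusp can have a trapped region whose $K$-boundary has large measure while the cut $\partial Z_i\cap V_i$ sealing it off is tiny. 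So the John hypothesis (iii) must enter exactly at this point, and ``pushing along inward normals with controlled multiplicity'' or a local Vitali covering gives no handle on it: in a neck geometry all those normal rays would have to exit through the small cut, and nothing local bounds the multiplicity. Note also that Bojarski's theorem, which you do invoke, only yields the isoperimetric control $\abs{Z_i}^{(N-1)/N}\le C\,\HH^{N-1}(\partial Z_i\cap V_i)$, i.e.\ control of the \emph{volume} of $Z_i$, not of the surface measure $\HH^{N-1}(\partial Z_i\cap\partial V_i)$; the paper itself points out that domains of isoperimetry are strictly weaker than John domains. The paper closes this gap with Lemma \ref{lem_char} (an application of \cite[Lemma 7.12]{DS98}, as in \cite[Lemma 7.46]{DS98}): applied to the truncated John domains $\hat V_i=(V_i\cap B(x_0,2r))\cup(B(x_0,2r)\setminus\overline B(x_0,r))$ and the open sets $\hat W_i$, it gives $\HH^{N-1}(B(x_0,r)\cap\partial V_i\setminus\overline{W_i})\le C\,\HH^{N-1}(B(x_0,r)\cap V_i\cap\partial W_i)$, which is exactly the missing bound (with $V_i\cap\partial W_i\subset F\setminus K$). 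Without an argument of this type, your proof is incomplete at its decisive step.

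A secondary issue: you assert that ``$\HH^{N-1}$-a.e.\ point of $K$ lies on $\partial^*V_i\cap\partial^*V_j$ for exactly two components'' follows from (i)--(iii) ``exactly as in Section \ref{section_standard}.'' It does not: the paper's proof of that fact (Lemma \ref{lem_infiltration} and Proposition \ref{prop_generic_points}) constructs topological competitors and applies quasiminimality, which is the very conclusion you are proving, so the justification as stated is circular. The claim can be repaired without quasiminimality --- (iii) gives a corkscrew ball in every component meeting a small ball centered on $K$, so by (ii) no point of $K$ is a density-one point of any $V_i$, and the Caccioppoli-partition structure theorem \cite[Theorem 4.17]{AFP} then gives the interface description --- but you would need to say this. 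The paper sidesteps the issue entirely by using only topological boundaries: every $x\in K$ lies in $\partial V_i\cap\partial V_j$ for two distinct indices (from (ii) and local finiteness of the components), which suffices for its counting argument and avoids any reduced-boundary structure.
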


The proof of Theorem \ref{thm_char} relies on the following Lemma. 
\begin{lem}\label{lem_char}
    Let $C_0 \geq 1$ be a constant and let $B_0$ be a ball of radius $\geq C_0^{-1}$ such that $2 B_0 \subset B(0,1)$.
    Let $V \subset B(0,1)$ be a John domain containing $B_0$ with an Ahlfors-regular boundary. Then for all open set $W \subset \R^N$ containing $B_0$, we have
    \begin{equation*}
        \HH^{N-1}(\partial V \setminus \overline{W}) \leq C \HH^{N-1}(V \cap \partial W),
    \end{equation*}
    for some constant $C \geq 1$ which depends on $N$, $C_0$, and the John and Ahlfors-regularity constants of $V$.
\end{lem}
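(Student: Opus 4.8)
The goal is to bound $\HH^{N-1}(S)$ by a constant multiple of $\HH^{N-1}(T)$, where $S:=\partial V\setminus\overline W$ and $T:=V\cap\partial W$; if $\HH^{N-1}(T)=+\infty$ there is nothing to prove, so we may assume it finite. Below, $c$ and $C$ denote positive constants depending only on $N$, $C_0$, and the John and Ahlfors-regularity constants of $V$. The plan is first to dispose of an easy case: since $\partial V$ is Ahlfors-regular and $V\subset B(0,1)$, one has $\HH^{N-1}(\partial V)\le C$, so if $\HH^{N-1}(T)\ge\varepsilon_1$ for a threshold $\varepsilon_1$ (to be fixed later, depending on the data) then $\HH^{N-1}(S)\le\HH^{N-1}(\partial V)\le C\le (C/\varepsilon_1)\HH^{N-1}(T)$. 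Hence from now on $\HH^{N-1}(T)<\varepsilon_1$.

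The main case is attacked by a covering argument. For $x\in S$ put $d(x):=\mathrm{dist}(x,\overline W)>0$ and $\rho(x):=c_1\min(d(x),1)$, where $c_1$ is a small constant depending on the data; note $\overline{B(x,2\rho(x))}\cap\overline W=\emptyset$. By the Vitali covering lemma I would extract a disjoint subfamily $\{B(x_j,\rho_j)\}_j$, $\rho_j:=\rho(x_j)$, with $S\subset\bigcup_j B(x_j,5\rho_j)$; Ahlfors-regularity of $\partial V$ applied at the centres $x_j\in\partial V$ then gives
\begin{equation*}
\HH^{N-1}(S)\le\sum_j\HH^{N-1}\bigl(\partial V\cap B(x_j,5\rho_j)\bigr)\le C\sum_j\rho_j^{\,N-1}.
\end{equation*}
For each $j$ I would produce an interior corkscrew ball using the John condition: the centre $z_0$ of $V$ satisfies $\mathrm{dist}(z_0,\R^N\setminus V)\ge c$ (run a John path from a point of $B_0$ and use $\mathrm{radius}(B_0)\ge C_0^{-1}$), and following the John path from a point of $V$ within $\tfrac14\rho_j$ of $x_j$ toward $z_0$, then stopping after arclength of order $\rho_j$, yields $z_j\in V\cap B(x_j,\tfrac12\rho_j)$ with $B(z_j,c\rho_j)\subset V$. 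Since $B(x_j,\rho_j)\cap\overline W=\emptyset$, in fact $B(z_j,c\rho_j)\subset U:=V\setminus\overline W$, and these balls are pairwise disjoint.

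Next I would pass from volume to boundary area contained in $T$. Let $O_j$ be the connected component of the open set $U$ containing $B(z_j,c\rho_j)$. Because $W$ is open, $\partial U\cap V\subset\partial W\cap V=T$, hence $\partial O_j\cap V\subset T$; moreover $B_0\cap O_j=\emptyset$ (as $B_0\subset W$), so $|V\setminus O_j|\ge|B_0|\ge cC_0^{-N}$, while $|O_j|\ge|B(z_j,c\rho_j)|\ge c\rho_j^{\,N}$. A John domain is a domain of isoperimetry (Bojarski \cite{Bojarski}); applying this to $O_j\subset V$ gives $\min(|O_j|,|V\setminus O_j|)^{(N-1)/N}\le C\,\HH^{N-1}(\partial^*O_j\cap V)\le C\,\HH^{N-1}(T)<C\varepsilon_1$, so if $\varepsilon_1$ is small enough that $(C\varepsilon_1)^{N/(N-1)}<cC_0^{-N}$ then necessarily $|O_j|\le(C\varepsilon_1)^{N/(N-1)}$ (the minimum cannot be realised by $|V\setminus O_j|$), and in particular the minimum is $|O_j|$. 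Therefore
\begin{equation*}
\rho_j^{\,N-1}\le C\,|O_j|^{(N-1)/N}\le C\,\HH^{N-1}\bigl(\partial^*O_j\cap V\bigr).
\end{equation*}

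The remaining step is to sum these estimates over $j$, and I expect this to be the main difficulty. The favourable fact is that the sets $\partial^*O_\alpha\cap V$, as $O_\alpha$ ranges over the components of $U$, are pairwise $\HH^{N-1}$-disjoint up to null sets and their union is $\partial^*U\cap V\subset T$, so $\sum_\alpha\HH^{N-1}(\partial^*O_\alpha\cap V)\le\HH^{N-1}(T)$. The obstacle is that several of the disjoint corkscrew balls $B(z_j,c\rho_j)$ may lie in one component $O_\alpha$, so the per-$j$ inequalities cannot simply be added (Bojarski applied to $O_\alpha$ only yields $\bigl(\sum_{j:\,O_j=O_\alpha}\rho_j^{\,N}\bigr)^{(N-1)/N}\le C\,\HH^{N-1}(\partial^*O_\alpha\cap V)$, which is weaker than what is needed). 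I would resolve this by grouping the indices by the dyadic size of $\rho_j$ and observing that, within a fixed component and at a fixed dyadic scale, the corkscrew balls are disjoint, of comparable radius, and lie in the layer $\{\,y:\mathrm{dist}(y,\partial V)\approx\rho_j\,\}$, so that their number is controlled by $|O_\alpha|$; combined with a local (scale-invariant) form of the relative isoperimetric inequality for $V$ — which holds for an arbitrary John domain with Ahlfors-regular boundary by the methods of \cite{DS98} (see Proposition~\ref{prop_isometry} for the analogous statement in our setting) — this lets one charge each $\rho_j^{\,N-1}$ to a portion of $\partial^*O_\alpha\cap V$ with bounded multiplicity, yielding $\sum_{j:\,O_j=O_\alpha}\rho_j^{\,N-1}\le C\,\HH^{N-1}(\partial^*O_\alpha\cap V)$. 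The smallness of all the $|O_\alpha|$, guaranteed by the reduction to $\HH^{N-1}(T)<\varepsilon_1$, is what makes this local bound uniform. Summing over $\alpha$ and combining with the covering estimate gives $\HH^{N-1}(S)\le C\sum_j\rho_j^{\,N-1}\le C\,\HH^{N-1}(T)$, which is the claim.
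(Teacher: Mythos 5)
Your overall architecture is reasonable and the first two thirds are essentially correct: the reduction to the case $\HH^{N-1}(T)<\varepsilon_1$ via $\HH^{N-1}(\partial V)\le C$, the Vitali covering of $S=\partial V\setminus\overline W$ at the scale $\mathrm{dist}(\cdot,\overline W)$ with $\sum_j\rho_j^{N-1}\approx\HH^{N-1}(S)$, the corkscrew points $z_j$ obtained from the John paths, the inclusion $B(z_j,c\rho_j)\subset O_j\subset V\setminus\overline W$, and the use of Bojarski together with $B_0\subset W\cap V$ to force $\min(|O_j|,|V\setminus O_j|)=|O_j|$ and hence $\rho_j^{N-1}\le C\,\HH^{N-1}(\partial O_j\cap V)$, are all fine. (For comparison, the paper does none of this: its proof is a direct citation of \cite[Lemma 7.12]{DS98}, used as in \cite[Lemma 7.46]{DS98}.) The genuine gap is exactly the step you flag as ``the main difficulty'', namely the per-component bound $\sum_{j:\,O_j=O_\alpha}\rho_j^{N-1}\le C\,\HH^{N-1}(\partial O_\alpha\cap V)$: this is the real content of the lemma, and the sketch you give does not prove it.

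Concretely: (a) the dyadic counting you propose gives, at scale $\rho_j\approx 2^{-k}$, at most $C\,|O_\alpha|\,2^{kN}$ balls in $O_\alpha$, hence $\sum_{\text{scale }k}\rho_j^{N-1}\le C\,|O_\alpha|\,2^{k}$, which blows up over small scales; even combined with $|O_\alpha|\le C\,\HH^{N-1}(\partial O_\alpha\cap V)^{N/(N-1)}$ it only closes if all $\rho_j\gtrsim\HH^{N-1}(\partial O_\alpha\cap V)^{1/(N-1)}$, which is not known. (b) The ``local scale-invariant relative isoperimetric inequality'' you invoke is not available here: Proposition \ref{prop_isometry} is proved for quasiminimal sets \emph{using} quasiminimality, which is exactly the conclusion Theorem \ref{thm_char} (through this lemma) is trying to reach, so appealing to it is circular; and for a general John domain, localizing the isoperimetric inequality in a ball $B(x_j,K\rho_j)$ produces a spherical boundary term $\HH^{N-1}(\partial B(x_j,K\rho_j)\cap O_\alpha)\lesssim\rho_j^{N-1}$, of the same order as the quantity you are trying to bound, so a naive charging does not close either. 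Note moreover that, since the disjoint balls $B(x_j,\rho_j)$ satisfy $\HH^{N-1}(\partial V\cap B(x_j,\rho_j))\approx\rho_j^{N-1}$ with $\partial V\cap B(x_j,\rho_j)\subset S$, your per-component claim is equivalent to a localized form of the lemma itself, with blocking set $\partial O_\alpha\cap V\subset T$; in other words it is essentially the statement of \cite[Lemma 7.12]{DS98} (a set meeting every John path from a portion of $\partial V$ to the center has comparable $\HH^{N-1}$-measure), which is precisely what the paper cites. To complete your proof you would have to actually establish such a blocking estimate — for instance by following each John path beyond $z_j$ until it first hits $\partial O_\alpha\cap V$ and carrying out a genuine bounded-overlap charging of $\rho_j^{N-1}$ onto $T$ near that hitting point — and that argument is not present; as written the proof is incomplete at its decisive step.
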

\begin{proof}[Proof of Lemma~\ref{lem_char}]
    This is an application of \cite[Lemma 7.12]{DS98}, as done in the proof of \cite[Lemma 7.46]{DS98}.
\end{proof}

\begin{proof}[Proof of Theorem~\ref{thm_char}]
We let $(V_i)_i$ denote the connected components of $\Omega \setminus K$, the sequence being finite or infinite.
We make first a few observations. From property iii), one can see that for all $x_0 \in K$ and $r > 0$ such that $B(x_0,2r) \subset \Omega$, there can only be a finite number of indices $i$ such that $V \cap B(x_0,r/4) \ne \emptyset$ (and this number depends on $N$ and the John constant $C_0$). Thus the family $(V_i)_i$ is locally finite in $\Omega$. Next, let us also observe that for all $x \in K$, there exists at least two indices $i \ne j$ such that $x \in \partial V_i \cap \partial V_j$. This follows from the fact that for all small $r > 0$, $B(x,r)$ meets at least two components of $\Omega \setminus K$ and that the family $(V_i)_i$ is locally finite.

We now fix a point $x_0 \in K$ and a radius $r > 0$ such that $B(x_0,2r) \subset \Omega$, we let $F$ be a topological competitor of $K$ in $B(x_0,r)$. 
    The first step consists in building a family of disjoint open sets $(W_i)_i$ such that
    \begin{enumerate}[label = (\roman*)]
        \item for all $i$, $W_i \setminus B(x_0,r) = V_i \setminus B(x_0,r)$;
        \item for all $i$, $\Omega \cap \partial W_i \subset F$.
    \end{enumerate}
 For this purpose, we set $W_i$ as the union of all components $W$ of $\Omega \setminus F$ such that $W \setminus B(x_0,r) \ne \emptyset$ and $W \setminus B(x_0,r) \subset V_i$. The important point is that for all component $W$ of $\Omega \setminus F$ such that $W \setminus B(x_0,r) \ne \emptyset$, there exists a unique component $V$ of $\Omega \setminus K$ such that $W \setminus B(x_0,r) \subset V$ (by definition of a topological competitor). From there it is easy to check the two properties above.


    Next, we let $m$ denote the number of indices such that such that $V_i \cap B(x_0,r) \ne \emptyset$ (the number $m$ depends on $N$ and $C_0$) and we re-order the sequence $(V_i)_i$ so that $V i \cap B(x_0,r) = \emptyset$ for $i > m$.
    We set for $i = 1,\ldots,m$,
    \begin{align*}
        \hat{V}_i &= \bigl(V_i \cap B(x_0,2r)\bigr) \cup \bigl(B(x_0,2r) \setminus \overline{B}(x_0,r)\bigr)\\
        \hat{W}_i &= \bigl(W_i \cap B(x_0,2r)\bigr) \cup \bigl(B(x_0,2r) \setminus \overline{B}(x_0,r)\bigr)
    \end{align*}
    and we apply Lemma \ref{lem_char} to $\hat{V}_i$ and $\hat{W}_i$ (in particular because they contain any ball of radius $\geq r$ in the ring $B(x_0,2r) \setminus \overline{B}(x,r)$). This yields
    \begin{equation*}
        \HH^{N-1}(B(x_0,r) \cap \partial V_i \setminus \overline{W_i}) \leq C \HH^{N-1}(B(x_0,r) \cap V_i \cap \partial W_i).
    \end{equation*}
    For all $x \in K \setminus F \subset B(x_0,r)$, there exists two different indices $i, j \in \set{1,\ldots,m}$ such that $x \in \partial V_i \cap \partial V_j$. As the family $(W_i)_i$ is disjoint, we have either $x \notin W_i$ or $x \notin W_j$. Since $x \notin F$ and $\Omega \cap \partial W_i \subset F$ (resp. $W_j$), we can deduce that either $x \in \partial V_i \setminus \overline{W_i}$ or $x \in \partial V_j \setminus \overline{W_j}$, whence
    \begin{equation*}
        \HH^{N-1}(K \setminus F) \leq \sum_{i=1}^m \HH^{N-1}(B(x_0,r) \cap \partial V_i \setminus \overline{W_i}).
    \end{equation*}
    On the other hand, we observe that $B(x_0,r) \cap V_i \cap \partial W_i \subset F \setminus K$ whence
    \begin{equation*}
        \sum_{i=1}^m \HH^{N-1}(B(x_0,r) \cap V_i \cap \partial W_i) \leq m \HH^{N-1}(F \setminus K).
    \end{equation*}
\end{proof}

\section{The dimension of junction points}\label{section_dimension}

It is well-known that a minimal set $K$ is regular out a relatively closed subset of dimension $\leq N - 2$ (see for instance \cite[Theorem 4.3]{AFH}). Could there be an analogue property for quasiminimal sets ? A first try is to define regular points as the set of points $x \in K$ such $\lim_{r \to 0} \beta_K(x,r) = 0$. But such a set may not have have a dimension $< N - 1$ in general as we can see on certain Lipschitz graphs. Instead, we shall consider the points $x \in K^*$ as ``regular points'' of quasiminimal sets.
\begin{prop}\label{Jdn}
    Let $K$ be a $M$-quasiminimal set in $\Omega$ and assume that $N = 2$. Then the points of $K \setminus K^*$ are isolated in $\Omega$.
\end{prop}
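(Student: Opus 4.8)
The plan is to first turn the statement into a workable form, then establish a strong porosity property of the set of non‑interface points, and finally close by a compactness/blow‑up argument.

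\medskip
\noindent\textbf{Step 1 — Reformulation.} For $x\in K$ and $r>0$ with $B(x,r)\subset\subset\Omega$, let $N(x,r)$ be the number of connected components of $\Omega\setminus K$ meeting $B(x,r)$. Since $B(x,r')\subset B(x,r)$ for $r'\le r$, the map $r\mapsto N(x,r)$ is non‑decreasing, and by local finiteness in the plane (Corollary \ref{cor_localfiniteness}) it is bounded by a constant depending only on $M$; hence it is eventually constant: there are $r_0(x)>0$ and $n_0(x)\in\{2,\dots,C(M)\}$ with $N(x,r)=n_0(x)$ for all $r\le r_0(x)$. By Lemma \ref{lem_separation} one has $n_0(x)\ge 2$, and the remark after the definition of $K^*$ gives $x\in K^*$ iff $n_0(x)=2$. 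Thus $K\setminus K^*=\{x\in K:\ N(x,r)\ge 3\ \text{for all }r\text{ with }B(x,r)\subset\subset\Omega\}$. Moreover, since the components met by $B(x,r)$ form a non‑decreasing family of sets of constant cardinality for $r\le r_0(x)$, the $n_0(x)$ components meeting $B(x,r_0(x))$ meet every $B(x,s)$ with $s\le r_0(x)$; by Proposition \ref{prop_isolated} none of them is contained in $B(x,2r_0(x))$ (shrink $r_0$ so that $B(x,2r_0(x))\subset\subset\Omega$), so by connectedness each meets $\partial B(x,\rho)$ for every $\rho\in(0,2r_0(x))$. Finally, contraposing Proposition \ref{prop_plane_separation} yields $\beta_K(x,r)>\varepsilon_0$ for every $x\in K\setminus K^*$ and every $r$ with $B(x,r)\subset\subset\Omega$, i.e.\ junction points are uniformly non‑flat at all scales.

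\medskip
\noindent\textbf{Step 2 — Porosity of $K\setminus K^*$.} Fix $x_0\in K\setminus K^*$ and $r$ with $B(x_0,2r)\subset\subset\Omega$. Applying Corollary \ref{cor_fl} with $\varepsilon:=\varepsilon_0$ inside $B(x_0,r)$ produces $y\in K\cap B(x_0,r/2)$ and $t\in(C^{-1}r,r/2)$ with $\beta_K(y,t)\le\varepsilon_0$. By Step 1, $y\ne x_0$, and by Proposition \ref{prop_plane_separation}, $B(y,t/2)$ meets exactly two components of $\Omega\setminus K$; hence every ball contained in $B(y,t/2)$ meets at most two components, and therefore $B(y,t/4)\cap(K\setminus K^*)=\varnothing$. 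In words: every ball centred on $K\setminus K^*$ contains a junction‑free ball of comparable radius. (This already gives $\dim_{\HH}(K\setminus K^*)\le N-c(M)$, but in the plane we want the stronger conclusion that it is discrete.)

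\medskip
\noindent\textbf{Step 3 — Conclusion (the main obstacle).} Suppose for contradiction that some $x_0\in K\setminus K^*$ is not isolated in $K\setminus K^*$, and pick $y_j\in K\setminus K^*$ with $0<d_j:=|y_j-x_0|\to 0$. Rescaling preserves $M$‑quasiminimality with the same constant, so the sets $K_j:=d_j^{-1}(K-x_0)$ are $M$‑quasiminimal with uniform Ahlfors constants (Proposition \ref{prop_af}); by the standard compactness for quasiminimal sets, along a subsequence $K_j\to K_\infty$ locally in Hausdorff distance with $K_\infty$ an $M$‑quasiminimal set in $\R^2$ and $\HH^1\mres K_j\to\HH^1\mres K_\infty$ weakly, while $\hat y_j:=d_j^{-1}(y_j-x_0)\to\hat y$ with $|\hat y|=1$ (further subsequence). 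Using Condition B (Proposition \ref{prop_conditionB}) one checks lower semicontinuity of the component count: any component of $B(z,\rho)\setminus K_\infty$ contains a ball of radius $\ge C^{-1}\rho$, which persists for the nearby $K_j$ and stays separated because $K_\infty$ (hence, for large $j$, $K_j$) has positive lower density along $K_\infty$. Thus $0$ and $\hat y$ are both points of $K_\infty\setminus K_\infty^*$, at distance $1$. The hard part, which I expect to be the crux of the proof, is to rule out this limit configuration: iterating the construction — blowing up $K_\infty$ at $0$ along the scales $|\hat y_j|$, and keeping track of where the earlier rescaled junction points go — one should produce an $M$‑quasiminimal set whose set of non‑interface points is dense in a set of positive $\HH^1$ measure, contradicting $\HH^1(K_\infty\setminus K_\infty^*)=0$ (Proposition \ref{prop_generic_points}); the uniform porosity obtained in Step 2, together with the fact that interface balls carry exactly two components, is what forbids such density and must be exploited to carry the iteration through. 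The genuinely delicate point is that the rescaled junction points may collapse to $0$ in the limit, so the scales along which one blows up, and the subsequences extracted, have to be chosen with care.
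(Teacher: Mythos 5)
Your Steps 1 and 2 are fine (they essentially reproduce Proposition \ref{prop_plane_separation}, Corollary \ref{cor_fl} and the porosity argument that the paper later uses for the dimension bound), but they only give $\dim(K\setminus K^*)<1$, and the whole weight of the statement rests on your Step 3, which is not a proof: you acknowledge yourself that the crux --- ruling out the limit configuration --- is left open. Beyond being incomplete, the sketched strategy has concrete problems. First, the compactness and ``lower semicontinuity of the component count'' you invoke are not available as stated: Condition B gives two balls of definite size in two components, but at a junction point the third component has no uniform size, so it can degenerate along the blow-up and the junction property need not persist in the Hausdorff limit. Second, even granting persistence, the target configuration is not contradictory: an $M$-quasiminimal set in $\R^2$ can perfectly well have two junction points at distance $1$ (e.g.\ a circle together with a diameter, whose complement consists of two half-disks and the exterior), so arriving at ``$0$ and $\hat y$ are both non-interface points of $K_\infty$'' proves nothing. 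The proposed remedy --- iterating blow-ups to make junction points dense on a set of positive $\HH^1$-measure and contradict Proposition \ref{prop_generic_points} --- is exactly the step that would need a new idea, and it faces the same persistence/collapse issue you flag; nothing in the proposal shows it can be carried through.

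The paper avoids blow-ups entirely and uses planar topology. If $B(x_0,r/2)$ contained many points of $K\setminus K^*$, note that each component of $B(x_0,r)\setminus K$ meeting $B(x_0,r/2)$ has boundary length $\geq r/2$ in $B(x_0,r/2)$ (as in Lemma \ref{lem_AF_boundary2D}), so by Ahlfors regularity there are at most $m\leq 8\pi M$ such components; each non-interface point lies on the boundary of at least three of them, so with more than $\binom{m}{3}$ such points, two points $x,y$ share the same triple $W_1,W_2,W_3$. Joining $x$ to $y$ by paths $\gamma_i$ through each $W_i$ and applying the Jordan curve theorem, one of the components, say $W_3$, is trapped in the bounded region enclosed by $\gamma_1\cup\gamma_2$, hence $\overline{W_3}\subset B(x_0,r)$, contradicting Proposition \ref{prop_isolated}. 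This pigeonhole-plus-Jordan argument is the missing ingredient; without it (or a genuinely completed alternative to your Step 3), the proposal does not establish the proposition.
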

\begin{proof}
    Assume the contrary. There exists a point $x_0 \in K$ and a radius $r > 0$ that $B(x_0,r) \subset \Omega$ and $B(x_0,r/2)$ contains at least $L$ points of $K \setminus K^*$, where $L$ is a number that we are going to choose soon. Note that for each component $W$ of $B(x_0,r) \setminus K$ which intersects $B(x_0,r/2)$, we have $\HH^{1}(\partial W \cap B(x_0,r/2)) \geq r/2$ (similarly as in Lemma \ref{lem_AF_boundary2D}). As $\HH^{1}(K \cap B(x_0,r)) \leq 2 M \pi r$, we deduce that there can be at most $m \leq 8 \pi M$ such components. 
    We let $W_1, \ldots, W_m$ denote them. Choosing $L= \binom{m}{3}+1$, there exists three boundary, say $\partial W_1$, $\partial W_2$, $\partial W_3$ and two points $x,y \in K \cap B(x_0,r/2)$ such that $x,y \in \partial W_1 \cap \partial W_2 \cap \partial W_3$. For each $i$, there exists a path $\gamma_i$ connecting $x$ to $y$ in $W_i$. Due to Jordan curve theorem one of the paths, say $\gamma_3$, lay in the interior region bounded by two other paths. But since $W_3$ cannot intersects $\gamma_1 \cup \gamma_2$, this implies that $\overline{W_3} \subset B(x_0,r)$. A contradiction to Proposition \ref{prop_isolated}.

\end{proof}


\begin{rmk}
We see from the proof that if $B(x_0,r) \subset \Omega$ and $B(x_0,r) \setminus K$ has $k$ connected components, then the number of points of $K \setminus K^*$ in $B(x_0,r)$ cannot exceed $\binom{k}{3}$.
\end{rmk}

In higher dimension, we can only prove that $K \setminus K^*$ has a dimension $<N-1$ and we will see in Remark \ref{rmk_dimension} that this is optimal.
\begin{prop}
    Let $K$ be a $M$-quasiminimal set in $\Omega$. Then $\mathrm{dim}(K \setminus K^*) \leq N - 1 - \delta$, where $\delta > 0$ is a constant which only depends on $N$ and $M$.
\end{prop}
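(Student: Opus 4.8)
The plan is to prove that the \emph{bad set} $S := K \setminus K^*$ is \emph{porous with respect to $K$}, and then to invoke the classical principle that a porous subset of an $(N-1)$-Ahlfors-regular set has Hausdorff dimension quantitatively bounded away from $N-1$. First I would extract the porosity from the flatness results of Section~\ref{section_UR}. Fix the constant $\varepsilon_0 \in (0,1)$ of Proposition~\ref{prop_plane_separation}, apply Corollary~\ref{cor_fl} with $\varepsilon := \varepsilon_0/8$, and let $C_1 = C_1(N,M) \ge 1$ be the constant it returns (it depends only on $N,M$ since $\varepsilon$ is now fixed). Given $x \in S$ and $r > 0$ with $B(x,r) \subset \Omega$, Corollary~\ref{cor_fl} produces $y \in K \cap B(x,r/2)$ and $t \in (C_1^{-1}r, r/2)$ with $\beta_K(y,t) \le \varepsilon_0/8$. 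I then claim that every $z \in K \cap B(y,t/4)$ lies in $K^*$: translating the near-optimal hyperplane $P$ for $\beta_K(y,t)$ so that it passes through $z$ costs only $\mathrm{dist}(z,P) \le (\varepsilon_0/8)\,t$, while $K \cap B(z,t/4) \subset K \cap B(y,t)$, so $\beta_K(z,t/4) \le \varepsilon_0$; since in addition $B(z,t/4) \subset B(x,r) \subset \Omega$, Proposition~\ref{prop_plane_separation} forces $B(z,t/8)$ to meet exactly two components of $\Omega \setminus K$, i.e.\ $z \in K^*$. Hence $B(y,t/4)$ is a ball centred on $K$, contained in $B(x,r)$, of radius $\ge (4C_1)^{-1} r$, that misses $S$. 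This is exactly the statement that $S$ is $\kappa$-porous with respect to $K$, with $\kappa := (4C_1)^{-1}$ depending only on $N$ and $M$.

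To conclude, recall by Proposition~\ref{prop_af} that $K$ carries the $(N-1)$-Ahlfors-regular measure $\HH^{N-1}\mres K$ with constant depending only on $N$ and $M$. It is classical (cf.\ \cite{mattila}) that a $\kappa$-porous subset of a $d$-Ahlfors-regular metric space has Hausdorff dimension at most $d - \delta$, with $\delta > 0$ depending only on $\kappa$, $d$ and the regularity constant. One proves this by iterating the porosity: inside a fixed ball $B_0$ of $K$, cover $S \cap B_0$ at scale $\rho$ by balls of $K$ of radius $\sim \rho$, discard the definite proportion of them swallowed by the $S$-free holes of radius $\sim \kappa \rho$ (whose $\HH^{N-1}$-measure is, by Ahlfors-regularity, comparable to that of the surrounding ball), and deduce that the covering number of $S$ at scale $\kappa \rho$ is at most a fixed fraction $\theta < 1$ of the maximal one, which upon iteration gives the dimension gap. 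Applying this in balls well inside $\Omega$ and using that Hausdorff dimension is local and countably stable yields $\dim(K \setminus K^*) \le N-1-\delta$ with $\delta = \delta(N,M) > 0$.

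The porosity extraction of the first step is routine given Corollary~\ref{cor_fl} and Proposition~\ref{prop_plane_separation} — the only real input is that ``small flatness at scale $t$ near $x$'' propagates to ``interface point'' for \emph{every} point of $K$ in the sub-ball $B(y,t/4)$. The step requiring the most care, though it is standard, is the passage from porosity to the quantitative dimension bound below $N-1$: it is essential that $S$ be porous \emph{within $K$} rather than merely in $\R^N$ (the latter would only give $\dim S \le N - \delta$), so that it is the Ahlfors-regularity of $K$, and not just of the ambient space, that gets fed into the iteration and produces the extra unit of codimension.
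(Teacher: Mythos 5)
Your argument is correct and is essentially the paper's own proof: both use Corollary~\ref{cor_fl} to locate a flat ball $B(y,t)$ inside $B(x,r)$, propagate the small flatness to every point of $K$ in a definite sub-ball so that Proposition~\ref{prop_plane_separation} places that sub-ball inside $K^*$, and then conclude by the standard fact that a set porous within an $(N-1)$-Ahlfors-regular set has dimension at most $N-1-\delta$. The only difference is cosmetic: the paper cites \cite[Theorem 51.20]{DavidBOOK} for this last abstract step, whereas you sketch the usual covering iteration yourself.
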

\begin{proof}
    Corollary \ref{cor_fl} shows that for all $\varepsilon > 0$, there exists a constant $C = C(\varepsilon)$ (which depends on $N$, $M$ and $\varepsilon)$ such that the following holds. For all $x_0 \in K$ and $r > 0$ such that $B(x_0,r) \subset \Omega$, there exists $y \in K \cap B(x_0,r/2)$ and $t \in (C^{-1}r,r/2)$ such that $\beta_K(y,t) \leq \varepsilon$. It follows that for all $z \in K \cap B(y,t/2)$, $\beta_K(z,t/2) \leq 2\varepsilon$. Choosing $\varepsilon$ small enough as in Proposition \ref{prop_plane_separation}, all points $z \in K \cap B(y,t/2)$ are in $K^*$. The fact that $\mathrm{dim}(K \setminus K^*) \leq N - 1 - \delta$, where $\delta$ depends only on $N$ and $M$, is an abstract consequence of this property, as done in \cite[Theorem 51.20]{DavidBOOK} (note that in \cite{DavidBOOK}, $K^*$ plays the role of non regular points whereas it is the inverse for us).
\end{proof}

\begin{rmk}[Example]\label{rmk_dimension}
    Consider a bounded and connected open set $D \subset \R^2$ such that $\partial D$ has a Hausdorff dimension between $1$ and $2$ and each point $x \in \R^2 \setminus D$ admits escape paths to $+\infty$. We identify $\R^2$ to the hyperplane $\set{z = 0}$ in $\R^3$ and we let $K$ be union of $\set{z = 0}$ and the graph of $x \mapsto \mathrm{dist}(x,D)$. 
    The set $K$ is Ahlfors-regular and separates $\R^3$ in three components which admit escape paths to infinity, so $K$ is a quasiminimal set. In this case, we observe that $K \setminus K^* = \partial D$ has a dimension between $N-2$ and $N-1$, where $N = 3$.
\end{rmk}

\vspace{0.5em}

\begin{center}
    { \sc Acknowledgements}
\end{center}

\vspace{0.5em}

The authors are very grateful to Guy David for valuable discussions and insights. During the preparation of this paper, Camille Labourie was funded by the French National Research Agency (ANR) under grant ANR-21-CE40- 0013-01 (project GeMfaceT). Yana Teplitskaya was supported by the Simons Foundation grant 601941, GD.

\bibliographystyle{plain}
\bibliography{biblio_quasiminimalsets}
(C. Labourie) \textsc{Université de Lorraine, CNRS, IECL, F-54000 Nancy, France}\\
(Y. Teplitskaya) \textsc{Université Paris-Saclay, CNRS, Laboratoire de mathématiques d'Orsay, 91405, Orsay, France}
\end{document}